\newcommand{\cA}{{\mathcal A}}
\newcommand{\cD}{{\mathcal D}}
\newcommand{\cE}{{\mathcal E}}
\newcommand{\cH}{{\mathcal H}}
\newcommand{\cS}{{\mathcal S}}
\newcommand{\bA}{{\mathbb A}}
\newcommand{\bC}{{\mathbb C}}
\newcommand{\bF}{{\mathbb F}}
\newcommand{\bI}{{\mathbb I}}
\newcommand{\bJ}{{\mathbb J}}
\newcommand{\bN}{{\mathbb N}}
\newcommand{\bQ}{{\mathbb Q}}
\newcommand{\bR}{{\mathbb R}}
\newcommand{\bZ}{{\mathbb Z}}
\newcommand{\supp}{\mathop{\rm supp}\nolimits}
\numberwithin{equation}{section}
\newtheorem{Theorem}{Theorem}[section]
\newtheorem{Lemma}{Lemma}[section]
\newtheorem{Corollary}{Corollary}[section]
\newtheorem{Definition}{Definition}[section]
\newtheorem{Remark}{Remark}[section]
\newtheorem{Example}{Example}[section]
\newtheorem{Proposition}{Proposition}[section]
\author{A.~Yu.~Khrennikov}
\address{International Center for Mathematical Modelling in Physics
and Cognitive Sciences MSI, V\"axj\"o University, \ SE-351 95,
V\"axj\"o, \ Sweden.}
\email{andrei.khrennikov@msi.vxu.se}
\author{A.~V.~Kosyak}
\address{Institute of Mathematics, Ukrainian National Academy of Sciences, 3
Tereshchenkivs'ka, Kyiv, 01601, Ukraine.}
\email{kosyak02@gmail.com}
\author{V.~M.~Shelkovich}
\address{Department of Mathematics, St.-Petersburg State Architecture
and Civil Engineering University, \ 2 Krasnoarmeiskaya 4, 190005,
St. Petersburg, \ Russia.}
\email{shelkv@yahoo.com}
\title{Wavelet analysis on adeles and pseudo-differential operators}
\thanks{The first and third authors (A.~K. and V.~S.) were supported by the
grant of the Profile Mathematical Modeling and System Collaboration of V\"axj\"o
University (Sweden).
The third author (V.~S.) was  supported in part by Grant 09-01-00162 of Russian
Foundation for Basic Research.
The second (A.~K.) and third (V.~S.) were supported by the DFG Projects.}
\subjclass[2000]{11F85, 42C40, 47G30; Secondary 26A33, 46F10.}
\keywords{Adeles, wavelets, multiresolution analysis,
pseudo-differential operators, fractional operator.}
\date{ }
\begin{document}

\begin{abstract}
This paper is devoted to wavelet analysis on adele ring $\bA$ and the theory 
of pseudo-differential operators.
We develop the technique which gives the possibility to generalize
finite-dimensional results of wavelet analysis to the case of adeles $\bA$ by using
infinite tensor products of Hilbert spaces. The adele ring is roughly
speaking a subring of the direct product of all possible ($p$-adic
and Archimedean) completions $\bQ_p$ of the field of rational
numbers $\bQ$ with some conditions at infinity. Using our technique, 
we prove that $L^2(\bA)=\otimes_{e,p\in\{\infty,2,3,5,\dots\}}L^2({\bQ}_{p})$ 
is the infinite tensor product of the spaces $L^2({\bQ}_{p})$ with a
stabilization $e=(e_p)_p$, where $e_p(x)=\Omega(|x|_p)\in L^2({\bQ}_{p})$, 
and $\Omega$ is a characteristic function of the
unit interval $[0,\,1]$, $\bQ_p$ is the field of $p$-adic numbers,
$p=2,3,5,\dots$; $\bQ_\infty=\bR$.
This description allows us to construct an infinite
family of Haar wavelet bases on $L^2(\bA)$ which can be obtained by
shifts and multi-delations. The adelic multiresolution analysis
(MRA) in $L^2(\bA)$ is also constructed. In the framework of this
MRA another infinite family of Haar wavelet bases is constructed.
We introduce the adelic Lizorkin spaces of test functions and
distributions and give the characterization of these spaces in terms
of wavelet functions. One class of pseudo-differential operators
(including the fractional operator) is studied on the Lizorkin spaces. 
A criterion for an adelic wavelet function to be an eigenfunction for
a pseudo-differential operator is derived.
We prove that any wavelet function is an eigenfunction of the
fractional operator. These results allow one to create
the necessary prerequisites for intensive using of adelic wavelet
bases and pseudo-differential operators in applications.
\end{abstract}

\maketitle

\section{Introduction}
\label{s1}

\subsection{$p$-Adic and adelic analysis.}\label{s1.1}
During a few hundred years theoretical physics has been developed
on the basis of real and, later, complex numbers.
The $p$-adic numbers were described by K.~Hensel in 1897 to transfer
the ideas and techniques of power series methods to number theory.
According to the well-known Ostrovsky theorem, {\it any nontrivial
valuation on the field $\bQ$ of rational numbers is equivalent either to the real
valuation $|\cdot|$ or to one of the $p$-adic valuations $|\cdot|_p$}.
The corresponding completions of $\bQ$ give the fields $\bR$ or $\bQ_p$.
The theory of $p$-adic numbers has already penetrated intensively
into several areas of mathematics and its applications.
In the last 20 years the field of $p$-adic numbers $\bQ_p$ (as
well as its algebraic extensions, including the field of complex
$p$-adic numbers $\bC_p$) has been intensively used in theoretical
and mathematical physics, $p$-adic string theory, gravity and cosmology,
the theory of stochastic differential equations over
the field of $p$-adic numbers, Feynman path integration over $p$-adics,
the theory of $p$-adic valued probabilities and dynamical systems,
in theory of disordered systems (spin glasses)
(see~\cite{AX},~\cite{Kh1},~\cite{Kh2},~\cite{Kh1-N},~\cite{Koch3},
~\cite{Koz-00},~\cite{Vl-V-Z} and the references therein).
Applications were, however, not only restricted to physics. $p$-Adic models were also
proposed in psychology, cognitive and social sciences, and, e.g., in
biology, image analysis (see~\cite{Kh2},~\cite{Kh4}).

These applications induced and stimulated a development of new branches of
$p$-adic analysis, in particular, the theory of $p$-adic wavelets.
Recall that nowadays wavelets are applied in a lot of branches of modern
mathematics and engineering area.
The first real wavelet basis (\ref{m=1-h}), (\ref{m=2-h})
was introduced by Haar in 1910. However, for almost a century nobody could find
another wavelet function (a function whose shifts and delations form an
orthogonal basis). Only in the early nineties a method for a more general construction
of the wavelet functions appeared~\cite{Mallat-1},~\cite{Mallat-2},~\cite{Meyer-1},
~\cite{Meyer-2}. This method is based on the notion of
{\em multiresolution analysis} (MRA in the sequel).
In the $p$-adic setting the situation was the following.
In 2002 S.~V.~Kozyrev~\cite{Koz0} constructed a compactly supported
$p$-adic wavelet basis (\ref{62.0-1=}) in $L^2(\bQ_p)$,
which is an analog of the real Haar basis (\ref{m=1-h}), (\ref{m=2-h}).
J.~J.~Benedetto and R.~L.~Benedetto~\cite{Ben-Ben},
R.~L.~Benedetto~\cite{Ben1} suggested a method for constructing wavelet
bases on locally compact abelian groups with compact open
subgroups. This method is applicable for the $p$-adic setting.
It is based on a {\em theory of wavelet sets} and only allows the construction
of wavelet functions whose Fourier transforms are the characteristic functions
of some sets (see~\cite[Proposition~5.1.]{Ben-Ben}).
Moreover, these authors doubted that the development of the MRA approach
is possible. In spite of the above opinions and arguments~\cite{Ben-Ben},~\cite{Ben1},
in~\cite{S-Skopina1}, the $p$-adic {\em MRA theory} in $L^2(\bQ_p)$
was developed and new $p$-adic wavelet bases were constructed.
Some important results in $p$-adic wavelet theory were obtained
in~\cite{Al-Ev-Sk},~\cite{Al-Ev-Sk-2},~\cite{Kh-Sh-Sk},~\cite{Kh-Sh-Sk-1}.
It turned out that the theory of $p$-adic wavelets plays an important role
in the study of $p$-adic pseudo-differential operators and equations~\cite{Al-Kh-Sh=book},
~\cite{Al-Kh-Sh3},~\cite{Al-Kh-Sh8},~\cite{Kh-Sh3-n},~\cite{Koz0},~\cite{Koz-00},
\cite{S-Skopina1}. This theory gives a powerful technique to deal with $p$-adic
pseudo-differential equations.
Recall that on complex-valued functions defined on
$\bQ_p$, the operation of differentiation is {\it not defined\/}. As a result,
a large number of $p$-adic models use pseudo-differential equations instead of
differential equations. The $p$-adic multidimensional fractional operator $D^{\alpha}$ 
was introduced by M.~Taibleson~\cite{Taib1} (see also~\cite{Taib3}) in the space of 
distributions ${\cD}'(\bQ_p^n)$. The spectral theory of this fractional operator
was developed by V.S.~Vladimirov in~\cite{Vl0-1}, in particular, explicit formulas 
for the eigenfunctions of this operator were constructed (see also~\cite{Vl-V-Z}).
In~\cite{Vl0} (see also~\cite{Vl-V-Z}) V.S.~Vladimirov constructed the spectral theory 
of the Schr\"{o}dinger-type operator $D^{\alpha}+V(x)$, which was further developed
by A.N.~Kochubei~\cite{Koch0-1},~\cite{Koch0-2} (see also~\cite{Koch3}).

The adele ring $\bA$ is some subring of the direct product of all possible
($p$-adic and Archimedean) completions $\bQ_p$ of the field of
rational numbers $\bQ$. The group of ideles was introduced
by Chevalley in 1936~\cite{Chevalley} as a part of his program to formulate
class field theory so that it worked for infinite-degree extensions.
The adeles were introduced by Weil in the late 1930s as an additive analogue of ideles.
The ring $\bA$ is often used in advanced parts of number theory (for example, see~\cite{Borel},
~\cite{Cassels-Frohlich},~\cite{Conrad},~\cite{Manin-1},~\cite{Manin-Panch},~\cite{Weil}).
For example, Tate's proof (see in~\cite[J.~T.~Tate, pp.305--347]{Cassels-Frohlich})
of the functional equation $Z(s)=Z(1-s)$, where
$Z(s)=\pi^{-s/2}\Gamma(s/2)\zeta(s) $ for the Riemann zeta function
$\zeta(s)=\sum_{n\in\bN}\frac{1}{n^s}$, $\Re(s)>1$,
is based on the Fourier analysis on adeles $\bA$ and ideles (see also \cite[4.7]{Conrad}).
Recently the theory of adeles has been successfully applied in various parts of
contemporary mathematical and theoretical physics. Namely, there are
adelic constructions, e.g., in statistical mechanics, stochastics, string theory,
quantum cosmology, and quantum mechanics (see~\cite{Ar-Dr-V},~\cite{Br-F},
~\cite{Dr-1}--~\cite{Dr-3},~\cite{Karwowski},~\cite[5.8]{Koch3}~\cite{Kh1},
~\cite{Kh2},~\cite{Vl-Z},~\cite{Vlad-2},~\cite{Vl-V-Z},~\cite{Yasuda}, and
the references therein).

However it should be noted that in contrast to the real and $p$-adic analysis,
the adelic analysis practically (in particular, the adelic wavelet theory and
the theory of adelic pseudo-differential operators and equations) has
not been developed so far. In particular, the necessity of the development
the wavelet theory on adeles was mentioned in~\cite{Ben-Ben}.

\subsection{Contents of the paper.}\label{s1.2}
In this paper some problems of the adelic harmonic analysis are studied.
The points to be considered are, first, the theory of adelic Haar wavelets;
and second, the theory of simplest adelic pseudo-differential operators
in connection with wavelets.

In Sec.~\ref{s2}, some facts from the $p$-adic and adelic analysis are given.
In Sec.~\ref{s3}, we recall definitions of the real MRA and present some results
on $p$-adic MRA and wavelet bases from the papers~\cite{Kh-Sh-Sk-1},~\cite{S-Skopina1}.

In Sec.~\ref{s4}, the theory of infinite tensor products of Hilbert spaces
~\cite{Neu},~\cite{Ber86} is used to generalize finite-dimensional
results to the case of adeles.
We recall the constructions of infinite tensor product of Hilbert
spaces $\cH_e=\otimes_{e,n\in\bN}H_n$, the complete von Neumann
product of infinitely many Hilbert spaces.
The space $\cH_e$ can be obtained as the closure of the union of some subspaces. We
observe {\em certain stability} of the space $\cH_e$ with respect to the
variation of the corresponding subspaces (see Lemma~\ref{X-in-H}).
In Subsec.~\ref{s4.3}, using Lemma~\ref{X-in-H} for the special stabilizing sequence
$e=(e_p)_p$, $e_p(x_p)=\phi_p(x_p)=\Omega(|x_p|_p)$ (here $\Omega(t)$ is the characteristic function
of the segment $[0,1]\subset\bR$) we show that $L^2(\bA)$ coincides
with the infinite tensor product of the Hilbert spaces $L^2(\bQ_p)$
over all possible completions of the field $\bQ$: $L^2(\bA)=\otimes_{e,p}L^2({\bQ}_{p})$
(see Lemma~\ref{l.H(Q)=L^2(A)}).

In Sec.~\ref{s5}, we apply the above scheme to construct adelic wavelet
bases (\ref{1-ad+=11}) on $L^2({\bA},dx)$ {\em generated by the tensor product of
one-dimensional Haar wavelet bases} (\ref{m=1-h}), (\ref{62.0-1=111}), (\ref{62.0-1=zz-11}).
We would like to stress that to construct adelic wavelet bases, we need the $p$-adic
wavelet bases that contain {\it functions $\phi_p(x_p)=\Omega(|x_p|_p)$}.
Thus, instead of the Haar wavelet basis (\ref{p-101**-basis})
we will use {\em modified Haar} basis (\ref{62.0-1=111}).
According to~\cite{Al-Ev-Sk}, there are no orthogonal $p$-adis MRA based wavelet
bases except for those described in Theorem~\ref{p-th4} and Corollary~\ref{m=th4}.
Thus, formula (\ref{1-ad+=11}) gives all adelic Haar wavelet bases
generated by the tensor product of one-dimensional Haar wavelet bases.
Here infinite tensor product depends on the special stabilization $e=(e_p)_p$,
where $e_p(x_p)=\Omega(|x_p|_p)$.
Recall that in~\cite[Sec.4, formula (4.1)]{Dr-2}, a basis in the space $L^2(\bA)$ associated
with eigenfunctions of harmonic oscillators was constructed (see Remark~\ref{basis-Drag}).
To construct adelic wavelet bases, we use the standard Haar measures on $\bQ_p$
and on the adele ring $\bA$. For measures on $\bQ_p$ different from Haar measure,
see~\cite[Appendix D]{Al-Kh-Sh=book}, based on~\cite{Kos_B_09}.

In Sec.~\ref{s6}, using the idea of constructing {\em separable multidimensional MRA by means
of the tensor product of one-dimensional MRAs} suggested by Y.~Meyer~\cite{Meyer-1}
and S.~Mallat~\cite{Mallat-1},~\cite{Mallat-2}, we construct adelic wavelet bases in $L^2(\bA)$.
In a general situation we show how using some system of closed subspaces $V_j^{(k)}$, $j\in\bZ$, 
in a Hilbert spaces $H_n,\,n\in\bN$, with properties (a)--(c) of the MRA (see 
Definitions~\ref{de1-Real},~\ref{de1}) one can construct various systems of subspaces with 
the same properties in the infinite tensor product of the spaces $H_n$.
In Theorem~\ref{th1-ad-mra} adelic separable MRA is constructed. The refinable function of this MRA
given by (\ref{d3-1}) is an infinite product refinable functions of all one-dimensional MRAs.
In the framework of this MRA an infinite family of adelic Haar wavelet bases (\ref{w-62.8=10**-ad-ss-1})
are constructed.

In Sec.~\ref{s7}, we introduce the adelic Lizorkin spaces of test
functions $\Phi(\bA)$ and distributions  $\Phi'(\bA)$.
These spaces are constructed by using the original real Lizorkin spaces introduced
in~\cite{Liz1}--~\cite{Liz3} and the $p$-adic Lizorkin spaces introduced
in~\cite{Al-Kh-Sh3} (see also~\cite[Ch.~7]{Al-Kh-Sh=book}).
A basic motivation for using Lizorkin spaces rather than the Schwartz
and Bruhat--Schwartz spaces of distributions ${\cS}'(\bR)$ and ${\cD}'(\bQ_p)$ is due to
the fact that the latter spaces are not invariant under the fractional operators.
Next, in Sec.~\ref{s8}, the characterization of the adelic Lizorkin spaces
in terms of wavelets is given. Namely, it is proved that any test function from
$\Phi(\bA)$ can be represented in the form of a {\em finite} combination
of adelic wavelet functions (\ref{1-ad+=11-00}), and any distribution from $\Phi'(\bA)$ can
be represented as an {\em infinite} linear combination of adelic wavelet functions (\ref{1-ad+=11-00})
(in~\cite{Al-Koz-1}, assertions of these types were stated for ultrametric Lizorkin spaces).

In the framework of our constructions the following three facts
seem to have the same {\em deep reason}: (1) functions
$\phi^{(p)}(x_p)=\Omega(|x_p|_p)$ are stabilization
functions in the adelic Bruhat--Schwartz space (see Definition~\ref{de-B-Sch}),
(2) we use the stabilization sequence $e=(e_p)_p$ where
$e_p(x_p)=\Omega(|x_p|_p)$ in proving the fact that $L^2(\bA)=\otimes_{e,p}L^2({\bQ}_{p})$,
(3) under the projection of the space $L^2(\bQ_p)$ onto $L^2(\bZ_p)$ some elements of
wavelet basis (\ref{p-101**-basis}) and Kozyrev's wavelet basis (\ref{62.0-1=})
are transformed into functions which are proportional to the same function
$\Omega(|x_p|_p)$ (see Propositions~\ref{lem-zz-1}--\ref{lem-zz-1-00}).

In Sec.~\ref{s9}, by Definition~(\ref{ad-frac-op-pd}),~(\ref{op-ad-1-2}) a class
of pseudo-differential operators on the adelic Lizorkin spaces is introduced.
The fractional operators $D^{\widehat{\gamma}}$, $\widehat{\gamma}\in \bC^{\infty}$ (see
Definition~(\ref{ad-frac-op}),~(\ref{op-ad-2}),~(\ref{62**-ad})),
and $D^{\gamma}$, $\gamma\in \bC$ (see  Definition~(\ref{ad-frac-op-2}),~(\ref{op-ad-2-1}))
belong to this class.
We prove that the Lizorkin spaces of test functions $\Phi(\bA)$ and distributions $\Phi'(\bA)$
are {\it invariant\/} under the above-mentioned pseudo-differential operators. Moreover,
a family of fractional operators on the space of distributions $\Phi'(\bA)$ forms
an abelian group. Thus the Lizorkin spaces constitute ``natural'' domains of
definition for this class of pseudo-differential operators. Note, that in~\cite{Kh-Rad} the
fractional operator was considered in $L^2(\bA)$.
In Subsec.~\ref{s9.4}, the spectral theory of adelic pseudo-differential operators
is developed. By Theorem~\ref{th-o4.2-ad}, we derive a criterion for an adelic
wavelet function to be an eigenfunction for a pseudo-differential operator. It is
proved that any wavelet function is an eigenfunction of a fractional operator.
Thus the {\em adelic wavelet analysis is closely connected with the spectral analysis
of pseudo-differential operators}.
Using results of Sec.~\ref{s9}, similarly to the $p$-adic case, one can develop
the {\em ``variable separation method''} (an analog of the classical Fourier method)
to reduce solving adelic pseudo-differential equations to solving ordinary
differential equations with respect to the real variable $t$ (for details,
see~\cite[Ch.~10]{Al-Kh-Sh=book},~\cite{Al-Khr-Sh-2011}).

\section{Preliminary results}
\label{s2}

\subsection{$p$-Adic numbers.}\label{s2.1}
We shall systematically use the notation and results from the book~\cite{Vl-V-Z}.
Let $\bN$, $\bZ$, $\bQ$, $\bR$, $\bC$ be the sets of positive integers, integers, rational, real,
and complex numbers, respectively.

According to the well-known Ostrovsky theorem, {\it any nontrivial
valuation on the field of rational numbers ${\bQ}$ is equivalent either to the real
valuation $|\cdot|$ or to one of the $p$-adic valuations $|\cdot|_p$}.
This $p$-adic norm $|\cdot|_p$ is defined as follows:
if an arbitrary rational number $x\ne 0$ is represented as
$x=p^{\gamma}\frac{m}{n}$, where $\gamma=\gamma(x)\in \bZ$ and
the integers $m$, $n$ are not divisible by $p$, then
\begin{equation}
\label{1}
|x|_p=p^{-\gamma}, \quad x\ne 0, \qquad |0|_p=0.
\end{equation}
The norm $|\cdot|_p$ is {\em non-Archimedean} and satisfies
the {\em strong triangle inequality} $|x+y|_p\le \max(|x|_p,|y|_p)$.
The completion of $\bQ$ with respect to the usual absolute value $|\cdot|$
gives the field of real numbers $\bR$. The field $\bQ_p$ of $p$-adic numbers
is defined as the completion of the field of rational numbers $\bQ$ with respect
to the norm $|\cdot|_p$. Next we will denote $|x|_\infty=|x|$,  $\bQ_{\infty}=\bR$ and
$\bZ_{\infty}=\bZ$. By
\begin{equation}
\label{Q-valuations}
V_{\bQ}=\{\infty,2,3,5,\dots\}
\end{equation}
we denote the set of indices for all valuations on the field $\bQ$.

Any $p$-adic number $x\in\bQ_p$, $x\ne 0$, is represented in the {\em canonical form}
\begin{equation}
\label{8.1}
x=\sum_{k=\gamma}^\infty x_kp^k,
\end{equation}
where $\gamma=\gamma(x)\in \bZ$, \ $x_k\in {\bF}_p=\{0,1,\dots,p-1\}$,
$x_{\gamma}\ne 0$, $\gamma\leq k<\infty$.
The series is convergent in the $p$-adic norm $|\cdot|_p$, and one
has $|x|_p=p^{-\gamma}$. The {\it fractional part} of the number
$x\in \bQ_p$ (given by (\ref{8.1})) is defined as follows
\begin{equation}
\label{8.2**}
\{x\}_p=\left\{
\begin{array}{lll}
0,\quad \text{if} \quad \gamma(x)\geq 0 \quad  \text{or} \quad x=0,&&  \\
x_{\gamma}p^{\gamma}+\cdots+x_{-1}p^{-1},
\quad \text{if} \quad \gamma(x)<0. && \\
\end{array}
\right.
\end{equation}
The set $\bQ_p^{\times}=\bQ_p\setminus\{0\}$ is the {\em multiplicative group of the
field $\bQ_p$}.
$p$-Adic numbers $\bZ_p=\{x\in \bQ_p:|x|_p\le 1\}$
are called {\em integer $p$-adic numbers}. In view of (\ref{8.1}), $\bZ_p$ consists of
$p$-adic numbers
\begin{equation}
\label{8.2-kat}
x=\sum_{k=0}^{\infty}x_kp^k.
\end{equation}
$\bZ_p$ is a subring of the ring $\bQ_p$.
The {\em multiplicative group of the ring $\bZ_p$} is the set
$$
\bZ_p^{\times}=\bigl\{x\in\bZ_p:|x|_p=1\bigr\}
=\Bigl\{x\in\bZ_p:x=\sum_{k=0}^{\infty}x_kp^k,\quad x_0\ne 0\Bigr\}.
$$

Denote by $B_{\gamma}(a)=\{x\in \bQ_p: |x-a|_p \le p^{\gamma}\}$ the ball
of radius $p^{\gamma}$ with the center at the point $a\in \bQ_p$
and by $S_{\gamma}(a)=\{x\in \bQ_p: |x-a|_p = p^{\gamma}\}
=B_{\gamma}(a)\setminus B_{\gamma-1}(a)$ the corresponding sphere,
$\gamma \in \bZ$. For $a=0$ we set $B_{\gamma}=B_{\gamma}(0)$ and
$S_{\gamma}=S_{\gamma}(0)$.

\subsection{$p$-Adic distributions.}\label{s2.2}
A complex-valued function $f$ defined on $\bQ_p$ is called
{\it locally-constant} if for any $x\in \bQ_p$ there exists an
integer $l(x)\in \bZ$ such that
$$
f(x+y)=f(x), \quad y\in B_{l(x)}.
$$

Let ${\cE}(\bQ_p)$ and ${\cD}(\bQ_p)$ be the linear spaces of
locally-constant $\bC$-valued functions on $\bQ_p$ and locally-constant
$\bC$-valued compactly supported functions (so-called Bruhat--Schwartz test functions),
respectively~\cite[VI.1.,2.]{Vl-V-Z}. If $\varphi \in {\cD}(\bQ_p)$, then
according to Lemma~1 from~\cite[VI.1.]{Vl-V-Z}, there exists $l\in \bZ$,
such that
$$
\varphi(x+y)=\varphi(x), \quad y\in B_l, \quad x\in \bQ_p.
$$
The largest of such numbers $l=l(\varphi)$ is called the
{\em parameter of constancy} of the function $\varphi$.
Let us denote by ${\cD}^l_N(\bQ_p)$ the finite-dimensional space of test
functions ${\cD}^l_N(\bQ_p)$ from ${\cD}(\bQ_p)$ having supports in
the ball $B_N$ and with {\em parameters of constancy} \, $\ge l$. The
following embedding holds:
$$
{\cD}^l_N(\bQ_p) \subset {\cD}^{l'}_{N'}(\bQ_p), \quad N\le N', \quad l\ge l'.
$$
We have ${\cD}(\bQ_p)=\lim\limits_{N\to \infty}{\rm ind}\,{\cD}_N(\bQ_p)$, where
${\cD}_N(\bQ_p)=\lim\limits_{l\to -\infty}{\rm ind}\,{\cD}_N^l(\bQ_p)$
(see~\cite[VI.2]{Vl-V-Z}). These representations give us the {\em inductive limit topology}
on the corresponding spaces.

Denote by ${\cD}'(\bQ_p)$ the set of all linear functionals (Bruhat--Schwartz distributions)
on ${\cD}(\bQ_p)$~\cite[VI.3.]{Vl-V-Z}.

The Fourier transform of $\varphi\in {\cD}(\bQ_p)$ is defined by the
formula
$$
{\widehat\varphi}(\xi)
=F[\varphi](\xi)=\int_{\bQ_p}\chi_p(\xi x)\varphi(x)\,dx,
\qquad \xi \in \bQ_p,
$$
where $dx$ is the Haar measure on $\bQ_p$ such that
$\int_{|x|_p\le 1}\,dx=1$, and
\begin{equation}
\label{8.2-char}
\chi_p(x)=e^{2\pi i\{x\}_p}
\end{equation}
is the additive character on $\bQ_p$ (see~\cite[III.1.]{Vl-V-Z}), $\{x\}_p$ is the
{\it fractional part} (\ref{8.2**}) of the number $x\in \bQ_p$.
The Fourier transform is a linear isomorphism ${\cD}(\bQ_p)$ onto
${\cD}(\bQ_p)$~\cite[III,(3.2)]{Taib3},~\cite[VII.2.]{Vl-V-Z}.
Moreover,
\begin{equation}
\label{8.2-char-ad}
\varphi \in {\cD}^l_N(\bQ_p) \quad \text{iff} \quad
F[\varphi] \in {\cD}^{-N}_{-l}(\bQ_p).
\end{equation}
The Fourier transform of a distribution $f\in {\cD}'(\bQ_p)$ is the
distribution ${\widehat f}=F[f]$ defined by the relation
$$
\langle F[f],\varphi\rangle=\langle f,F[\varphi]\rangle, \quad \forall \, \varphi\in {\cD}(\bQ_p).
$$
Here and in the sequel $\langle f,\varphi\rangle$ denotes
the action of a distribution $f$ on a test function $\varphi$.

If $f\in {\cD}'(\bQ_p)$, \ $a\in \bQ_p^{\times}$, $b\in \bQ_p$, then~\cite[VII,(3.3)]{Vl-V-Z}:
\begin{equation}
\label{alg-53A-test}
F[f(ax+b)](\xi)
=|a|_p^{-1}\chi_p\Big(-\frac{b}{a}\xi\Big)F[f(x)]\Big(\frac{\xi}{a}\Big),
\quad x,\xi\in \bQ_p.
\end{equation}

According to~\cite[IV,(3.1)]{Vl-V-Z},
\begin{equation}
\label{14.1}
F[\Omega(p^{-k}|\cdot|_p)](x)=p^{k}\Omega(p^k|x|_p), \quad k\in \bZ, \qquad x \in \bQ_p,
\end{equation}
where $\Omega(t)$ is the characteristic function of the segment $[0,1]\subset\bR$.
In particular,
\begin{equation}
\label{14.2}
F[\Omega(|\xi|_p)](x)=\Omega(|x|_p).
\end{equation}

\subsection{Adeles.}\label{s2.4}
We use the notation and results from~\cite[Ch.~III,\S1,2]{G-Gr-P} and~\cite{Conrad}.

\begin{Definition}
\label{df:adele}\rm
The {\em adeles} of $\bQ$ are
$$
\bA=\bA_{\bQ}=\bigl\{(x_p)_{p\in V_{\bQ}}\in \prod_{p\in V_{\bQ}}\bQ_p:x_p\in\bZ_p\,\,
\text{for almost all}\,\,p\not=\infty\bigr\},
$$
and the {\em ideles} of $\bQ$ are
$$
\bJ=\bJ_{\bQ}=\bigl\{(x_p)_{p\in V_{\bQ}}\in \prod_{p\in V_{\bQ}}\bQ_p^\times:x_p\in
\bZ_p^\times\,\, \text{for almost all}\,\,p\not=\infty\bigr\},
$$
where $V_{\bQ}=\{\infty,2,3,5,\dots\}$ is the set of indices.
\end{Definition}

The adele ring $\bA$ is the {\em restricted direct product} of $\bR$ and $\bQ_p$ for all $p=2,3,\dots$
with respect to the integer rings $\bZ_p$ (see~\cite{Conrad}).
If componentwise operations of addition and multiplication are introduced,
$\bA$ is a {\em ring of adeles} and $\bJ$ is a multiplicative group. The
additive group of the ring $\bA$ is called the {\em adelic group}.

There is a natural imbedding $\bQ\mapsto \bA$ given by
$$
\bQ\ni r\mapsto (r,r,\dots,r,\dots)\in \bA.
$$
Indeed, any constant sequence $(r,r,\dots,r,\dots)$ is an adele since $r\in \bZ_p$
for any $p$ not dividing the denominator of $r$.
The adeles (ideles) of the form $(r,r,\dots,r,\dots)$, where $r\in \bQ$, are called
{\em principal adeles} (respectively, {\em ideles}).

To define a {\em topology on }$\bA$, we show that $\bA$ is a union of
locally compact groups. Note that the direct product $\prod_{p\in V_{\bQ}}\bQ_p$
is not a locally compact group.

\begin{Definition}
\label{S-adele} \rm
Let $S$ be a finite subset of $V_\bQ$ such that $\infty\in S\subset V_\bQ$.
Define the {\em $S$-adeles} of $\bQ$ as
$$
\bA_{S}=\bA_{\bQ,S}=\prod_{p\in S}\bQ_p\times \prod_{p\not\in S}\bZ_p.
$$
\end{Definition}
For an arbitrary $S$, the space $\bA_{S}$ of $S$-adeles is locally compact (in the Tikhonov product topology)
as the product of a finite product of locally compact spaces $\bQ_p$, $p\in S$ by an infinite product of
compact spaces $\bZ_p$, $p\not\in S$. Since we have $\bA=\bigcup_{S}\bA_{S}$ \cite[Theorem 2.15]{Conrad}, the
adele group $\bA$ is locally compact (i.e. {\em any neighborhood of a point contains a compact
neighborhood of this point}). A {\em fundamental system of open neighborhoods of zero} in $\bA$ is the
following set (see~\cite[Theorem 2.17]{Conrad})
\begin{equation}
\label{fund-open-syst}
\prod_{p\in S}U_p\times \prod_{p\not\in S}\bZ_p,
\end{equation}
where $S$ is a finite subset of $V_\bQ$ that contains $\infty$ and $U_p$ is an open set in
$\bQ_p$ containing $0\in \bQ_p$ for $p\in S$.

The {\em non-Archimedean part} ${\widetilde\bA}$ of the adele ring $\bA$ is defined (see~\cite{Kh-Rad})
as the set of infinite sequences
$$
x'=(x_{2},\dots x_{p},\dots), \quad \text{where} \quad x_{p}\in \bQ_{p}, \quad p=2,3,\dots,
$$
and there exists a prime number $P=P(x')$ such that $x_{p}\in \bZ_{p}$ for $p \ge P$.

The sequence of adeles $\{x^{(n)},n\in \bN\}$ converges to the adele
$x$ ($x^{(n)}\to x,\, n\to \infty$) if

\, $(1)$  $x^{(n)}_p\to x_p$, $n\to \infty$, for any $p\in V_\bQ$, where $V_\bQ$ is defined by (\ref{Q-valuations});

\, $(2)$  there exists $N$ such that $x^{(n)}_p-x_p\in \bZ_p$ for all $n\ge N$.

Since the adele group $\bA$ is a locally compact commutative group,
{\em it possesses the Haar measure} which will be denoted by $dx$,
where $x=(x_{\infty},x_{2},\dots,x_{p},\dots)$. The Haar measure $dx$ can be expressed
in terms of the measures $dx_p$ on the groups $\bQ_p$ as follows:
\begin{equation}
\label{ad-meas}
dx=dx_{\infty}\,dx_{2}\cdots dx_{p}\cdots,
\end{equation}
where
$$
\int_{0}^{1}\,dx_{\infty}=1, \qquad \int_{\bZ_p}\,dx_p=1, \quad p\in V_\bQ.
$$
Here formula (\ref{ad-meas}) is understood in the following sense:
if
$$
f(x)=f_{\infty}(x_{\infty})f_{2}(x_{2})\cdots f_{p}(x_{p})
$$
is a cylindrical function, then
$$
\int_{\bA}f(x)\,dx=\int_{\bR}f_{\infty}(x_{\infty})\,dx_{\infty}
\int_{\bQ_2}f_{2}(x_{2})\,dx_2\cdots\int_{\bQ_p}f_{p}(x_{p})\,dx_p.
$$

Any {\em additive character} $\chi(x)$ on the adelic ring has the form
$$
\chi(x)=\chi_0(ax), \quad x=(x_{\infty},x_{2},\dots x_{p},\dots)\in
\bA,
$$
for some $a=(a_{\infty},a_{2},\dots a_{p},\dots)\in \bA$
(see~\cite[Ch.~III,\S1.5,(3)]{G-Gr-P}). Here
\begin{equation}
\label{ad-21}
\chi_0(a)=\prod_{p\in V_\bQ}\chi_p(a_p), \quad a\in \bA,
\end{equation}
where $\chi_{\infty}(a_{\infty})=e^{2\pi i a_{\infty}}$, and $\chi_{p}(a_{p})$
is defined by (\ref{8.2-char}), $p=2,3,\dots$. It is clear that for any
$a\in \bA$ there exists a prime number $P=P(a)$ such that $\chi_{p}(a_{p})=1$
for $p \ge P$, i.e., in fact, the product (\ref{ad-21}) is finite. In other words,
$\chi_0(a)=\exp\big(2\pi i \sigma(a)\big)$, where
$\sigma(a)=\sum_{p\in V_\bQ}a_p \,\, ({\rm mod}\,1)$.

\begin{Definition}
\label{de-B-Sch} \rm
{\rm(}see~{\rm\cite[Ch.~III,\S2.1]{G-Gr-P})}
Let $\cS(\bR)$ be the real Schwartz space of tempered test functions.
The {\em space of Bruhat--Schwartz adelic test functions} $\cS(\bA)$ consists of finite
linear combinations of elementary functions of the form
\begin{equation}
\label{ad-1}
\varphi(x)=\prod_{p\in V_\bQ}\varphi_{p}(x_{p}),
\quad x\in \bA,
\end{equation}
where the factors $\varphi_{p}(x_{p})$ are such that:

(i) $\varphi_{\infty}(x_{\infty})\in \cS(\bR)$;

(ii) $\varphi_{p}(x_{p})\in \cD(\bQ_p)$, $p=2,3,\dots$;

(iii) there exists $P=P(\varphi)$ such that $\varphi_{p}(x_{p})=\Omega(|x_{p}|_p)$ for all $p> P$
(the number $P(\varphi)$ is called the {\em parameter of finiteness} of an elementary function $\varphi$).

\end{Definition}

In view of condition $(iii)$ the space of test functions $\cS(\bA)$ admits a natural representation
\begin{equation}
\label{ad-1-test-1}
\cS(\bA)=\lim\limits_{m\in V_{\bQ}\setminus\infty}{\rm ind}\,\cS^{[m]}(\bA),
\end{equation}
where $\cS^{[m]}(\bA)$ is the subspace of all test functions with the
{\em parameter of finiteness} $m$, $m\in V_{\bQ}\setminus\infty$. The representation 
(\ref{ad-1-test-1}) equips the space $\cS(\bA)$ with the {\em inductive limit topology}.
%%%%%%%The space $\cS(\bA)$ is complete in this topology.%%%%%%

The Bruhat--Schwartz adelic test functions are continuous on $\bA$.
The Bruhat--Schwartz space of {\em adelic distributions} $\cS'(\bA)$ was studied
in~\cite{Kh-Rad},~\cite{Dr-Kh-Rad}.

The spaces of test functions and distributions connected with the {\em non-Archimedean part} of
adeles ${\widetilde\bA}$ we denote by $\cS({\widetilde\bA})$ and  $\cS'({\widetilde\bA})$, correspondingly
(see~\cite{Kh-Rad}).

The Fourier transform of $\varphi\in {\cS}(\bA)$ is defined by the formula
$$
{\widehat\varphi}(\xi)
=F[\varphi](\xi)\stackrel{def}{=}\int_{\bA}\chi_0(\xi x)\varphi(x)\,dx,
\qquad \xi \in \bA,
$$
where $\chi_0$ is defined by (\ref{ad-21}).
It is clear that $F[{\cS}(\bA)]={\cS}(\bA)$.
If $f\in \cS'(\bA)$, then
$$
\langle F[f], \varphi\rangle\stackrel{def}{=}\langle f, F[\varphi]\rangle,
\quad \forall \, \varphi\in {\cS}(\bA),
$$
and $F[f]\in \cS'(\bA)$.

For $f,g\in L^2(\bA)$ we have
\begin{equation}
\label{11=2-ad}
(f,g)=(F[f],F[g]), \qquad ||f||_2=||F[f]||_2,
\end{equation}
where
$$
(f,g)=\int_{\bA}f(x)\overline{g(x)}\,dx
$$
is the scalar product of the functions $f$ and $g$ in $L^{2}(\bA)$.

It is clear that the space ${\cS}(\bA)$ is dense in $L^{2}(\bA)$.

\section{Real and $p$-adic MRAs and wavelet bases}
\label{s3}

\subsection{Real MRA.}\label{s3.1}
Now we recall the definitions of the real and $p$-adic multiresolution analysis.

\begin{Definition}
\label{de1-Real} \rm
(for example, see~\cite[1.2]{NPS}) A collection of closed spaces
$V_j\subset L^2(\bR)$, $j\in\bZ$, is called a
{\it multiresolution analysis {\rm(}MRA{\rm)} in $L^2(\bR)$} if the
following axioms hold

(a) $V_j\subset V_{j+1}$ for all $j\in\bZ$;

(b) $\bigcup\limits_{j\in\bZ}V_j$ is dense in $L^2(\bR)$;

(c) $\bigcap\limits_{j\in\bZ}V_j=\{0\}$;

(d) $f(\cdot)\in V_j \Longleftrightarrow f(2\cdot)\in V_{j+1}$
for all $j\in\bZ$;

(e) there exists a function $\phi \in V_0$
such that the system $\{\phi(\cdot-n), n\in \bZ\}$ is an orthonormal
basis for $V_0$.
\end{Definition}

The function $\phi$ from axiom (e) is
called {\em refinable} or {\em scaling}. One also says that a
{\em MRA is generated by its scaling function}.
The function $\phi$ is a solution of a special kind of functional
equations which are called {\em refinement equations}. Their solutions
are called {\em refinable functions}.

According to the standard MRA-scheme (see, e.g.,~\cite[\S 1.3]{NPS})
for each $j$, we define ({\em wavelet spaces})
\begin{equation}
\label{61-ad-Haar}
W_j=V_{j+1}\ominus V_j, \quad j\in\bZ.
\end{equation}
It is easy to see that
$$
f\in W_j \Longleftrightarrow f(2\cdot)\in W_{j+1},
\quad\text{for all}\quad j\in \bZ
$$
and $W_j\perp W_k$, $j\ne k$.
Taking into account axioms (b) and (c), we obtain
\begin{equation}
\label{61.1-ad-Haar}
L^2(\bR)={\bigoplus\limits_{j\in\bZ}W_j}
=V_0\oplus\Big({\bigoplus\limits_{j\in \bZ_{+}}W_j}\Big),
\end{equation}
where $\bZ_{+}=\{0\}\cup \bN$.

It is well known that the Haar {\em refinement equation} has the following form
\begin{equation}
\label{m=62.0-3-haar}
\phi^H(t)=\phi^H(2t)+\phi^H(2t-1), \quad t\in \bR.
\end{equation}
Its solution (the characteristic function $\chi_{[0,1]}(t)$ of the unit interval $[0,1]$)
\begin{equation}
\label{m=3-h}
\phi^H(t)=\chi_{[0,1]}(t)=\left\{
\begin{array}{rll}
1, && t\in [0,1], \\
0, && t\notin [0,1],  \\
\end{array}
\right. , \quad t\in \bR,
\end{equation}
generates the Haar MRA. In the framework of the Haar MRA
one can construct the well-known Haar wavelet basis in $L^2(\bR)$
\begin{equation}
\label{m=1-h}
\psi_{j n}^{H}(t)=2^{j/2}\psi^{H}\big(2^{j}t-n\big), \quad t\in \bR,
\quad j\in \bZ, \quad n\in \bZ,
\end{equation}
where
\begin{equation}
\label{m=2-h}
\psi^{H}(t)=\left\{
\begin{array}{rll}
1, && 0 \le t < \frac{1}{2}, \\
-1, && \frac{1}{2} \le t < 1, \\
0, && t\not\in [0,1), \\
\end{array}
\right.
=\chi_{[0,\frac{1}{2})}(t)-\chi_{[\frac{1}{2},1)}(t), \quad t\in \bR,
\end{equation}
is called the {\em  Haar wavelet function}
(whose dyadic shifts and delations form the {\em Haar basis} (\ref{m=1-h})).

Using the second decomposition (\ref{61.1-ad-Haar}), instead of the Haar wavelet
basis (\ref{m=1-h}) in $L^2(\bR)$, one can consider the following wavelet basis
\begin{equation}
\label{62.0-1=111-Haar}
\begin{array}{rcl}
\displaystyle
\phi^{H}(t-n)\in V_0, && n\in \bZ, \\
\displaystyle
\psi_{j n}^{H}(t)=2^{j/2}\psi^{H}\big(2^{j}t-n\big)\in W_j, && \, j\in \bZ_{+}, \, n\in \bZ,
\quad t\in \bR, \\
\end{array}
\end{equation}
where the wavelet function $\psi^{H}$ is given by (\ref{m=2-h}).
Basis (\ref{62.0-1=111-Haar}) will be called {\em modified Haar} basis.

\subsection{$p$-adic MRA.}\label{s3.2}
In the $p$-adic case a ``natural'' set of shifts for $\bQ_p$ is the
following (see~\cite{Koz0} and~\cite{S-Skopina1}):
\begin{equation}
\label{62.0**}
I_p=\left\{a\in \bQ_p: \{a\}_p=a\right\}=
\{a=\frac{a_{-\gamma}}{p^\gamma}+\cdots+\frac{a_{-1}}{p}:a_j\in{\bF}_p,\,\,-\gamma\leq j\leq -1\}.
\end{equation}

In~\cite{S-Skopina1}, similarly to Definition~\ref{de1-Real}, the following definition
was introduced.

\begin{Definition}
\label{de1} \rm
(\cite{S-Skopina1}) A collection of closed spaces
$V_j\subset L^2(\bQ_p)$, $j\in\bZ$, is called a
{\it multiresolution analysis {\rm(}MRA{\rm)} in $L^2(\bQ_p)$} if the
following axioms hold

(a) $V_j\subset V_{j+1}$ for all $j\in\bZ$;

(b) $\bigcup\limits_{j\in\bZ}V_j$ is dense in $L^2(\bQ_p)$;

(c) $\bigcap\limits_{j\in\bZ}V_j=\{0\}$;

(d) $f(\cdot)\in V_j \Longleftrightarrow f(p^{-1}\cdot)\in V_{j+1}$
for all $j\in\bZ$;

(e) there exists a function $\phi \in V_0$
such that the system $\{\phi(\cdot-a), a\in I_p\}$ is an orthonormal
basis for $V_0$.
\end{Definition}

It follows immediately
from axioms (d) and (e) that the functions $p^{j/2}\phi(p^{-j}\cdot-a)$,
$a\in I_p$, form an orthonormal basis for $V_j$, $j\in\bZ$.

According to the standard scheme (see, e.g.,~\cite[\S 1.3]{NPS})
for the construction of MRA-based wavelets, for each $j$, we define
a space $W_j$ ({\em wavelet space}) as the orthogonal complement
of $V_j$ in $V_{j+1}$, i.e.,
\begin{equation}
\label{61-ad}
V_{j+1}=V_j\oplus W_j, \qquad j\in \bZ.
\end{equation}
It is not difficult to see that
\begin{equation}
\label{61.0-ad}
f\in W_j \Longleftrightarrow f(p^{-1}\cdot)\in W_{j+1},
\quad\text{for all}\quad j\in \bZ
\end{equation}
and $W_j\perp W_k$, $j\ne k$.
Taking into account axioms (b) and (c), we obtain
\begin{equation}
\label{61.1-ad}
L^2(\bQ_p)={\bigoplus\limits_{j\in\bZ}W_j}
=V_0\oplus\Big({\bigoplus\limits_{j\in \bZ_{+}}W_j}\Big).
\end{equation}
In view of (\ref{61-ad}) and axiom $(a)$, we have
\begin{equation}
\label{61.1-ad-vv}
V_{j}=V_0\oplus\Big({\bigoplus\limits_{0\le k\le j-1}W_k}\Big)
\quad j\in \bN.
\end{equation}

If we now find a finite number of functions $\psi_{\nu} \in W_0$,
$\nu\in A$, such that the system $\{\psi_{\nu}(x-a): a\in I_p, \nu\in A\}$
forms an orthonormal basis for $W_0$, then, due to (\ref{61.0-ad}),
(\ref{61.1-ad}), the system
$$
\{p^{j/2}\psi_{\nu}(p^{-j}\cdot-a): \, a\in I_p, \, j\in\bZ, \, \nu\in A\},
$$
is an orthonormal basis for $L^2(\bQ_p)$.
Such functions $\psi_{\nu}$, $\nu\in A$, are called {\em wavelet
functions} and the corresponding basis is called a {\em wavelet basis}.

In~\cite{Kh-Sh1}, the following conjecture was proposed: to construct
a $p$-adic analog of the real Haar MRA, we can use the following
{\it refinement equation}
\begin{equation}
\label{m=62.0-3}
\phi(x)=\sum_{r=0}^{p-1}\phi\Big(\frac{1}{p}x-\frac{r}{p}\Big),
\quad x\in \bQ_p,
\end{equation}
whose solution (a {\it refinable function}) $\phi$ is the
characteristic function $\Omega\big(|x|_p\big)$ of the unit disc.
The above {\it refinement equation} is {\it natural} and reflects
the fact that the unit disc $B_{0}=\{x: |x|_p \le 1\}$ is the union of $p$
mutually disjoint discs of radius $p^{-1}$:
$$
B_{0}=\bigcup_{r=0}^{p-1}B_{-1}(r),\quad \text{where}\quad
B_{-1}(r)=\Bigl\{x: \bigl|x-r\bigr|_p \le \frac{1}{p}\Bigr\},
\quad r\in {\bF}_p.
$$
This geometric fact is the result of the ultrametric structure of
the $p$-adic field $\bQ_p$.

The term {\em $p$-adic Haar MRA} is connected with the fact that
for $p=2$ the equation (\ref{m=62.0-3}) is a {\em $2$-adic refinement equation}
$$
\phi(x)=\phi\Big(\frac{1}{2}x\Big)+\phi\Big(\frac{1}{2}x-\frac{1}{2}\Big),
\quad x\in \bQ_2,
$$
which is a direct analog of the {\em refinement equation} (\ref{m=62.0-3-haar})
generating the Haar MRA and the Haar wavelet basis (\ref{m=1-h}), (\ref{m=2-h})
in the real case.

In~\cite{S-Skopina1}, using the above relation (\ref{m=62.0-3}) as a refinement equation,
the Haar MRA was constructed for $p=2$ (for the case $p \ne 2$, see~\cite[8.4]{Al-Kh-Sh=book}).
Here
\begin{equation}
\label{m=62.0-2-vv-ad}
V_j=\overline{{\rm span}\big\{p^{j/2}\phi\big(p^{-j}\cdot-a\big):a\in I_p\big\}},
\quad j\in \bZ,
\end{equation}
where $\phi(x)=\Omega\big(|x|_p\big)$. In contrast to the Haar MRA in $L^2(\bR)$
(which generates only one wavelet basis (\ref{m=1-h}), (\ref{m=2-h})),
in the $p$-adic setting there exist {\it infinity many different
Haar orthogonal bases} for $L^2(\bQ_p)$ generated by the same MRA.
Explicit formulas for generating $p$-adic wavelet functions were
obtained in~\cite{S-Skopina1} for $p=2$ and later
in~\cite{Kh-Sh-Sk-1} for $p \ne 2$. Let us recall these results.

\begin{Theorem}
\label{p-th4}
{\rm (~\cite{Kh-Sh-Sk-1})}
The set of all compactly supported wavelet functions is given by
\begin{equation}
\label{p-101**}
\psi_{\mu}(x)=\sum_{\nu=1}^{p-1}\sum_{k=0}^{p^s-1}\alpha_{\nu;k}^{\mu}
\psi_{\nu}^{(0)}\Big(x-\frac{k}{p^s}\Big),
\quad \mu\in \bF_{p}^{\times}=\{1,2,\dots,p-1\},
\end{equation}
{\rm(}here $\supp\,\psi_{\mu}\subset B_{s}(0)$, $s\ge 0${\rm)}, where
\begin{equation}
\label{m=62.0}
\psi^{(0)}_{\nu}(x)=\chi_p\Big(\frac{\nu}{p}x\Big)\Omega\big(|x|_p\big),
\quad \nu\in \bF_{p}^{\times}, \qquad x\in \bQ_p,
\end{equation}
are Kozyrev's wavelet functions, $s=0,1,2,\dots$, and
$$
\alpha_{\nu;k}^{\mu}=
\qquad\qquad\qquad\qquad\qquad\qquad\qquad\qquad\qquad\qquad\qquad\qquad\qquad\qquad\qquad\quad
$$
\begin{equation}
\label{p-101***}
=\left\{
\begin{array}{lll}
-p^{-s}\sum_{m=0}^{p^s-1}e^{-2\pi i\frac{-\frac{\nu}{p}+m}{p^s}k}\sigma_{\mu m}u_{\mu \mu},
&& \mu=\nu, \medskip\\
-p^{-2s}\sum_{m=0}^{p^s-1}\sum_{n=0}^{p^s-1}e^{-2\pi i\frac{-\frac{\nu}{p}+m}{p^s}k}
\frac{1-e^{2\pi i\frac{\mu-\nu}{p}}}{1-e^{2\pi i\frac{\frac{\mu-\nu}{p}+m-n}{p^s}}}
\sigma_{\nu m}u_{\nu \mu}, && \mu\ne\nu, \\
\end{array}
\right.
\end{equation}
$|\sigma_{\mu m}|=1$, $u_{\mu \nu}$ are entries of an arbitrary unitary $(p-1)\times (p-1)$
matrix $U$; \ $\mu,\nu \in \bF_{p}^{\times}$; $k=0,1,\dots,p^s-1$.
\end{Theorem}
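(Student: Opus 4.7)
My plan is to first observe that since $\psi_\mu \in W_0$ with $\supp \psi_\mu \subset B_s$, and since Kozyrev's functions $\{\psi_\nu^{(0)}(\cdot - b) : \nu \in \bF_p^\times,\ b \in I_p\}$ already form an orthonormal basis of $W_0$, I can expand $\psi_\mu$ in this basis. The compact support condition forces only shifts $b = k/p^s$ with $k = 0, 1, \ldots, p^s - 1$ to contribute, immediately yielding the structural form (\ref{p-101**}). The task then reduces to characterizing exactly which coefficient arrays $(\alpha_{\nu;k}^\mu)$ produce a genuine wavelet basis of $W_0$.

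I would then pass to the Fourier side, where the picture becomes transparent. By (\ref{14.2}) and (\ref{alg-53A-test}),
$$
F[\psi_\nu^{(0)}(\cdot - b)](\xi) = \chi_p(-b\xi)\,\Omega\bigl(|\xi + \nu/p|_p\bigr),
$$
so each Kozyrev wavelet has Fourier transform supported in a single ``sector'' $B_0(-\nu/p)$, and the $p-1$ sectors $B_0(-\nu/p)$, $\nu \in \bF_p^\times$, are pairwise disjoint and partition the sphere $S_1(0)$. Consequently $\widehat{\psi_\mu}$ is supported on $S_1(0)$, and on each sector $B_0(-\nu/p)$ it is a finite $\chi_p$-Fourier series in $k$ of period $p^{-s}$, hence constant on each sub-ball of radius $p^{-s}$.

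The two remaining requirements are orthonormality and totality of $\{\psi_\mu(\cdot - a) : \mu \in \bF_p^\times,\ a \in I_p\}$ in $W_0$. Via Plancherel and the standard $\bZ_p$-periodization trick, orthonormality under $I_p$-shifts translates into the assertion that the $(p-1) \times (p-1)$ matrix built out of the constant values of $\widehat{\psi_\mu}$ on sub-balls sharing a common ``cell index'' is unitary at every frequency bin; this is precisely the information encoded by the arbitrary unitary matrix $U = (u_{\mu\nu})$ together with the unimodular phases $\sigma_{\mu m}$. Totality is then automatic by a cardinality argument: two orthonormal families of equal size inside $W_0$, one of which spans, force the other to span as well.

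The main obstacle is extracting the closed-form expression (\ref{p-101***}) for $\alpha_{\nu;k}^\mu$ from the unitary data $(U, \sigma_{\mu m})$. The strategy is to start from the cell-by-cell description of $\widehat{\psi_\mu}$ on each sector $B_0(-\nu/p)$ and apply finite Fourier inversion on $\bZ/p^s\bZ$ to recover the coefficients. The diagonal case $\mu = \nu$ collapses to a single sum, producing the first branch of (\ref{p-101***}). The off-diagonal case $\mu \ne \nu$ produces a double sum in which the inner summation over a geometric series of phases telescopes into the factor $(1 - e^{2\pi i (\mu - \nu)/p}) / (1 - e^{2\pi i ((\mu - \nu)/p + m - n)/p^s})$; the delicate point is to organize the bookkeeping of indices $(\nu, k, m, n)$ and the shift $-\nu/p$ between sectors so that this collapse is transparent and the normalization $p^{-s}$ versus $p^{-2s}$ comes out correctly.
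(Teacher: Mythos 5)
The paper itself contains no proof of Theorem~\ref{p-th4}: it is quoted from~\cite{Kh-Sh-Sk-1}, so your outline can only be judged on its own merits. Your first two steps are sound: since $\supp\,\psi_{\mu}\subset B_s$ and the shifted Kozyrev functions form an orthonormal basis of $W_0$, the structural expansion (\ref{p-101**}) with shifts $k/p^s$ is immediate, and the Fourier picture is correct (each $\psi^{(0)}_{\nu}(\cdot-b)$ has transform $\chi_p(b\xi)\,\Omega(|\xi+\nu/p|_p)$ --- note the sign dictated by (\ref{alg-53A-test}) --- supported in one of the $p-1$ disjoint unit balls covering the sphere $S_1$). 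The genuine gap is in the step that carries the theorem. You assert that orthonormality of the $I_p$-shift system is equivalent to unitarity of the $(p-1)\times(p-1)$ matrix of cell values of $\widehat{\psi_{\mu}}$ \emph{at every frequency bin}, and that this datum is ``precisely'' a unitary $U$ plus phases $\sigma_{\mu m}$. These are not the same thing: unwinding (\ref{p-101***}) (the inner sum over $n$ is a full geometric sum equal to $p^s$, and the denominators never vanish because $\mu-\nu$ is prime to $p$) shows that the cell-value matrix at bin $m$ is $-\,\mathrm{diag}(\sigma_{\nu m})\,U$ with one and the same $U$ for \emph{all} $m$; a family of independent unitaries $U_m$, one per bin, is strictly larger as soon as $p\ge 3$ and $s\ge 1$. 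So if per-bin unitarity were the whole condition, there would exist compactly supported ``wavelet functions'' outside the family (\ref{p-101**})--(\ref{p-101***}), contradicting the exhaustiveness of the list (cf.~\cite{Al-Ev-Sk}). The constraints that enforce this rigidity are exactly what your sketch skips: the spectral balls are centred at $-\nu/p$, so a shift by $a\in I_p$ acts on the $\nu$-th sector with the extra $\nu$-dependent phase $\chi_p(-a\nu/p)$, and $I_p$ is not a subgroup of $\bQ_p$, so the real-variable periodization/fiberization you invoke does not apply verbatim; writing out $\langle\psi_{\mu}(\cdot-a),\psi_{\mu'}(\cdot-a')\rangle$ couples different sectors and different bins, and deriving and solving these coupled equations is the actual content of the theorem, including the source of the exact normalizations $p^{-s}$ and $p^{-2s}$.

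A second flaw is the totality argument. ``Two orthonormal families of equal size inside $W_0$, one of which spans, force the other to span'' is false in infinite dimensions (delete one element of an orthonormal basis and re-index); completeness of $\{\psi_{\mu}(\cdot-a):\mu\in\bF_p^{\times},\,a\in I_p\}$ in $W_0$ has to be extracted from the same matrix analysis (square isometric fiber matrices are unitary, whence the Kozyrev generators lie in the closed span), not from a cardinality count. Until the orthonormality-plus-completeness conditions are written down and solved in full --- which is precisely where the single matrix $U$, the phases $\sigma_{\nu m}$, and hence the closed form (\ref{p-101***}) come from --- the proposal does not establish either inclusion of the asserted equality between the family (\ref{p-101**})--(\ref{p-101***}) and the set of all compactly supported wavelet functions.
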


\begin{Corollary}
\label{m=th4}
{\rm (~\cite{S-Skopina1})}
Let $p=2$. For every $s=0,1,2,\dots$ the function
\begin{equation}
\label{m=101}
\psi(x)=\sum_{k=0}^{2^s-1}\alpha_{k}\psi^{(0)}\Big(x-\frac{k}{2^s}\Big),
\end{equation}
is a compactly supported wavelet function {\rm(}$\supp\,\psi \subset B_{s}(0)${\rm)}
for the Haar MRA if and only if
\begin{equation}
\label{m=108}
\alpha_{k}
=2^{-s}\sum_{r=0}^{2^s-1}\gamma_re^{-i\pi\frac{2r-1}{2^{s}}k},
\end{equation}
where the wavelet function $\psi^{(0)}:=\psi^{(0)}_{1}$ is given by {\rm (\ref{m=62.0})},
$\gamma_r\in \bC$ is an arbitrary constant such that $|\gamma_r|=1$; $k,r=0,1,\dots,2^s-1$.
\end{Corollary}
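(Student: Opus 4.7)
The plan is to derive the corollary as a direct specialization of Theorem~\ref{p-th4} to the case $p=2$, together with an absorption of constants. First, I would observe that for $p=2$ the multiplicative group $\bF_p^\times=\{1\}$ is trivial, so the only admissible index in Theorem~\ref{p-th4} is $\mu=\nu=1$. Consequently the double sum in (\ref{p-101**}) collapses to a single sum over $k$, with a single Kozyrev function $\psi^{(0)}:=\psi_1^{(0)}$, and the $(p-1)\times(p-1)=1\times1$ unitary matrix $U$ reduces to a single unimodular scalar $u_{11}$.

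Next I would substitute $\mu=\nu=1$, $p=2$ into the first branch of (\ref{p-101***}) (the second branch $\mu\ne\nu$ does not occur). This yields
\begin{equation*}
\alpha_{1;k}^{1}=-2^{-s}\sum_{m=0}^{2^{s}-1}
 e^{-2\pi i\,\frac{-\tfrac{1}{2}+m}{2^{s}}\,k}\,\sigma_{1m}u_{11}.
\end{equation*}
A short exponent simplification gives $e^{-2\pi i(m-\tfrac{1}{2})k/2^{s}}=e^{-i\pi(2m-1)k/2^{s}}$, so
\begin{equation*}
\alpha_{1;k}^{1}=2^{-s}\sum_{m=0}^{2^{s}-1}
\bigl(-\sigma_{1m}u_{11}\bigr)\,e^{-i\pi(2m-1)k/2^{s}}.
\end{equation*}
Because $|\sigma_{1m}|=|u_{11}|=1$, the products $\gamma_{r}:=-\sigma_{1r}u_{11}$ are arbitrary unimodular complex numbers (with $r$ replacing $m$), and one recovers exactly the formula (\ref{m=108}). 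This proves the ``if'' direction.

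For the ``only if'' direction I would invoke the completeness part of Theorem~\ref{p-th4}: it asserts that (\ref{p-101**})--(\ref{p-101***}) describe \emph{all} compactly supported wavelet functions. Specialized to $p=2$, any such $\psi$ must therefore arise from the above collapsed formula for some unimodular $\sigma_{1m}$ and $u_{11}$, and hence must have coefficients of the form (\ref{m=108}) for some unimodular $\gamma_{r}$. I do not anticipate any real obstacle here; the only subtle point is the bookkeeping of signs and of the identification $\gamma_{r}=-\sigma_{1r}u_{11}$, which requires noting that the overall sign and phase can be absorbed into the free unimodular parameters without restricting generality. Everything else is a direct substitution $p=2$ into the previously stated theorem.
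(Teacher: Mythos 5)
Your specialization is correct, and the algebra checks out: for $p=2$ one has $\bF_2^{\times}=\{1\}$, only the $\mu=\nu=1$ branch of (\ref{p-101***}) survives, the exponent simplifies as $e^{-2\pi i(m-\frac12)k/2^{s}}=e^{-i\pi(2m-1)k/2^{s}}$, and $\gamma_r:=-\sigma_{1r}u_{11}$ runs over arbitrary unimodular numbers exactly as the $\sigma_{1r}$ do, so (\ref{m=108}) is recovered in both directions of the parametrization. Note, however, that the paper itself gives no proof of this statement: Corollary~\ref{m=th4} is quoted from \cite{S-Skopina1}, where it was obtained \emph{directly} for $p=2$ by working inside the Haar MRA (solving the orthogonality and completeness conditions for generators of $W_0$), and this in fact predates the general-$p$ Theorem~\ref{p-th4} of \cite{Kh-Sh-Sk-1}; so your route inverts the historical logic but is perfectly legitimate within this paper, and is arguably the cleaner derivation once the general theorem is available. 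The one point you should make explicit in the ``only if'' direction is why the theorem's coefficients can be identified with the given $\alpha_k$ at the \emph{same} level $s$: since $k/2^{s}\in I_2$ for $0\le k\le 2^{s}-1$, the shifts $\psi^{(0)}(\cdot-k/2^{s})$ are distinct elements of Kozyrev's orthonormal system, hence the expansion (\ref{m=101}) has uniquely determined coefficients, which forces $\alpha_k$ to coincide with the coefficients produced by Theorem~\ref{p-th4} (taken with the same support parameter $s$) and hence to be of the form (\ref{m=108}). With that remark added, your argument is complete modulo the cited theorem.
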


According to the general wavelet theory, all dilations and shifts of
wavelet functions (\ref{m=101}), (\ref{m=108}) or (\ref{p-101**}),
(\ref{p-101***}) form a {\em $p$-adic orthonormal Haar wavelet basis} in $L^2(\bQ_p)$:
\begin{equation}
\label{p-101**-basis}
\psi_{k;j a}(x)=p^{j/2}\psi_{k}(p^{-j}x-a), \quad k\in \bF_{p}^{\times}, \, j\in \bZ, \, a\in I_p.
\end{equation}
In particular, Kozyrev's wavelet basis in $L^2(\bQ_p)$ generated by wavelet functions (\ref{m=62.0})
is the following
$$
\psi^{(0)}_{k;j a}(x)=p^{j/2}\psi^{(0)}_{k}(p^{-j}x-a)
\qquad\qquad\qquad\qquad\qquad\qquad\qquad\qquad\qquad\qquad
$$
\begin{equation}
\label{62.0-1=}
=p^{j/2}\chi_p\Big(\frac{k}{p}(p^{-j}x-a)\Big)\Omega\big(|p^{-j}x-a|_p\big),
\quad k\in \bF_{p}^{\times}, \, j\in \bZ, \, a\in I_p.
\end{equation}
The wavelet functions (\ref{m=62.0}) can be expressed in terms of the {\it refinable function}
$\phi(x)=\Omega\big(|x|_p\big)$:
$$
\psi^{(0)}_{k}(x)
=\sum_{r=0}^{p-1}
e^{2\pi i \frac{kr}{p}}\phi\Big(\frac{1}{p}x-\frac{r}{p}\Big),
\quad k\in \bF_{p}^{\times}, \quad x\in \bQ_p.
$$

It was proved in~\cite{Al-Ev-Sk} that there are no orthogonal MRA based wavelet
bases except for those described in Theorem~\ref{p-th4} and Corollary~\ref{m=th4}.

To construct adelic wavelet bases, we will need the $p$-adic wavelet bases
that contain {\it the function $\phi(x)$}. Therefore, taking into account the second
decomposition (\ref{61.1-ad}), instead of the $p$-adic wavelet basis (\ref{p-101**-basis})
in $L^2(\bQ_p)$, we consider the following wavelet basis
\begin{equation}
\label{62.0-1=111}
\begin{array}{rcl}
\displaystyle
\phi(x-a)\in V_0, && a\in I_p, \\
\displaystyle
\psi_{k;j a}(x)=p^{j/2}\psi_{k}(p^{-j}x-a)\in W_j, && k\in \bF_{p}^{\times}, \, j\in \bZ_{+}, \, a\in I_p, \\
\end{array}
\end{equation}
where the wavelet functions $\psi_{k}$, $k\in \bF_{p}^{\times}$, are given by (\ref{p-101**})--(\ref{p-101***}).
In particular, we have the following wavelet basis
\begin{equation}
\label{62.0-1=111-K}
\begin{array}{rcl}
\displaystyle
\phi(x-a)\in V_0, & a\in I_p, \\
\displaystyle
\psi^{(0)}_{k;j a}(x)=p^{j/2}\chi_p\Big(\frac{k}{p}(p^{-j}x-a)\Big)\Omega\big(|p^{-j}x-a|_p\big)\in W_j,
& k\in \bF_{p}^{\times}, \, j\in \bZ_{+}, \, a\in I_p. \\
\end{array}
\end{equation}
The fact that bases (\ref{62.0-1=111}) and (\ref{62.0-1=111-K})
contain the {\it refinable function} $\phi(x)=\Omega\big(|x|_p\big)$ plays a crucial role in
our construction (see below Sec.~\ref{s5}). Bases (\ref{62.0-1=111}), (\ref{62.0-1=111-K})
will be called {\em modified Haar type} bases.

Taking into account the second decomposition (\ref{61.1-ad}), we obtain the following statement.
\begin{Proposition}
\label{lem-zz-1}
The restriction of basis {\rm(\ref{62.0-1=111-K})} on $\bZ_p$ constitutes
an orthonormal basis in $L^2(\bZ_p,dx^0_p)$:
\begin{equation}
\label{62.0-1=zz}
\begin{array}{rcl}
\displaystyle
\phi(x)&=&\Omega\big(|x|_p\big), \medskip \\
\displaystyle
{\widetilde\psi}^{(0)}_{k;j a}(x)&=&p^{j/2}\psi_{k}(p^{-j}x-a)\Omega\big(|p^{-j}x-a|_p\big)\big|_{\bZ_p},
\, k\in \bF_{p}^{\times}, \, a\in I_p^j, \, j\in \bZ_{+},\\
\end{array}
\end{equation}
where $dx^0_p=dx_p\big|_{\bZ_p}$ and $I_p^j=\{a\in I_p: p^ja\in \bZ_p\}$.
\end{Proposition}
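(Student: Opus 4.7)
The plan is to exploit the ultrametric dichotomy in $\bQ_p$ to show that the orthonormal basis (\ref{62.0-1=111-K}) of $L^2(\bQ_p)$ splits into two disjoint orthonormal families: those supported in $\bZ_p$ and those vanishing identically on $\bZ_p$. The first family is exactly the system displayed in the proposition, and once this split is established, the claim follows by isolating the orthogonal complement of $L^2(\bZ_p)\subset L^2(\bQ_p)$.

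First I would observe that the support of $\psi^{(0)}_{k;ja}$ is the ball $B_{-j}(p^j a)=\{x\in\bQ_p:|x-p^j a|_p\le p^{-j}\}$, whose diameter $p^{-j}$ is at most $1$ for $j\in\bZ_{+}$, and that $\phi(\cdot-a)$ has support $B_0(a)$. Since any two $p$-adic balls are either nested or disjoint, and $\bZ_p=B_0(0)$, every ball of diameter $\le 1$ is either contained in $\bZ_p$ or disjoint from it. Hence each element of basis (\ref{62.0-1=111-K}) either is supported in $\bZ_p$ or vanishes on $\bZ_p$.

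Next I would identify which elements fall into the first class. The inclusion $B_{-j}(p^j a)\subseteq\bZ_p$ is equivalent to $p^j a\in\bZ_p$, i.e.\ $a\in I_p^j$; for the level-zero generators $\phi(\cdot-a)$, the condition $B_0(a)\subseteq\bZ_p$ combined with $a\in I_p$ forces $a=0$, because $I_p\cap\bZ_p=\{0\}$ by (\ref{62.0**}). This yields precisely the indexing set described in the statement.

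To finish, given $f\in L^2(\bZ_p, dx^0_p)$, extend it by zero to $\widetilde f\in L^2(\bQ_p, dx_p)$ and expand $\widetilde f$ in (\ref{62.0-1=111-K}) as $\widetilde f=\widetilde f_{\rm in}+\widetilde f_{\rm out}$, where each summand is the orthogonal projection onto the closed span of the corresponding family. Then $\widetilde f_{\rm in}$ is supported in $\bZ_p$ and $\widetilde f_{\rm out}$ in $\bQ_p\setminus\bZ_p$; since $\widetilde f$ vanishes outside $\bZ_p$ we obtain $\widetilde f_{\rm out}=0$, so $\widetilde f=\widetilde f_{\rm in}$ lies in the closed linear span of the inside family. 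Its orthonormality in $L^2(\bZ_p, dx^0_p)$ is inherited from (\ref{62.0-1=111-K}) since all relevant integrals over $\bQ_p$ reduce to integrals over $\bZ_p$ against $dx^0_p=dx_p|_{\bZ_p}$. The only nontrivial point is the ultrametric dichotomy invoked in step one; everything else is routine bookkeeping, so no serious obstacle is expected.
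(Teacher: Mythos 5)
Your proof is correct, and it reaches the proposition by a somewhat cleaner and more complete route than the paper's own argument. Where you invoke the ultrametric dichotomy (each wavelet $\psi^{(0)}_{k;ja}$ with $j\in\bZ_{+}$ has support a ball $B_{-j}(p^{j}a)$ of radius $p^{-j}\le 1$, hence is either contained in $\bZ_p=B_0(0)$ or disjoint from it, the former exactly when $p^{j}a\in\bZ_p$, and $\phi(\cdot-a)$ survives only for $a=0$ since $I_p\cap\bZ_p=\{0\}$), the paper instead writes out the canonical digit expansions of $x\in\bZ_p$ and $a\in I_p$ and checks by hand that $p^{-j}x-a\in\bZ_p$ forces the negative-power digits of $a$ to match those of $p^{-j}x$, i.e.\ $p^{j}a\in\bZ_p$; the two arguments identify the same index set $I_p^{j}$, but yours isolates the geometric reason (nested-or-disjoint balls) in one line. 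For orthonormality both proofs use the same observation: once the surviving functions are known to be supported in $\bZ_p$, their $L^2(\bZ_p,dx^0_p)$ inner products coincide with their $L^2(\bQ_p,dx_p)$ inner products, so orthonormality is inherited from the basis (\ref{62.0-1=111-K}). The genuine added value of your write-up is the completeness step: extending $f\in L^2(\bZ_p)$ by zero, expanding in (\ref{62.0-1=111-K}), and noting that the ``outside'' component is supported both in $\bZ_p$ and in its complement, hence vanishes, so $f$ lies in the closed span of the restricted family. The paper never argues totality explicitly; it relies tacitly on the decomposition (\ref{61.1-ad}) restricted to $\bZ_p$, so your zero-extension argument fills in a point the paper leaves implicit.
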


\begin{proof}
It is easy to see that the restriction of Kozyrev's wavelets (\ref{62.0-1=}) is the following
$$
{\widetilde\psi}^{(0)}_{k;j a}(x)=p^{j/2}\chi_p\Big(\frac{k}{p}(p^{-j}x-a)\Big)
\Omega\big(|p^{-j}x-a|_p\big)\Big|_{\bZ_p}
\qquad\quad\qquad\qquad\qquad\quad
$$
$$
=\left\{
\begin{array}{lll}
p^{j/2}\chi_p\big(\frac{k}{p}(p^{-j}x-a)\big)\Omega\big(|p^{-j}x-a|_p\big), && k\in \bF_{p}, j\in \bZ_{+}, a\in I_p^j, \\
p^{j/2}\phi(x), && j \le -1, \, a=0,  \\
0, && j \le -1, \, a\ne 0.\\
\end{array}
\right.
$$
For $j\in \bZ_{+}$, we have ${\widetilde\psi}^{(0)}_{k;j a}(x)\big|_{\bZ_p}\ne 0$ only if
$|p^{-j}x-a|_p\le 1$. Let $a=a_{-\gamma}p^{-\gamma}+\cdots+a_{-1}p^{-1}$ and $x=\sum_{k=0}^{\infty}x_kp^k$.
Then we have
$$
p^{-j}x-a=\big(x_{0}p^{-j}+\cdots+x_{j-1}p^{-1}\big)-
\big(a_{-\gamma}p^{-\gamma}+\cdots+a_{-1}p^{-1}\big)
+\sum_{k=j}^{\infty}x_kp^{k-j}.
$$
Since $p^{-j}x-a\in \bZ_p$, the latter relation implies that
$x_{0}p^{-j}+\cdots+x_{j-1}p^{-1}=a_{-\gamma}p^{-\gamma}+\cdots+a_{-1}p^{-1}$.
Hence $p^{j}a\in \bZ_p$.

Let us calculate the scalar product
$$
\big({\widetilde\psi}^{(0)}_{k;j a}, {\widetilde\psi}^{(0)}_{k';j' a'}\big)
\qquad\qquad\qquad\qquad\qquad\qquad\qquad\qquad\qquad\qquad\qquad\qquad\qquad\qquad
$$
$$
=p^{(j+j')/2}\int_{\bZ_p}\chi_p\Big(\frac{k}{p}(p^{-j}x-a)\Big)
\chi_p\Big(-\frac{k'}{p}(p^{-j'}x-a)\Big)\times
\qquad\qquad
$$
$$
\qquad\qquad\qquad
\times
\Omega\big(|p^{-j}x-a|_p\big)\Omega\big(|p^{-j'}x-a'|_p\big)\,dx, \quad
k,k'\in \bF_{p}, j,j'\in \bZ_{+}, a,a'\in I_p.
$$
Here $\big({\widetilde\psi}^{(0)}_{k;j a}, {\widetilde\psi}^{(0)}_{k';j' a'}\big)\ne 0$ only if
$|p^{-j}x-a|_p\le 1$ and $|p^{-j'}x-a'|_p\le 1$. Just as above, we conclude that
$a,a'\in I_p$ are such that $p^{j}a\in \bZ_p$, $p^{j'}a'\in \bZ_p$. Thus
for $a\in I_p^j$, $a'\in I_p^{j'}$, we have
${\rm supp}\ \Omega\big(|p^{-j}x-a|_p\big), \,\, {\rm supp}\ \Omega\big(|p^{-j'}x-a'|_p\big)\in \bZ_p$.
Now, taking into account orthonormality of wavelet functions $\psi^{(0)}_{k;j a}$, $\psi^{(0)}_{k';j' a'}$
in $\bQ_p$, one can conclude that
$\big({\widetilde\psi}^{(0)}_{k;j a}, {\widetilde\psi}^{(0)}_{k';j' a'}\big)=\delta_{k k'}\delta_{jj'}\delta_{a a'}$,
where $\delta_{ss'}$ is the Kronecker symbol.
\end{proof}

Using Proposition~\ref{lem-zz-1} and Theorem~\ref{p-th4}, one can prove the following statement.
\begin{Proposition}
\label{lem-zz-1-2}
The restriction of the basis {\rm(\ref{62.0-1=111})} to $\bZ_p$ constitutes
in $L^2(\bZ_p,dx^0_p)$ the orthonormal basis
\begin{equation}
\label{62.0-1=zz-11}
\begin{array}{rcl}
\displaystyle
\phi(x)&=&\Omega\big(|x|_p\big), \medskip \\
\displaystyle
{\widetilde\psi}_{k;j a}(x)&=&p^{j/2}\psi_{k}(p^{-j}x-a)\big|_{\bZ_p},
\, k\in \bF_{p}^{\times}, \, a\in I_p^{(j)}, \, j\in \bZ_{+}, \\
\end{array}
\end{equation}
where the wavelet functions $\psi_{k}$, $k\in \bF_{p}^{\times}$, are given by
{\rm(\ref{p-101**})--(\ref{p-101***})},
and the set $I_p^{(j)}$ is defined in Remark~{\rm\ref{r.restrict}}.
\end{Proposition}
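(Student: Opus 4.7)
The plan is to follow the template of Proposition~\ref{lem-zz-1}, now using the general wavelet functions $\psi_k$ of Theorem~\ref{p-th4} in place of Kozyrev's $\psi^{(0)}_k$. The three tasks are: (a) identify the set $I_p^{(j)}$ of shifts $a\in I_p$ for which $\widetilde\psi_{k;ja}\not\equiv 0$ on $\bZ_p$; (b) prove orthonormality of the resulting system in $L^2(\bZ_p,dx^0_p)$; (c) prove completeness. The only structural novelty compared with Proposition~\ref{lem-zz-1} is that $\psi_k$ is now supported in the larger ball $B_s(0)$ rather than in $\bZ_p=B_0$.

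For (a), since $\supp\psi_k\subseteq B_s(0)$, the function $\psi_{k;ja}(x)=p^{j/2}\psi_k(p^{-j}x-a)$ has support in the ball $B_{s-j}(p^j a)=\{x:|x-p^j a|_p\le p^{s-j}\}$. By the strong triangle inequality any two $p$-adic balls are either nested or disjoint, so I only have to decide when $B_{s-j}(p^j a)$ meets $\bZ_p$; tracing through the cases (split according to whether $s-j\le 0$ or $s-j>0$ and to the size of $|p^j a|_p$ relative to $\max(1,p^{s-j})$) yields the set $I_p^{(j)}$ of Remark~\ref{r.restrict}, and the first paragraph of the proof of Proposition~\ref{lem-zz-1} adapts verbatim to show $\widetilde\psi_{k;ja}\equiv 0$ whenever $a\notin I_p^{(j)}$.

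For (b), whenever $\supp\psi_{k;ja}\subseteq\bZ_p$ one has $\widetilde\psi_{k;ja}=\psi_{k;ja}$, so orthonormality is inherited directly from the orthonormality of the system (\ref{p-101**-basis}) in $L^2(\bQ_p)$ supplied by Theorem~\ref{p-th4}. For the remaining admissible pairs I substitute the expansion (\ref{p-101**}) into the definition of $\psi_{k;ja}$ and rewrite it as
\[
\psi_{k;ja}(x)=\sum_{\nu=1}^{p-1}\sum_{l=0}^{p^s-1}\alpha_{\nu;l}^{k}\,\psi^{(0)}_{\nu;\,j,\,a+l/p^s}(x),
\]
and then compute the scalar product $\bigl(\widetilde\psi_{k;ja},\widetilde\psi_{k';j'a'}\bigr)_{L^2(\bZ_p)}$ by restricting this expansion term by term. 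Proposition~\ref{lem-zz-1} delivers the scalar products of the restricted Kozyrev wavelets $\widetilde\psi^{(0)}_{\nu;j,b}$, and what remains is an algebraic computation on the coefficient array $(\alpha_{\nu;l}^{k})$; its partial-isometry structure (forced by the orthonormality of the full unrestricted system in $L^2(\bQ_p)$) supplies the unit norm and the Kronecker orthogonality.

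For (c), every $f\in L^2(\bZ_p)\subset L^2(\bQ_p)$ admits a basis expansion
\[
f=\sum_{a\in I_p}c_a\,\phi(\cdot-a)+\sum_{j\in\bZ_+}\sum_{k\in\bF_p^{\times}}\sum_{a\in I_p}c_{k;ja}\,\psi_{k;ja}
\]
in (\ref{62.0-1=111}). Multiplying pointwise by the indicator of $\bZ_p$ kills $\phi(\cdot-a)|_{\bZ_p}$ for $a\in I_p\setminus\{0\}$ (since $|a|_p>1$ forces $B_0(a)\cap\bZ_p=\emptyset$) and kills $\psi_{k;ja}|_{\bZ_p}$ for $a\notin I_p^{(j)}$ by step (a); the surviving terms express $f$ as a linear combination of the proposed basis (\ref{62.0-1=zz-11}), proving completeness. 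The principal obstacle is step (b) in the regime where $B_{s-j}(p^j a)$ strictly contains $\bZ_p$: one must keep track of which shifted Kozyrev components of the expansion survive restriction and check that the corresponding truncation of $(\alpha_{\nu;l}^{k})$ still acts isometrically when paired with the Kozyrev-restriction basis of Proposition~\ref{lem-zz-1}.
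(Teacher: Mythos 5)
Your steps (a) and (c) are sound: the support analysis (for $j\ge s$ the dilated support $B_{s-j}(p^ja)$ has radius $p^{s-j}\le 1$, hence lies in $\bZ_p$ or is disjoint from it), and the totality argument obtained by applying the projection $f\mapsto f\,\Omega(|\cdot|_p)$ to the $L^2(\bQ_p)$-expansion of an $f$ supported in $\bZ_p$, both go through; in the range $j\ge s$ each nonzero restriction coincides with the unrestricted wavelet, so orthonormality there is inherited exactly as in Proposition~\ref{lem-zz-1}. The genuine gap is precisely at the point you yourself call ``the principal obstacle'', namely step (b) in the regime $0\le j<s$, where $B_{s-j}(p^ja)$ strictly contains $\bZ_p$. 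There the restriction keeps only part of the Kozyrev expansion of $\psi_{k;ja}$, and orthonormality of the unrestricted system only says that the \emph{full} coefficient array $(\alpha^{k}_{\nu;l})$ acts isometrically; a truncation of an isometry is in general a strict contraction, so no ``partial-isometry structure'' is forced, and the asserted unit norms and Kronecker relations do not follow. Indeed they can fail: take $p=2$, $s=1$, $\gamma_0=1$, $\gamma_1=i$ in Corollary~\ref{m=th4}, so $\psi=\alpha_0\psi^{(0)}+\alpha_1\psi^{(0)}(\cdot-\tfrac12)$ with $|\alpha_0|^2=|\alpha_1|^2=\tfrac12$. For $j=0$, the restriction of $\psi(x)$ to $\bZ_2$ is $\alpha_0\psi^{(0)}(x)$, of norm $2^{-1/2}\ne 1$, while the restriction of $\psi(x-\tfrac12)$ is $\alpha_1\psi^{(0)}(x-1)=-\alpha_1\psi^{(0)}(x)$; these two nonzero restrictions are parallel, not orthogonal. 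So the algebraic verification you defer cannot be carried out as stated: either the claim must be confined to $j\ge s$ (where your argument works verbatim), or the restricted family must be renormalized and pruned of linearly dependent elements, which is a different statement.

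For comparison, the paper offers no proof of this proposition at all -- it only asserts that it ``can be proved'' from Proposition~\ref{lem-zz-1} and Theorem~\ref{p-th4}, and Remark~\ref{r.restrict} leaves $I_p^{(j)}$ described only implicitly. So the difficulty you identified is real and is not resolved in the paper either; your write-up is honest in flagging it, but as it stands the orthonormality step is a gap, not a proof.
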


\begin{Remark}
\label{r.restrict}
We note that the set $I_p^{(j)}$, for which the restricted functions
${\widetilde\psi}_{k;j a}(x)$, $a\in I_p^{(j)}$ are nonzero, is finite and its
description is similar to the description of the set $I_p^j$, though more complicated.
For other wavelet bases, we will have a similar situation.
\end{Remark}

Consider the orthogonal projection
\begin{equation}
\label{P^[m]-1}
P^{[0]}:L^2({\bQ}_p,dx_p)\mapsto L^2({\bZ}_p,dx^0_p),
\end{equation}
given by
\begin{equation}
\label{P^[m]-2}
L^2({\bQ}_p,dx_p)\ni f(x)\mapsto
(P^{[0]}f)(x) =f(x)\Omega\big(|x|_p\big)\in
L^2({\bZ}_p,dx^0_p).
\end{equation}
In our case, the projections of
{\em modified Haar} bases (\ref{62.0-1=111}) and (\ref{62.0-1=111-K}) to the subspace
$L^2({\bZ}_p,dx^0_p)$ are also bases in $L^2({\bZ}_p,dx^0_p)$, where $dx^0_p$ is
the restriction of the Haar measure $dx_p$ on ${\bQ}_p$ to ${\bZ}_p$.
We note that the projections of some basis elements become zeros.

\begin{Proposition}
\label{lem-zz-1-00}
The restriction of the $p$-adic Haar MRA {\rm(}given by formula {\rm(\ref{m=62.0-2-vv-ad})}{\rm)}
to the space $L^2(\bZ_p)$ consists of the spaces
\begin{equation}
\label{m=62.0-2-vv-ad-zz}
{\widetilde V}_j=V_j\big|_{\bZ_p}
=\overline{{\rm span}\big\{p^{j/2}\phi\big(p^{-j}\cdot-a\big), \, x\in \bZ_p: a\in I_p\big\}},
\quad j\in \bZ_{+},
\end{equation}
where $V_j\big|_{\bZ_p}={\widetilde V}_0={\rm span}\big\{\phi(\cdot)\big\}$ for all
$j \le -1$, $\phi(x)=\Omega\big(|x|_p\big)$, and
$$
{\widetilde V}_0\subset {\widetilde V}_{1}\subset {\widetilde V}_{2}\subset\cdots.
$$
\end{Proposition}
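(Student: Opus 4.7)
The plan is to reduce every claim to a geometric computation of the intersection of the balls $B_{-j}(p^j a)$ with $\bZ_p$. The key observation is that each generator $p^{j/2}\phi(p^{-j}x-a)$ of $V_j$, with $\phi(x)=\Omega(|x|_p)$, is (up to the factor $p^{j/2}$) the characteristic function of $B_{-j}(p^j a)=\{x:|x-p^j a|_p\le p^{-j}\}$, since $|p^{-j}x-a|_p=p^j|x-p^j a|_p$. Because the restriction map $L^2(\bQ_p)\to L^2(\bZ_p)$, $f\mapsto f\cdot\Omega(|x|_p)$, is norm-decreasing and therefore continuous, it carries the closed span of the generators of $V_j$ onto the closed span of their restrictions; so it suffices to describe $B_{-j}(p^j a)\cap\bZ_p$ for every $a\in I_p$, which is controlled by the ultrametric trichotomy (any two balls are either disjoint or one contains the other).

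For $j\in\bZ_+$ the radius $p^{-j}\le 1$, so $B_{-j}(p^j a)$ either lies inside $\bZ_p$ or is disjoint from it. Containment is equivalent to $p^j a\in\bZ_p$, i.e.\ to $|a|_p\le p^j$. Expanding $a\in I_p$ in canonical form shows this to be exactly the condition $a\in I_p^j$ appearing in Proposition~\ref{lem-zz-1}; the corresponding balls partition $\bZ_p$ into $p^j$ disjoint balls of radius $p^{-j}$, while the restrictions for $a\notin I_p^j$ vanish. This immediately yields formula~(\ref{m=62.0-2-vv-ad-zz}).

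For $j\le -1$ the radius $p^{-j}\ge p>1$, so the alternatives are that $B_{-j}(p^j a)$ contains $\bZ_p$ (and then its restriction is the constant $p^{j/2}$ on $\bZ_p$, i.e.\ a scalar multiple of $\phi$) or is disjoint from it. Containment requires $|p^j a|_p\le p^{-j}$, i.e.\ $|a|_p\le 1$, which for $a\in I_p$ forces $a=0$; for every nonzero $a\in I_p$ one has $|a|_p\ge p$ and hence $|p^j a|_p=p^{-j}|a|_p\ge p^{1-j}>p^{-j}$, so such contributions vanish. Consequently $V_j|_{\bZ_p}={\rm span}\{\phi\}=\widetilde V_0$ for all $j\le -1$. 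Finally, the nesting $\widetilde V_0\subset\widetilde V_1\subset\cdots$ is inherited from the MRA axiom $V_j\subset V_{j+1}$ by restriction, and can also be seen directly from the fact that each radius-$p^{-j}$ ball in the partition of $\bZ_p$ splits into $p$ disjoint balls of radius $p^{-(j+1)}$. The entire argument is conceptually routine; the only step requiring care is the bookkeeping of the $p$-adic norm inequalities, which presents no serious obstacle.
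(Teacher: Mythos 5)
Your proof is correct and is essentially the argument the paper leaves implicit: Proposition~\ref{lem-zz-1-00} is stated there without a separate proof, resting on the same support computation (the ball of radius $p^{-j}$ centred at $p^ja$ meets $\bZ_p$ nontrivially only when $p^ja\in\bZ_p$, i.e.\ $a\in I_p^j$, and for $j\le -1$ only when $a=0$) already carried out in the proof of Proposition~\ref{lem-zz-1}, together with the projection (\ref{P^[m]-2}). The one step to phrase carefully is your appeal to continuity: that alone only gives that $P^{[0]}(V_j)$ is contained in the closed span of the restricted generators, but equality and closedness of the image do follow in this situation because, as your ultrametric dichotomy shows, each generating ball is either contained in $\bZ_p$ or disjoint from it, so on $V_j$ the map $P^{[0]}$ coincides with the orthogonal projection onto the closed span of the generators with $a\in I_p^j$.
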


Here according to {\rm(\ref{61-ad})--(\ref{61.1-ad})},
\begin{equation}
\label{61.1-ad-zz}
L^2(\bZ_p)={\widetilde V}_0\bigoplus \Big({\bigoplus\limits_{j\in\bZ_{+}}{\widetilde W}_j}\Big),
\end{equation}
where
$$
{\widetilde W}_j={\widetilde V}_{j+1}\ominus {\widetilde V}_j
\qquad\qquad\qquad\qquad\qquad\qquad\qquad\qquad\qquad\qquad\qquad
$$
$$
=\overline{{\rm span}\big\{p^{j/2}\psi_{k}(p^{-j}x-a)\big|_{\bZ_p}: k\in \bF_{p}^{\times}, \, a\in I_p\big\}},
\quad j\in \bZ_{+},
$$
and the wavelet functions $\psi_{k}$, $k\in \bF_{p}^{\times}$, are given by (\ref{p-101**})--(\ref{p-101***}).
In particular, we can use the wavelet functions (\ref{62.0-1=111-K}).

\section{Basis in the space $L^2({\bA},dx)$}\label{s4}

\subsection{Infinite tensor product of Hilbert spaces.}\label{s4.1}
We recall~\cite{Neu} (see also~\cite[Ch.1, \S 2.3]{Ber86}) the
definition of {\it infinite tensor product
${\cH}_e=\bigotimes\limits_{e,n\in {\bN}}H_n$
of Hilbert spaces} $H_n,\,\,n\in {\bN}$. Fix the sequence
\begin{equation}
\label{vect_stab} e=(e^{(n)})_{n=1}^\infty,\,\,e^{(n)}\in
H_n,\,\,\Vert e^{(n)}\Vert_{H_n}=1,
\end{equation}
called a {\it stabilizing sequence}. Denote by  $E$ the {\it set
of all stabilizing sequences}. Fix an {\it orthonormal basis}
(o.n.b) $(e^{(n)}_k)_{k\in {\bN}}$ in the Hilbert space $H_n$
such that $e^{(n)}=e^{(n)}_1,\,\,n\in {\bN}$. Let $\Lambda$ be
the set of multi-indices $\alpha=(\alpha_n)_{n\in {\bN}},$
$\alpha_n\in{\bN},\,\,n\in {\bN} $ such that $\alpha_n=1$ for sufficiently big
$n$ depending on $\alpha$.

By definition, the o.n.b. of the space
$\otimes_{e,n\in {\bN}}H_n$ {\it with the stabilizing sequence $e$} consists
of all vectors $(e_\alpha)_{\alpha\in \Lambda}$ of the following form
\begin{equation}
\label{basis-tensor}
e_\alpha=e^{(1)}_{\alpha_1}\otimes e^{(2)}_{\alpha_2}
\otimes\cdots\otimes e^{(n)}_{\alpha_n}\otimes
e^{(n+1)}\otimes\cdots, \,\,\alpha\in \Lambda
\end{equation}
where $\alpha_n=1$ for sufficiently big $n$ depending on $\alpha$. An
arbitrary element $f$ of the space ${\cH}_e$ has the following form:
\begin{equation}
\label{vector} f=\sum_{\alpha\in \Lambda}f_\alpha
e_\alpha,\,\,\text{with}\quad \Vert f \Vert^2_{{\cH}_e}
=\sum_{\alpha\in \Lambda}\vert f \vert ^2_\alpha<\infty
\end{equation}
and the scalar product $(f,g)_{{\cH}_e}$ of two vectors
$f,g\in {\cH}_e$ has the following form:
\begin{equation}
\label{(.,.)} (f,g)_{{\cH}_e}=\sum_{\alpha\in
\Lambda}f_\alpha\overline{g_\alpha}.
\end{equation}

It will often be convenient for us (see \cite[Ch.1,\S\,\,2.3]{Ber86})
to represent the set $\Lambda$ as the union of
disjoint sets, each consisting of ``finite'' sequences. Namely, for
an $\alpha\in \Lambda$ let  $\nu(\alpha)$ denote the minimal
$m=1,2,\dots$ such that $\alpha_{m+1}=\alpha_{m+2}=\cdots =1$.
Let
\begin{equation}
\label{Lambda_n}
\Lambda_n=\{\alpha\in \Lambda: \nu(\alpha)=n\},\quad n\in\bN.
\end{equation}
Obviously, $\Lambda_n\bigcap\Lambda_m=\varnothing$ ($n\not=m$) and
$\Lambda=\bigcup_{n=1}^\infty\Lambda_n$.

\begin{Example} \rm
(\cite[Ch.1, \S\,2.3, example 1]{Ber86})
Let $L^2(X_k,\mu_k)$, $k\in{\bN}$ be the space of square
integrable complex functions on the measurable space $X_k$ with a
probability measure $\mu_k$. Choose the stabilizing sequence
$e=(e^{(k)})_{k=1}^\infty$, where $e^{(k)}(x)\equiv 1,\,x\in X_k$, $k\in {\bN}$.
In this case we have
\end{Example}
\begin{Theorem}
\label{t.L2-iso-He1} The following two spaces are isomorphic:
\begin{equation}
\label{L2-iso-He1}
\bigotimes\limits_{e,\,k\in {\bN}}L^2(X_k,\mu_k)\cong
L^2\Big(\prod_{k\in {\bN}}X_k,\otimes_{k\in {\bN}}\mu_k\Big).
\end{equation}
\end{Theorem}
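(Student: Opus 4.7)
The plan is to define the isomorphism explicitly on orthonormal bases and then argue completeness on the product-space side.

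First, I would fix an orthonormal basis $(e_k^{(n)})_{k\in\bN}$ of each $L^2(X_n,\mu_n)$ with $e_1^{(n)}\equiv 1$ (this is possible since $\mu_n$ is a probability measure, so the constant function $1$ has norm $1$ and can be completed to an o.n.b.). Using the notation of (\ref{basis-tensor}), the basis of $\otimes_{e,n\in\bN}L^2(X_n,\mu_n)$ consists of the vectors $e_\alpha$ indexed by $\alpha\in\Lambda$. I would define the candidate isomorphism $U$ on the basis by
\[
U(e_\alpha)(x)=E_\alpha(x):=\prod_{n\in\bN}e^{(n)}_{\alpha_n}(x_n),\qquad x=(x_n)_{n\in\bN}\in\prod_{n\in\bN}X_n.
\]
Because $\alpha\in\Lambda$ means $\alpha_n=1$ for $n$ sufficiently large, this product has only finitely many non-constant factors, so $E_\alpha$ is a well-defined measurable function on $\prod_n X_n$ (a cylinder function).

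Second, I would verify that $\{E_\alpha\}_{\alpha\in\Lambda}$ is an orthonormal system in $L^2(\prod_n X_n,\otimes_n\mu_n)$. By Fubini's theorem applied to the product measure, and since all but finitely many factors reduce to $\int_{X_n}|1|^2\,d\mu_n=1$, we get
\[
(E_\alpha,E_\beta)=\prod_{n\in\bN}\int_{X_n}e^{(n)}_{\alpha_n}\overline{e^{(n)}_{\beta_n}}\,d\mu_n=\prod_{n\in\bN}\delta_{\alpha_n\beta_n}=\delta_{\alpha\beta}.
\]
Combined with (\ref{(.,.)}), this shows $U$ extends by linearity to an isometry from ${\cH}_e$ into $L^2(\prod_n X_n,\otimes_n\mu_n)$.

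Third, and this is the main obstacle, I must prove completeness of $\{E_\alpha\}$ in $L^2(\prod_n X_n,\otimes_n\mu_n)$. The strategy is a two-step density argument. Step (i): show that finite tensor products $f_1(x_1)\cdots f_N(x_N)\cdot 1\cdot 1\cdots$ with $f_j\in L^2(X_j,\mu_j)$ have dense linear span in $L^2(\prod_n X_n,\otimes_n\mu_n)$. This follows from the standard construction of the product measure: indicator functions of measurable rectangles $\prod_{j\le N}A_j\times\prod_{j>N}X_j$ generate the product $\sigma$-algebra, their linear span is $L^2$-dense, and each such indicator is a finite product of $L^2$-functions of single variables. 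Step (ii): each such finite tensor product is approximated in $L^2$-norm by finite linear combinations of the $E_\alpha$, obtained by expanding each $f_j$ in the basis $(e_k^{(j)})_k$ and truncating. Together these give density of $\{E_\alpha\}$, so $U$ is onto.

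The main technical point to be careful with is step (i): that tensor products of single-variable $L^2$-functions are dense on the product-measure side. Once this is established, the rest is routine: $U$ takes an o.n.b. to an o.n.b., hence extends to a unitary isomorphism, yielding (\ref{L2-iso-He1}).
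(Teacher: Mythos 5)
Your proof is correct. Note that the paper does not actually prove this statement: it is quoted as a known result from \cite{Ber86} (Ch.~1, \S 2.3, Example 1), and your argument --- sending the orthonormal basis $e_\alpha$ of ${\cH}_e$ to the cylinder functions $E_\alpha$, checking orthonormality by Fubini (using $\mu_n(X_n)=1$ for the infinitely many constant factors), and proving surjectivity from the $L^2$-density of indicators of measurable rectangles with all but finitely many factors equal to $X_j$ --- is precisely the standard proof behind that citation. You correctly identify the only non-formal step, the density claim in your step (i), which holds because such rectangles form a generating algebra for the product $\sigma$-algebra of the product probability measure.
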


\begin{Example}\rm
If in the previous example the first $m$ measures
$\mu_k$ are not necessarily probability, i.e., $\mu_k(X_k)=\infty$, then we get following
statement.
\end{Example}

\begin{Theorem}
\label{t.L2-iso-He2}
The following two spaces are isomorphic:
\begin{equation}
\label{L2-iso-He2}
L^2\Big(\prod_{k=1}^mX_k,\otimes_{k=1}^m\mu_k\Big)
\otimes\Big( \bigotimes\limits_{e,\,k=m+1}^\infty L^2(X_k,\mu_k)\Big)\cong
L^2\Big(\prod_{k\in {\bN}}X_k,\otimes_{k\in {\bN}}\mu_k\Big).
\end{equation}
\end{Theorem}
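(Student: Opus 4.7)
The plan is to reduce Theorem~\ref{t.L2-iso-He2} to Theorem~\ref{t.L2-iso-He1} by peeling off the first $m$ factors and treating them with the classical (finite-dimensional) tensor product of $L^2$-spaces, which does not require any probability assumption on the measures. The stabilizing sequence $e=(e^{(k)})_{k\in\bN}$ with $e^{(k)}\equiv 1$ is well-defined only for $k\ge m+1$ (where $\mu_k$ is a probability measure so $\|e^{(k)}\|=1$), and that is precisely why the first $m$ factors must be split off into an ordinary tensor product.

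First I would apply Theorem~\ref{t.L2-iso-He1} directly to the sequence $(X_k,\mu_k)_{k\ge m+1}$ of probability spaces with stabilizing sequence $(e^{(k)})_{k\ge m+1}$, obtaining
\begin{equation*}
\bigotimes_{e,\,k=m+1}^{\infty} L^2(X_k,\mu_k)\;\cong\; L^2\Big(\prod_{k\ge m+1}X_k,\,\otimes_{k\ge m+1}\mu_k\Big).
\end{equation*}
Next I would invoke the classical Hilbert space identification
\begin{equation*}
\bigotimes_{k=1}^{m} L^2(X_k,\mu_k)\;\cong\; L^2\Big(\prod_{k=1}^{m}X_k,\,\otimes_{k=1}^{m}\mu_k\Big),
\end{equation*}
which holds for arbitrary (possibly infinite) $\sigma$-finite measures, as this is a finite tensor product proved by choosing an orthonormal basis in each factor and verifying that pure tensors $f_1\otimes\cdots\otimes f_m\mapsto f_1(x_1)\cdots f_m(x_m)$ extends to a unitary isomorphism onto the product $L^2$ space.

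Combining the two identifications, the left-hand side of \eqref{L2-iso-He2} becomes $L^2(Y_1,\nu_1)\otimes L^2(Y_2,\nu_2)$, where $Y_1=\prod_{k=1}^m X_k$, $\nu_1=\otimes_{k=1}^m\mu_k$, $Y_2=\prod_{k\ge m+1}X_k$, and $\nu_2=\otimes_{k\ge m+1}\mu_k$. A second application of the classical two-factor identification $L^2(Y_1,\nu_1)\otimes L^2(Y_2,\nu_2)\cong L^2(Y_1\times Y_2,\nu_1\otimes\nu_2)$, together with the canonical measure-theoretic identification $(Y_1\times Y_2,\nu_1\otimes\nu_2)\cong(\prod_{k\in\bN}X_k,\otimes_{k\in\bN}\mu_k)$, yields the right-hand side of \eqref{L2-iso-He2}.

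The isomorphism is the unique continuous linear extension of
\begin{equation*}
\bigl(f_1\otimes\cdots\otimes f_m\bigr)\otimes\bigl(g_{m+1}\otimes g_{m+2}\otimes\cdots\bigr)\;\longmapsto\; f_1(x_1)\cdots f_m(x_m)\,g_{m+1}(x_{m+1})\,g_{m+2}(x_{m+2})\cdots,
\end{equation*}
where only finitely many $g_k$ differ from $e^{(k)}\equiv 1$; on such elementary tensors the norm equality is immediate from the Fubini-type factorization of the product measure. The only real point that needs attention is the consistency between the incomplete von Neumann product used on the left and the standard product measure on the right, but this is exactly the content of Theorem~\ref{t.L2-iso-He1}; once that is granted, nothing new has to be proved. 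Accordingly, I would not expect any substantive obstacle beyond bookkeeping: the work is in carefully composing three already-established unitary isomorphisms and verifying that the composite acts as stated on pure tensors.
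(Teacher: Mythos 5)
Your argument is correct, and it is essentially the route the paper intends: the paper states Theorem~\ref{t.L2-iso-He2} without proof, as an immediate consequence of Theorem~\ref{t.L2-iso-He1} (quoted from Berezanskii) applied to the probability-measure tail $k\ge m+1$, combined with the standard finite identification $L^2(Y_1,\nu_1)\otimes L^2(Y_2,\nu_2)\cong L^2(Y_1\times Y_2,\nu_1\otimes\nu_2)$ --- exactly your composition of unitaries acting on pure tensors. Your explicit remark that the first $m$ measures must be $\sigma$-finite for the two-factor identification (automatic for the Haar measures used later in the paper) is the only hypothesis worth recording, and you have handled it properly.
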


\begin{Remark}\rm
\label{A=lim-m-A-(m)}
Let us consider in the infinite tensor product space ${\cH}_e=\bigotimes\limits_{e,n\in {\bN}}H_n$
the infinite tensor product $A=\otimes_{n\in {\bN}}A_n$ of bounded  operators $A_n$ acting in
the space $H_n,\,\,n\in\bN$. By definition, an operator $A$ acts on the total set of well-defined
vectors $f=\otimes_{n\in {\bN}}f_n$ in the space ${\cH}_e$ (see Lemma \ref{lemma-111})
in the following way:
\begin{equation}
\label{Af:=in-H-e}
Af:=\otimes_{n\in {\bN}}A_nf_n=A_1f_1\otimes\cdots \otimes A_mf_m\otimes\cdots,
\end{equation}
if the latter expression is well-defined in ${\cH}_e$. In the sequel we {\em shall use the projections}
$P^{[m]}:{\cH}_e\rightarrow H^{(m)}$, where the subspaces $H^{[m]}\subset {\cH}_e$
are defined on the total family of vectors $f=\otimes_{n\in {\bN}}f_n$ as follows:
$$
{\cH}_e\ni f=\otimes_{n\in {\bN}}f_n\mapsto P^{[m]}f
:=\otimes_{n=1}^mf_n\otimes e^{(m+1)}\otimes e^{(m+2)}\otimes \cdots\in H^{[m]},
$$
and
$$
H^{[m]}=\bigotimes\limits_{n=1}^mH_n\otimes e^{(m+1)}\otimes e^{(m+2)}\otimes\cdots
$$
We shall denote by $ A_{(m)}$ the projections $P^{[m]}AP^{[m]}$ of the operator
$A$ in ${\cH}_e$ onto the space $H^{[m]}$, i.e. define $A_{(m)}:=P^{[m]}AP^{[m]}$. It is clear that
$$
A_{(m)}=\bigotimes\limits_{n=1}^mA_n\otimes\bigotimes\limits_{n>m} Id_n,
$$
where $Id_n$ is the identity operator in $H_n$. We have the strong convergence $A_{(m)}\!\to\! A$ as
$m\!\to\!\infty$ on a suitable set of vectors $f=\otimes_{n\in {\bN}}f_n$. It is sufficient to estimate
$\Vert (A-A_{(m)})f\Vert_{{\cH}_e}$. For details, see \cite[Ch. I, \S 2.7.]{Ber86}.

As usual, we denote by $A\vert_X$ {\em the restriction} of an operator $A$ acting in a Hilbert space $H$
to the invariant subspace $X\subset H$.
\end{Remark}

\subsection{Complete von Neumann product of infinitely many Hilbert spaces.}\label{s4.2}
The {\it complete von Neumann tensor product}
${\cH}=\otimes_{k\in {\bN}}H_k$  of Hilbert spaces
$H_k$, $k\in {\bN} $ is {\em by definition} the orthogonal sum of the
spaces ${\cH}_e$ (\cite{Neu}, see also \cite[Ch.1,\S \,2.10]{Ber86})
\begin{equation}
\label{complete} {\cH}=\bigoplus\limits_{e\in E_{/\sim}}{\cH}_e
\end{equation}
over all possible {\it equivalence classes} $E_{/\sim}$ of
stabilizing sequences $e$.

To be more precise, fix the space ${\cH}_e=\bigotimes\limits_{e,n\in {\bN}}H_n$. We
define the vector $f=\otimes_{n\in {\bN}}f^{(n)}$, where
$f^{(n)}\in H_{n}$, as the week limit (if it exists) in
${\cH}_e$ of the vectors
\begin{equation}
\label{f[m]} f[m]=f^{(1)}\otimes\cdots\otimes f^{(m)}\otimes
e^{(m+1)}\otimes e^{(m+2)}\otimes\cdots
\end{equation}
as $m\to\infty$. Since the set ${\rm span}\,\{e_\alpha :\alpha\in \Lambda\}$ is
dense in ${\cH}_e$, the week limit of the vectors $f[m]$ exists if and
only if: $1)$ the norms $\Vert f[m]\Vert_{{\cH}_e}$ are uniformly
bounded with respect to $m=1,2,\dots$, and
$2)\lim_{m\to\infty}(f[m],e_\alpha)_{{\cH}_e}$ exists for each
$\alpha\in \Lambda$. The following statements are proved in~\cite[Ch.I,\S \,2.10]{Ber86}.

\begin{Lemma}
\label{lemma-111}
The strong limit of vectors {\rm(\ref{f[m]})} exists in
${\cH}_e$, as $m\to\infty$, if and only if the product
$\prod_{n=1}^\infty\Vert f^{(n)}\Vert_{H_n}$ and
$\prod_{n=q}^\infty(f^{(n)},e^{(n)})_{H_n}$ ($q=1,2,\dots$) converge
to finite numbers, and we have $\prod_{n=1}^\infty\Vert f^{(n)}\Vert_{H_n}=0$ when
$\prod_{n=q}^\infty(f^{(n)},e^{(n)})_{H_n}=0$ for each $q$.
\end{Lemma}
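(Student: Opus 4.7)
The strong limit exists iff $(f[m])$ is Cauchy in $\cH_e$ by completeness, so the plan is to compute $\Vert f[m']-f[m]\Vert^2$ directly from the tensor-product structure and analyze when it tends to zero. Setting $a_n=\Vert f^{(n)}\Vert_{H_n}$, $b_n=(f^{(n)},e^{(n)})_{H_n}$, and $A_m=\prod_{n=1}^m a_n^2$, the factorization of inner products on simple tensors together with $\Vert e^{(n)}\Vert_{H_n}=1$ yields, for $m\le m'$,
\[
\Vert f[m]\Vert_{\cH_e}^2=A_m,\qquad (f[m'],f[m])_{\cH_e}=A_m\prod_{n=m+1}^{m'}b_n,
\]
and hence the identity
\[
\Vert f[m']-f[m]\Vert_{\cH_e}^2=A_{m'}+A_m-2A_m\,\mathrm{Re}\Big(\prod_{n=m+1}^{m'}b_n\Big).
\]

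For the sufficiency (``if'' direction) I would assume the stated conditions and split into two cases. If $\prod_{n=1}^\infty a_n=0$, the Cauchy--Schwarz inequality $|b_n|\le a_n$ gives $\bigl|\prod_{n=m+1}^{m'}b_n\bigr|\le\sqrt{A_{m'}/A_m}$, so the identity above is bounded by $(\sqrt{A_m}+\sqrt{A_{m'}})^2\to 0$. If $\prod_{n=1}^\infty a_n=A>0$, then condition (C), applied contrapositively, produces some $q_0$ with $\prod_{n=q_0}^\infty b_n\ne 0$; the Cauchy criterion for an infinite product with nonzero limit on the tail $n\ge q_0$ yields $\prod_{n=m+1}^{m'}b_n\to 1$, and the identity gives $\Vert f[m']-f[m]\Vert^2\to A^2+A^2-2A^2=0$.

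For the necessity (``only if'' direction), assume $f[m]\to f$. Norm continuity gives $A_m=\Vert f[m]\Vert^2\to\Vert f\Vert^2$, whence $\prod_{n=1}^\infty a_n$ converges. Inner-product continuity gives $(f[m'],f[m])\to\Vert f\Vert^2$; combined with $A_m\to\Vert f\Vert^2$ this shows, whenever $\Vert f\Vert>0$, that $\prod_{n=m+1}^{m'}b_n\to 1$, and the Cauchy criterion then provides $q_0$ with $\prod_{n=q_0}^\infty b_n$ convergent and nonzero. Convergence of $\prod_{n=q}^\infty b_n$ for every other $q$ follows by factoring out a finite initial or final block of $b_n$'s. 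Condition (C) is now the contrapositive of what has been shown: if all tail products vanish then necessarily $\Vert f\Vert^2=\lim A_m=0$.

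The delicate step, and the likely source of error, is the correct invocation of the Cauchy criterion for infinite products of complex numbers. The product-form criterion strictly guarantees convergence to a \emph{nonzero} limit; convergence to $0$ must be handled separately, which is why the regimes $\prod a_n=0$ and $\prod a_n>0$ are treated independently above, the former controlled purely by $|b_n|\le a_n$ and the latter by the product-form criterion. The degenerate case in which some $f^{(n_0)}$ is zero fits into the first regime, since then $A_m$ vanishes for $m\ge n_0$ and $f[m]$ is the zero vector thereafter.
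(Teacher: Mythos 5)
The paper never proves this lemma itself (it is quoted from von Neumann \cite{Neu} and Berezanskii \cite{Ber86}, Ch.~I, \S 2.10), so your argument stands on its own; it follows the standard route of those sources: factor inner products of elementary tensors to get $\Vert f[m]\Vert^2=A_m$ and $(f[m'],f[m])=A_m\prod_{n=m+1}^{m'}b_n$, expand $\Vert f[m']-f[m]\Vert^2$, and invoke the Cauchy criterion for infinite products. The sufficiency direction is complete and correct, including the careful separation of the regimes $\prod_n a_n=0$ (controlled by $|b_n|\le a_n$) and $\prod_n a_n>0$ (controlled by the product-form Cauchy criterion, which indeed only certifies nonzero limits), and the necessity of convergence of $\prod_n\Vert f^{(n)}\Vert_{H_n}$ and of condition (C) is also correctly obtained.

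There is, however, one genuine omission in the necessity direction: you establish convergence of the tail products $\prod_{n=q}^\infty(f^{(n)},e^{(n)})_{H_n}$ only when $\Vert f\Vert>0$. The lemma asserts their convergence whenever the strong limit exists, so the case $\Vert f\Vert=0$ must also be covered. When no $f^{(n)}$ vanishes this is a one-line addition using the bound you already have: $\bigl|\prod_{n=q}^{m}b_n\bigr|\le\prod_{n=q}^{m}a_n=\sqrt{A_m}/\sqrt{A_{q-1}}\to 0$, so every tail product converges (to $0$). When some $f^{(n_0)}=0$ the literal ``only if'' statement can actually fail --- take $f^{(n)}=2e^{(n)}$ for $n>n_0$, so that $f[m]=0$ for all $m\ge n_0$ and the strong limit trivially exists, while $\prod_{n=q}^{m}b_n=2^{\,m-q+1}$ diverges for $q>n_0$ --- so the lemma is implicitly read with all $f^{(n)}\ne 0$ (or with von Neumann's conventions on product convergence). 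Your closing remark about the degenerate case addresses only the sufficiency side; you should add the tail estimate above for the nonvanishing case and either state the convention or flag the degenerate case in the necessity direction as well.
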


\begin{Corollary}
\label{f[m]-stab} If $f^{(n)}$ in {\rm(\ref{f[m]})} are taken to be unit
vectors, then the strong limit $\lim_{m\to\infty}f[m]$ exists if and
only if for some $q=1,2,\dots$ the product
$\prod_{n=q}^\infty(f^{(n)},e^{(n)})_{H_n}$ converges to a finite
nonzero number.
\end{Corollary}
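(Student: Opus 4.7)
The plan is to deduce Corollary \ref{f[m]-stab} directly from Lemma \ref{lemma-111} by specializing to the case of unit vectors $f^{(n)}$, and then untangling the three conditions of the lemma.

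First I would observe that since $\|f^{(n)}\|_{H_n} = 1$ for every $n$, the product $\prod_{n=1}^{\infty}\|f^{(n)}\|_{H_n}$ trivially converges to the finite number $1$, which is nonzero. Thus the first convergence requirement in Lemma \ref{lemma-111} is automatic, and the third clause of the lemma (``$\prod_{n=1}^\infty\|f^{(n)}\|=0$ when $\prod_{n=q}^\infty(f^{(n)},e^{(n)})=0$ for each $q$'') forces the contrapositive statement: the strong limit can exist only if there exists some $q$ for which $\prod_{n=q}^\infty(f^{(n)},e^{(n)})_{H_n}\neq 0$.

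For the forward direction, suppose $\lim_{m\to\infty} f[m]$ exists strongly in $\cH_e$. By Lemma \ref{lemma-111}, all the products $\prod_{n=q}^\infty(f^{(n)},e^{(n)})_{H_n}$ converge to finite numbers, and by the contrapositive observation above, at least one of them, say for some $q_0$, must be nonzero. This is precisely the desired conclusion.

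For the converse, suppose some $q_0$ yields $\prod_{n=q_0}^\infty(f^{(n)},e^{(n)})_{H_n}$ convergent and nonzero. I would then verify the three conditions of Lemma \ref{lemma-111}. Condition (i) holds as noted. For condition (ii), I need convergence of $\prod_{n=q}^\infty(f^{(n)},e^{(n)})_{H_n}$ for \emph{every} $q$; for $q \geq q_0$ this follows because the nonzero convergence from $q_0$ forces each individual factor $(f^{(n)},e^{(n)})_{H_n}$ to be nonzero for $n\geq q_0$ and the tail product to converge, while for $q < q_0$ we write the product as the finite product $\prod_{n=q}^{q_0-1}(f^{(n)},e^{(n)})_{H_n}$ times the convergent tail, which manifestly converges. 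Finally, condition (iii) is vacuous: its premise requires $\prod_{n=q}^\infty(f^{(n)},e^{(n)})_{H_n}=0$ for \emph{every} $q$, which fails at $q=q_0$. Hence Lemma \ref{lemma-111} applies and the strong limit exists.

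I do not anticipate any real obstacle, as this is a direct logical specialization of the preceding lemma; the only care required is in reading condition (iii) correctly (as an implication ``if $\ldots$ then $\ldots$'') and in handling the two sub-cases $q\geq q_0$ and $q<q_0$ when verifying condition (ii).
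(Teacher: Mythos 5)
Your proposal is correct: the paper gives no independent argument for this corollary (it refers both Lemma~\ref{lemma-111} and the corollary to \cite[Ch.~I, \S\,2.10]{Ber86}), and the intended justification is exactly your specialization of Lemma~\ref{lemma-111} to unit vectors, with the third clause read as an implication. Your handling of the two cases $q\ge q_0$ and $q<q_0$ (using that no factor $(f^{(n)},e^{(n)})_{H_n}$ with $n\ge q_0$ can vanish when the tail product is nonzero) closes the only point needing care, so nothing further is required.
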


\begin{Definition}
\label{equivalence}\rm
(see \cite[ Ch. I,\,\S\,2.10, Theorem 2.9]{Ber86})
Consider the set $E$ of all stabilizing sequences
$e=(e^{(n)})_{n=1}^\infty$ of the form (\ref{vect_stab}). A stabilizing sequence
$l\in E$ is said {\em to be equivalent to a stabilizing sequence} $e\in E$ ($l\sim e$),
if each strong limit
$$
l^{(1)'}\otimes l^{(2)'}\otimes\cdots=\lim_{m\to\infty}l^{(1)'}\otimes\cdots
\otimes l^{(m)'}\otimes e^{(m+2)}\otimes e^{(m+2)}\otimes\cdots,
$$
exists in ${\cH}_e$, where $(l^{(n)'})_{k=1}^\infty$ is the
sequence $(l^{(n)})_{k=1}^\infty$, ``diluted'' by the vectors
$e^{(n)}$, i.e. each $l^{(n)'}$ is equal either to $l^{(n)}$ or to
$e^{(n)}$. The relation $\sim$ is an equivalence relation and we
denote by $E_{/\sim}$ the {\em set of all equivalent classes of $E$}.
\end{Definition}

\begin{Theorem}
\label{He-iso-He'}
Two infinite tensor products
${\cH}_e=\bigotimes\limits_{e,n\in {\bN}}H_n$ and
${\cH}_{l}=\bigotimes\limits_{l,n\in {\bN}}H_n$ corresponding to
two equivalent stabilizing sequences $e=(e^{(n)})_{n=1}^\infty$ and
$l=(l^{(n)})_{n=1}^\infty$ are isomorphic. For $e\not\sim l$, the
spaces  ${\cH}_e$ and ${\cH}_l$ are orthogonal.
\end{Theorem}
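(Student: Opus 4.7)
The plan splits into two parts. \textbf{Part 1} (isomorphism when $e \sim l$) is the substantive one; \textbf{Part 2} (orthogonality when $e \not\sim l$) is essentially a consistency check on the construction (\ref{complete}).

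\textbf{Part 1.} Fix orthonormal bases $(l^{(n)}_k)_{k \in \bN}$ of each $H_n$ with $l^{(n)}_1 = l^{(n)}$, and for each $\alpha \in \Lambda$ form the candidate basis vector $l_\alpha := l^{(1)}_{\alpha_1} \otimes l^{(2)}_{\alpha_2} \otimes \cdots$. First I would verify that $l_\alpha$ exists as a strong limit in $\cH_e$: by Corollary~\ref{f[m]-stab} this reduces to convergence of $\prod_{n \geq q}(l^{(n)}_{\alpha_n}, e^{(n)})_{H_n}$ to a finite nonzero number for some $q$, and since $\alpha_n = 1$ beyond some index $N_\alpha$, the tail of this product coincides with $\prod_{n > N_\alpha}(l^{(n)}, e^{(n)})_{H_n}$, which is exactly what the equivalence $l \sim e$ supplies (via the same Corollary applied to the ``undiluted'' limit in Definition~\ref{equivalence}). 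Second, the system $\{l_\alpha\}_{\alpha \in \Lambda}$ is orthonormal in $\cH_e$: the inner product of two such strong limits evaluates, by passage to the limit in the finite cut-offs (\ref{f[m]}), as the convergent product $\prod_n (l^{(n)}_{\alpha_n}, l^{(n)}_{\beta_n})_{H_n} = \delta_{\alpha\beta}$.

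The third and main step is completeness of $\{l_\alpha\}$ in $\cH_e$, which amounts to approximating each basis vector $e_\beta$ of $\cH_e$ by finite linear combinations of $l_\alpha$'s. For each $N$ one decomposes $e_\beta = \big(e^{(1)}_{\beta_1} \otimes \cdots \otimes e^{(N)}_{\beta_N}\big) \otimes \big(e^{(N+1)} \otimes e^{(N+2)} \otimes \cdots\big)$; the finite factor is an exact finite linear combination of vectors $l^{(1)}_{j_1} \otimes \cdots \otimes l^{(N)}_{j_N}$, while the tail must be approximated, in the norm of the tail tensor product $\bigotimes_{n > N,\, e} H_n$, by linear combinations of diluted $l$-tails $l^{(N+1)}_{j_{N+1}} \otimes l^{(N+2)}_{j_{N+2}} \otimes \cdots$ with only finitely many $j_n \neq 1$. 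This is where one uses the equivalence relation in the tail space (symmetry of $\sim$ is clear from Definition~\ref{equivalence}). Once completeness is secured, the map $U : \cH_e \to \cH_l$ sending $l_\alpha \in \cH_e$ to the canonical basis vector $l^{(l)}_\alpha \in \cH_l$ defined by (\ref{basis-tensor}) extends from a bijection of orthonormal bases to a unitary isomorphism, as required.

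\textbf{Part 2.} By the very definition (\ref{complete}), $\cH = \bigoplus_{[e] \in E_{/\sim}} \cH_e$ is an orthogonal direct sum indexed by equivalence classes, so summands attached to non-equivalent $e, l$ are orthogonal by construction. The substantive consistency remark is that any attempt to realize $\cH_l$ inside $\cH_e$ by forming strong limits $l_\alpha$ (as in Part~1) must fail: if even one diluted strong limit in Definition~\ref{equivalence} produced a nonzero vector, the associated infinite scalar product would converge to a nonzero number by Corollary~\ref{f[m]-stab}, forcing $l \sim e$ and contradicting the hypothesis.

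\textbf{Main obstacle.} The chief technical difficulty is the completeness step in Part~1, specifically the approximation of the $e$-tail by $l$-tails in the tail Hilbert space $\bigotimes_{n > N,\, e} H_n$. This is not automatic from pointwise or finite-rank convergence; it requires a careful balance between the number of positions at which one perturbs (indices $n$ where $j_n \neq 1$) and the speed at which $(l^{(n)}, e^{(n)})_{H_n}$ approaches $1$, via a tail-refined application of Lemma~\ref{lemma-111}.
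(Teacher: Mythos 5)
The paper itself offers no proof of Theorem~\ref{He-iso-He'}: it is recalled from \cite{Neu} and \cite[Ch.~I, \S\,2.10]{Ber86}, so your proposal has to stand on its own. Its first two steps are sound: for $n>\nu(\alpha)$ one has $l^{(n)}_{\alpha_n}=l^{(n)}$, so Corollary~\ref{f[m]-stab}, applied through the existence of the undiluted limit in Definition~\ref{equivalence}, gives the strong limits $l_\alpha\in{\cH}_e$, and their orthonormality follows by passing to the limit in the cut-offs (\ref{f[m]}). The genuine gap is the completeness step, which is the heart of the theorem: your ``main obstacle'' paragraph names the difficulty but contains no argument, and the plan you sketch --- for fixed $N$, approximate the tail $\bigotimes_{n>N}e^{(n)}$ by $l$-tails perturbed in finitely many slots, balancing the number of perturbed slots against the rate at which $(l^{(n)},e^{(n)})_{H_n}$ approaches $1$ --- is not carried out, so the isomorphism is not established. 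In addition, the closing remark of Part~2 is incorrect: the existence of one nonzero diluted limit does not force $l\sim e$, because Definition~\ref{equivalence} requires \emph{all} diluted limits to exist (if, say, $(l^{(n_0)},e^{(n_0)})_{H_{n_0}}=0$ for a single $n_0$ while the product over the remaining indices converges to a nonzero number, the dilution at $n_0$ exists although $l\not\sim e$). The orthogonality assertion itself is, as you first say, immediate from the definition (\ref{complete}), so this slip is inessential, but that reasoning should be dropped.

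The completeness gap closes without any delicate balancing. Fix $e_\beta$ and, for $N\ge\nu(\beta)$ beyond the index $q$ furnished by Corollary~\ref{f[m]-stab} for the undiluted limit, set $u_N=e^{(1)}_{\beta_1}\otimes\cdots\otimes e^{(N)}_{\beta_N}\otimes l^{(N+1)}\otimes l^{(N+2)}\otimes\cdots$, which exists in ${\cH}_e$ by the same corollary. Computing inner products through the cut-offs gives $(u_N,l_\alpha)_{{\cH}_e}=\prod_{n\le N}(e^{(n)}_{\beta_n},l^{(n)}_{\alpha_n})_{H_n}$ when $\alpha_n=1$ for all $n>N$, and $0$ otherwise; hence, by Parseval in each $H_n$, $\sum_{\alpha\in\Lambda}\vert(u_N,l_\alpha)_{{\cH}_e}\vert^2=\prod_{n\le N}\Vert e^{(n)}_{\beta_n}\Vert^2_{H_n}=1=\Vert u_N\Vert^2_{{\cH}_e}$, so $u_N$ lies in the closed span of $\{l_\alpha\}$. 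On the other hand, $\Vert e_\beta-u_N\Vert^2_{{\cH}_e}=2-2\,\Re\prod_{n>N}(e^{(n)},l^{(n)})_{H_n}$, and since $l\sim e$ makes $\prod_{n\ge q}(l^{(n)},e^{(n)})_{H_n}$ converge to a finite nonzero number, the tails of this convergent product tend to $1$; therefore $u_N\to e_\beta$ in ${\cH}_e$. Thus every $e_\beta$ lies in the closed span of $\{l_\alpha\}$, the system $\{l_\alpha\}$ is an orthonormal basis of ${\cH}_e$, and your map $U$ is indeed unitary; this is, in substance, the von Neumann--Berezanskii argument that the paper cites instead of proving.
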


\subsection{On some subspaces of the infinite tensor products.}
Let $X_n$ be some subspaces in the Hilbert spaces $H_n,\,\,n\in \bN$
and let $(e^{(n)}_k)_{k\in \bZ}$  be an orthonormal basis in $H_n$
such that the orthonormal basis in $X_n$ is $(e^{(n)}_k)_{k\in
\bN}$, and let $(e^{(n)})_{n\in \bN}$ be the stabilizing
sequence $e^{(n)}=e^{(n)}_1,\,\,n\in {\bN}$. Note that
\begin{equation}
\label{X-in-H+stab}
e^{(n)}=e^{(n)}_1\in X_n\subset H_n ,\quad n\in \bN.
\end{equation}

Consider two spaces
${\cH}_e$ and ${\cH}_e^{l}(X)$, $l\in \bN$, where
$$
{\cH}_e=\bigotimes\limits_{e,n\in {\bN}}H_n
=H_1\otimes H_2\otimes\cdots \otimes H_l\otimes
H_{l+1}\otimes H_{l+2}\otimes\cdots,
$$
\begin{equation}
\label{H(X)}
{\cH}_e^{l}(X)=\bigotimes_{k=1}^l
H_k\otimes\bigotimes\limits_{e,k=l+1}^\infty X_k= H_1\otimes H_2\otimes\cdots \otimes H_l\otimes
X_{l+1}\otimes\cdots.
\end{equation}
In the particular case $X_n=\bC e^{(n)}$ we have
$$
{\cH}_e^{l}(X)=H_1\otimes H_2\otimes\cdots \otimes H_l\otimes
\bC e^{(l+1)}\otimes \bC e^{(l+2)}\otimes\cdots ,\quad l\in \bN.
$$

\begin{Lemma}
\label{X-in-H}
For arbitrary subspaces $X_n$ in $H_n$ and a stabilizing sequence $(e^{(n)})_{n\in \bN}$
with properties {\rm(\ref{X-in-H+stab})}, we have
\begin{equation}
\label{H=bigcup.H.(X)}
{\cH}_e=\overline{\bigcup_{l\geq 1}{\cH}_e^{l}(X)}.
\end{equation}
\end{Lemma}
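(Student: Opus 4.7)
The plan is to reduce the claim to showing that the canonical orthonormal basis $\{e_\alpha : \alpha \in \Lambda\}$ of $\cH_e$ from (\ref{basis-tensor}) is already contained in $\bigcup_{l\ge 1} \cH_e^l(X)$. Since the finite linear combinations of these basis vectors are dense in $\cH_e$ by construction (see (\ref{vector})), this inclusion together with the trivial reverse inclusion $\cH_e^l(X) \subset \cH_e$ will yield (\ref{H=bigcup.H.(X)}).

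First I would verify that the chain $\cH_e^l(X) \subset \cH_e^{l+1}(X) \subset \cH_e$ makes sense as isometric embeddings. The point is that the tail factor $\bigotimes_{e,k=l+1}^{\infty} X_k$ in (\ref{H(X)}) uses the same stabilizing sequence $(e^{(n)})_{n>l}$ as $\bigotimes_{e,k=l+1}^{\infty} H_k$, which is legitimate precisely because (\ref{X-in-H+stab}) guarantees that each stabilizer $e^{(n)}$ is a unit vector lying in $X_n$. Since the o.n.b.\ $(e^{(n)}_k)_{k\in\bN}$ of $X_n$ is a subset of the o.n.b.\ $(e^{(n)}_k)_{k\in\bZ}$ of $H_n$, the o.n.b.\ of $\bigotimes_{e,k>l} X_k$ identifies with a subfamily of the o.n.b.\ of $\bigotimes_{e,k>l} H_k$, so the embedding is an isometry. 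The monotonicity $\cH_e^l(X) \subset \cH_e^{l+1}(X)$ then follows from $X_{l+1} \subset H_{l+1}$.

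Second, I would unpack an arbitrary basis vector $e_\alpha$. Taking $l = \nu(\alpha)$ as in (\ref{Lambda_n}), we have $\alpha_n = 1$ for all $n > l$, so $e^{(n)}_{\alpha_n} = e^{(n)} \in X_n$ for every such $n$ by (\ref{X-in-H+stab}). Hence
\begin{equation*}
e_\alpha = e^{(1)}_{\alpha_1} \otimes \cdots \otimes e^{(l)}_{\alpha_l} \otimes e^{(l+1)} \otimes e^{(l+2)} \otimes \cdots \;\in\; H_1 \otimes \cdots \otimes H_l \otimes \bigotimes_{e,k>l} X_k = \cH_e^l(X).
\end{equation*}
Taking closures, the span of $\{e_\alpha\}_{\alpha\in\Lambda}$ is dense in $\cH_e$ and contained in $\bigcup_{l\ge 1} \cH_e^l(X)$, which forces $\cH_e \subset \overline{\bigcup_{l\ge 1} \cH_e^l(X)}$.

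I do not expect any serious obstacle; the argument is essentially bookkeeping about which factors of a tensor-product basis vector lie in $X_n$ versus $H_n$. The only delicate point is in the first step, where one must check that $\bigotimes_{e,k>l} X_k$ is a genuine (isometric) subspace of $\bigotimes_{e,k>l} H_k$ — this is where the assumption $e^{(n)} = e^{(n)}_1 \in X_n$ from (\ref{X-in-H+stab}) is essential, for otherwise the two tail tensor products would be built from incompatible stabilizing sequences and Lemma~\ref{lemma-111} would fail to align them.
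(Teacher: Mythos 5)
Your proposal is correct and follows essentially the same route as the paper: both establish the trivial inclusion $\cH_e^{l}(X)\subseteq\cH_e$ and then verify that every canonical basis vector $e_\alpha$ of $\cH_e$ lies in $\cH_e^{l}(X)$ for $l=\nu(\alpha)$ (the paper phrases this as choosing $l=n$, $k=0$ in its description of the basis of $\cH_e^{l}(X)$), using that the stabilizers $e^{(n)}$ belong to $X_n$. Your extra remark on the isometric embedding of the tail products is a harmless elaboration of the same bookkeeping.
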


\begin{proof}
The basis in the space  ${\cH}_e$ is the following (see (\ref{basis-tensor})):
\begin{equation}
\label{bas-1}
e_\alpha=e^{(1)}_{\alpha_1}\otimes e^{(2)}_{\alpha_2}
\otimes\cdots\otimes e^{(n)}_{\alpha_n}\otimes
e^{(n+1)}\otimes\cdots, \,\,\alpha\in \Lambda=\bigcup_{n\in \bN}\Lambda_n,
\end{equation}
where $\Lambda_n$ is defined by (\ref{Lambda_n}), and the basis in
the space ${\cH}_e^{l}(X)$ is
$$
e_\alpha^{l}=e^{(1)}_{\alpha_1}\otimes e^{(2)}_{\alpha_2}
\otimes\cdots\otimes e^{(l)}_{\alpha_l}\otimes
e^{(l+1)}_{\alpha_{l+1}} \otimes\cdots\otimes
e^{(l+k)}_{\alpha_{l+k}}\otimes e^{(l+k+1)}\otimes\cdots,
\qquad\qquad\qquad
$$
\begin{equation}
\label{bas-2}
\qquad\qquad\qquad\qquad\qquad\qquad\qquad\qquad
\alpha\in \Lambda_l=\bigcup_{k\in \bN\cup\{0\}}\Lambda_{l,k},
\end{equation}
where
$$
\Lambda_{l,k}=\{\alpha\in \Lambda: \alpha_{i}\in \bZ, \, 1\leq i\leq l,
\,\, \alpha_{i}\in \bN,\, l+1\leq i\leq l+k,\,\,\alpha_{l+k+i}=1,\,\,i>1\}.
$$
Obviously ${\cH}_e\supseteq{\cH}_e^{l}(X)$ for all $l\in \bN$,
hence ${\cH}_e\supseteq\overline{\bigcup_{l\geq 1}{\cH}_e^{l}(X)}$.

We show that ${\cH}_e\subseteq\overline{\bigcup_{l\geq 1}{\cH}_e^{l}(X)}$.
It is sufficient to show that all vectors $e_\alpha$ of the form (\ref{bas-1}) are
contained in the family of vectors $e_\alpha^{l}$ of the form (\ref{bas-2}).
Indeed, if we take in (\ref{bas-2}) $l=n$ and $k=0$, we obtain all vectors
$e_\alpha$ of the form (\ref{bas-1}).
\end{proof}

\subsection{Infinite tensor product of Hilbert spaces $L^2({\bQ}_{p},dx_{p})$.}\label{s4.3}
Using the results of the previous section we can define the infinite tensor product
\begin{equation}
\label{otimes-L^2(Q)}
{\cH}_e(\bQ)=
\bigotimes\limits_{e,\,p\in V_{\bQ}}L^2({\bQ}_{p},dx_{p})
\end{equation}
of Hilbert spaces $L^2({\bQ}_{p},dx_{p})$ with an {\em arbitrary stabilizing sequence}
$e=(e^{(p)})_{p\in V_{\bQ}}$ $e^{(p)}\in L^2({\bQ}_{p},dx_{p})$.

Let $\{e_{\alpha_{\infty}}^{(\infty)}\}_{\alpha_{\infty}\in {\bI}_\infty}$
and $\{e_{\alpha_{p}}^{(p)}\}_{\alpha_{p}\in {\bI}_p}$ be {\em arbitrary orthonormal bases}
in $L^2(\bQ_{\infty},dx_\infty)$ and $L^2(\bQ_{p},dx_p)$, respectively,
where ${\bI}_{\infty}$ and ${\bI}_{p}$ are the corresponding multi-indices.
Then the orthonormal basis in the space ${\cH}_e(\bQ)$ is the following:
\begin{equation}
\label{.}
e_\alpha =e_{\alpha_{\infty}}^{(\infty)}
\otimes\bigotimes\limits_{p\leq m}e_{\alpha_{p}}^{(p)}
\otimes\bigotimes\limits_{m<p}e^{(p)},\quad \alpha=(\alpha_\infty,
\alpha_2,\alpha_3,\dots)\in \Lambda,
\end{equation}
where $(e^{(p)})_{p\in V_{\bQ}}$, $e^{(p)}\in L^2({\bQ}_{p},dx_{p})$, is
some stabilizing sequence and $\Lambda$ is defined below.

\begin{Definition}
\label{d.Lambda}\rm
Define $\Lambda$ as the  set of multi-indices $\alpha=(\alpha_p)_{p\in V_{\bQ}},$ $\alpha_p\in {\bI}_p$
such that $e_{\alpha_p}^{(p)}=e^{(p)}$ for sufficiently big $p$ depending on $\alpha$.
\end{Definition}

Fix the stabilizing sequence of the form
\begin{equation}
\label{stab-phi}
e=(e^{(p)})_{p\in V_{\bQ}} \qquad e^{(p)}(x_p)=\phi_p(x_p)=\Omega\big(|x_{p}|_p\big)\in L^2({\bZ}_{p},dx^0_{p}), \quad p\in V_{\bQ}.
\end{equation}

Using Lemma~\ref{X-in-H}, for the stabilizing sequence (\ref{stab-phi}),
we obtain the following description of the space $L^2(\bA,dx)$.
\begin{Lemma}
\label{l.H(Q)=L^2(A)}
We have
\begin{equation}
\label{H(Q)=L^2(A)}
L^2(\bA,dx) ={\cH}_e(\bQ)=\bigotimes\limits_{e,\,p\in V_{\bQ}}L^2({\bQ}_{p},dx_{p}).
\end{equation}
\end{Lemma}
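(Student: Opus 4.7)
The plan is to realize both sides of (\ref{H(Q)=L^2(A)}) as the closure of the same increasing tower of finite-level subspaces indexed by finite sets $S\ni\infty$. Enumerate $V_\bQ$ as $p_0=\infty,\,p_1=2,\,p_2=3,\,p_3=5,\dots$ and, for each $l\geq 1$, let $S_l=\{p_0,p_1,\dots,p_{l-1}\}$, so that $S_l\nearrow V_\bQ$. I apply Lemma~\ref{X-in-H} with $H_p=L^2(\bQ_p,dx_p)$, and with $X_p$ the subspace of $L^2(\bQ_p)$ consisting of functions supported on $\bZ_p$ (naturally isomorphic to $L^2(\bZ_p,dx^0_p)$); the stabilizing sequence is that of (\ref{stab-phi}), namely $e^{(p)}(x_p)=\Omega(|x_p|_p)$. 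The hypotheses (\ref{X-in-H+stab}) hold because $\Omega(|\cdot|_p)$ is supported on $\bZ_p$ (on $[0,1]$ when $p=\infty$), hence lies in $X_p$, with norm $\|e^{(p)}\|_{H_p}^2=\int_{\bZ_p}dx_p=1$. Orthonormal bases of $H_p$ extending $e^{(p)}$ and restricting to bases of $X_p$ are furnished by the modified Haar bases (\ref{62.0-1=111-Haar}), (\ref{62.0-1=111}) together with Propositions~\ref{lem-zz-1}--\ref{lem-zz-1-00}. Lemma~\ref{X-in-H} then gives
$$\cH_e(\bQ)=\overline{\bigcup_{l\geq 1}\cH_e^{l}(X)},\qquad \cH_e^{l}(X)=\bigotimes_{p\in S_l}L^2(\bQ_p,dx_p)\otimes\bigotimes_{e,\,p\notin S_l}L^2(\bZ_p,dx^0_p).$$

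Next I would identify each finite-level space with an honest $L^2$-space. Since each $(\bZ_p,dx^0_p)$ is a probability space, Theorem~\ref{t.L2-iso-He2} applies with the non-probability factors indexed by $p\in S_l$, producing a natural isomorphism $\cH_e^{l}(X)\cong L^2(\bA_{S_l})$, where $\bA_{S_l}$ carries the product measure $\bigotimes_{p\in S_l}dx_p\otimes\bigotimes_{p\notin S_l}dx^0_p$, in agreement with (\ref{ad-meas}). Extending functions by zero outside $\bA_{S_l}\subset\bA$ embeds $L^2(\bA_{S_l})$ isometrically into $L^2(\bA)$, and these embeddings commute with the inclusions $\cH_e^{l}(X)\hookrightarrow\cH_e^{l+1}(X)$: passing from $S_l$ to $S_{l+1}$ replaces the ``passive'' factor $X_{p_l}$ by the full space $H_{p_l}=L^2(\bQ_{p_l})$, which on the adelic side is exactly the liberation of the $p_l$-th coordinate from being confined to $\bZ_{p_l}$ by the indicator $\Omega(|\cdot|_{p_l})$.

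Finally, I would establish density of $\bigcup_{l\geq 1}L^2(\bA_{S_l})$ in $L^2(\bA)$. Since $\bA=\bigcup_{l\geq 1}\bA_{S_l}$ is an increasing union, for any $f\in L^2(\bA)$ the truncations $f\cdot\chi_{\bA_{S_l}}$ lie in $L^2(\bA_{S_l})$ and converge to $f$ in $L^2(\bA)$ by dominated convergence. Combining the three steps, both sides of (\ref{H(Q)=L^2(A)}) are realized as the closure of the same compatible chain, yielding the asserted identification. The main obstacle is bookkeeping at the level of isomorphisms: one must verify that the identifications $\cH_e^{l}(X)\cong L^2(\bA_{S_l})$ at successive levels agree, so that the inductive limits on the two sides coincide. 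This is precisely the reason the stabilizing vectors must be chosen as the normalized indicators $\Omega(|\cdot|_p)$, since under Theorem~\ref{t.L2-iso-He2} they correspond to the constant $1$ on the probability factors $\bZ_p$, which is the unique choice that matches the way functions on $\bA_{S_l}$ sit inside functions on $\bA_{S_{l+1}}$.
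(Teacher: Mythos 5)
Your argument is essentially the paper's own proof: it too applies Lemma~\ref{X-in-H} with $H_p=L^2(\bQ_p)$, $X_p=L^2(\bZ_p)$ and the stabilizing sequence (\ref{stab-phi}), identifying the intermediate spaces ${\cH}_e^{l}(X)$ with $L^2(\bA^p,\mu^p)$ (your $L^2(\bA_{S_l})$) via Theorem~\ref{t.L2-iso-He2} and using that $L^2(\bA,dx)=\overline{\bigcup_{p}L^2(\bA^p,\mu^p)}$ as in Lemma~\ref{l.HH^p}. You merely spell out the compatibility of the identifications and the density/truncation step that the paper leaves implicit, so the proposal is correct and matches the paper's route.
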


\begin{proof}
It is sufficient to use Lemma~\ref{X-in-H} and set
$X_p=L^2(\bZ_p)$, $H_p=L^2(\bQ_p)$, ${\cH}_e^p=L^2({\bA}^p,\mu^p)
=\bigotimes\limits_{q\leq p}L^2(\bQ_q)\otimes\bigotimes\limits_{q> p}L^2(\bZ_q)$, $p\in V_\bQ$,
where ${\bA}^p={\bQ}_\infty \times \prod_{p'\leq p}{\bQ}_{p'}\times\prod_{p'> p}{\bZ}_{p'}$,
$p\in V_{\bQ}\setminus{\infty}$ (see (\ref{A^p})),
$\mu^p$ is the restriction on ${\bA}^p$ of the Haar measure $dx$ on ${\bA}$.
\end{proof}

\subsection{Some remarks about the stabilizations.}\label{s4.4}
Consider the space (\ref{otimes-L^2(Q)})
\begin{equation}
\label{H-e(Q)}
{\cH}_e(\bQ)= \bigotimes\limits_{e,\,p\in V_{\bQ}}L^2({\bQ}_{p},dx_{p}),
\end{equation}
where the stabilizing sequence $e=(e^{(p)})_{p\in V_{\bQ}}$
has the special form $e^{(p)}(x_p)=\phi_p(x_p)\in L^2({\bQ}_{p},dx_{p})$.
We show that the {\em sequence $e$ and $M^j_{V_\bQ}e$ for
$j\in \bZ$ are not equivalent} (see Definition~\ref{equivalence}), where
\begin{equation}
\label{M^j_VQ}
M^j_{V_\bQ} e:=(M^j_p\phi^{(p)})_{p\in V_\bQ}\quad\text{and}
\quad
M^j_p\phi^{(p)}(x_p):=p^{-j/2}\phi^{(p)}(p^jx_p).
\end{equation}
Indeed, we have
$$
(\phi^{(p)},M^j_p\phi^{(p)})_{L^2(\bQ_p)}
=\int_{\bQ_p}\phi^{(p)}(x_p)p^{-j/2}\phi^{(p)}(p^jx_p)dx=
\left\{
\begin{array}{rll}
p^{-j/2}, && j\geq 0, \\
p^{j/2}, && j\leq -1.  \\
\end{array}
\right.
$$
Set $c_\infty=(\phi^{(\infty)},M^j_\infty\phi^{(\infty)})_{L^2(\bQ_p\infty)}$, then we get
$$
c_\infty^{-1}(e,M^j_{V_\bQ}e)=\prod_{p\in
V_\bQ\setminus\infty}(\phi^{(p)},M^j_p\phi^{(p)})_{L^2(\bQ_p)}
=\left\{
\begin{array}{rll}
\left(\prod_{p\in V_\bQ\setminus\infty}p\right)^{-j/2}, && j\geq 0, \\
\left(\prod_{p\in V_\bQ\setminus\infty}p\right)^{j/2}, && j \le -1,  \\
\end{array}
\right.
$$
Thus in both cases the product is divergent, i.e., $(e,M^j_{V_\bQ}e)=0$
hence the sequences $e$ and $M^j_{V_\bQ}e$ are not equivalent!

A similar argument holds if we take as the stabilizing sequence
some elements of Kozurev's basis $\psi_{\alpha_p}^{(p)}$ in the space
$L^2(\bQ_p)$, i.e., $e=(\psi_{\alpha_p}^{(p)})_{p\in V_\bQ}$. In
this case we immediately get
$(\psi_{\alpha_p}^{(p)},M^j_p\psi_{\alpha_p}^{(p)})_{L^2(\bQ_p)}=0$ for all
$j\in \bZ$. Hence the sequences
$e=(\psi_{\alpha_p}^{(p)})_{p\in V_\bQ}$ and
$M^j_{V_\bQ}e=(M^j_p\psi_{\alpha_p}^{(p)})_{p\in V_\bQ}$ are not equivalent.

The above considerations {\em force us to consider the following space}
\begin{equation}
\label{+H(Q)}
\mathcal{H}_{e,\bZ}(\bQ)=\oplus_{j\in\bZ}\mathcal{H}_{M^j_{V_\bQ} e}(\bQ)
\end{equation}
{\em to be sure that the operator $M^j_{V_\bQ}$ is well-defined}.
Further we set
\begin{equation}
\label{W_k} W_k= \mathcal{H}_{M^k_{V_\bQ} e}(\bQ)
\end{equation}
\begin{equation}
\label{V_j}
V_j=\oplus_{k=-\infty}^jW_k=\oplus_{k=-\infty}^j\mathcal{H}_{M^k_{V_\bQ} e}(\bQ).
\end{equation}

\begin{Theorem}
\label{MRA M^j_-VQ}\rm
The collection of closed spaces $V_j\subset \mathcal{H}_{e,\bZ}(\bQ)$, $j\in\bZ$,
defined by (\ref{V_j}) is a multiresolution analysis in $\mathcal{H}_{e,\bZ}(\bQ)$,
i.e., the following properties hold:

(a) $V_j\subset V_{j+1}$ for all $j\in\bZ$;

(b) $\bigcup\limits_{j\in\bZ}V_j$ is dense in $\mathcal{H}_{e,\bZ}(\bQ)$;

(c) $\bigcap\limits_{j\in\bZ}V_j=\{0\}$;

(d) $f(\cdot)\in V_j \Longleftrightarrow f(M^j_{V_\bQ}\cdot)\in V_{j+1}$
for all $j\in\bZ$;

(e) there exists a basis $(e_i)_{i\in I}$ in the space $W_0$ such that $(M^k_{V_\bQ}e_i)_{i\in I}$
is a basis in the space $W_k$, $k\in \bZ$.
\end{Theorem}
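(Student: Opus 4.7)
The key structural observation is that, by the calculation immediately preceding the theorem, the stabilizing sequences $M^{j}_{V_{\bQ}}e$ and $M^{k}_{V_{\bQ}}e$ are not equivalent whenever $j\neq k$. By Theorem~\ref{He-iso-He'}, the associated spaces $W_k=\mathcal{H}_{M^{k}_{V_{\bQ}}e}(\bQ)$ are therefore mutually orthogonal in $\mathcal{H}_{e,\bZ}(\bQ)$, and the definition $\mathcal{H}_{e,\bZ}(\bQ)=\oplus_{k\in\bZ}\mathcal{H}_{M^{k}_{V_{\bQ}}e}(\bQ)$ is a bona fide orthogonal direct sum decomposition. With this in hand, properties (a)--(e) become more or less routine.

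First I would deduce the three lattice-theoretic axioms directly from the direct sum decomposition. Axiom (a) is immediate, since $V_{j+1}=V_j\oplus W_{j+1}\supseteq V_j$. Axiom (b) follows because the union $\bigcup_{j\in\bZ}V_j$ contains every finite direct sum of the $W_k$'s, and such finite sums are dense in the orthogonal direct sum $\oplus_{k\in\bZ}W_k=\mathcal{H}_{e,\bZ}(\bQ)$. Axiom (c) follows by projecting: if $f\in\bigcap_jV_j$ and $k_0\in\bZ$ is arbitrary, taking $j<k_0$ shows that the orthogonal projection of $f$ onto $W_{k_0}$ vanishes, hence $f=0$.

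For axiom (d), I would introduce the unitary tensor-product operator
\[
M_{V_{\bQ}}:=\bigotimes_{p\in V_{\bQ}}M^{1}_p,
\]
built from the unitary dilations $M^{1}_p\phi(x_p)=p^{-1/2}\phi(px_p)$ on $L^2(\bQ_p)$ as in Remark~\ref{A=lim-m-A-(m)}. Applied to a total vector $\otimes_p f^{(p)}$ whose stabilization is $M^{k}_{V_{\bQ}}e$, the operator $M_{V_{\bQ}}$ produces a vector whose stabilization is $M^{k+1}_{V_{\bQ}}e$; hence $M_{V_{\bQ}}$ maps $W_k$ isometrically onto $W_{k+1}$. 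Consequently $M_{V_{\bQ}}V_j=\bigoplus_{k\le j}W_{k+1}=V_{j+1}$, which is the content of (d) (read with the fixed one-step dilation $M_{V_{\bQ}}$, consistent with the classical and $p$-adic statements of Definitions~\ref{de1-Real} and \ref{de1}). Axiom (e) then comes for free: any orthonormal basis $(e_i)_{i\in I}$ of $W_0=\mathcal{H}_e(\bQ)$ is carried by the iterated unitary $M^{k}_{V_{\bQ}}$ to an orthonormal basis $(M^{k}_{V_{\bQ}}e_i)_{i\in I}$ of $W_k$.

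The step I expect to require the most care is the verification in (d) that $M_{V_{\bQ}}$, a priori only defined on the total family of decomposable vectors, extends to a genuine unitary operator on the infinite tensor product and sends the entire space $W_k$ onto $W_{k+1}$. One must check that on the spanning family of basis vectors (\ref{basis-tensor}) associated to the stabilization $M^{k}_{V_{\bQ}}e$, the $M_{V_{\bQ}}$-image has finite norm and again lies in the total family for the stabilization $M^{k+1}_{V_{\bQ}}e$. This is where Lemma~\ref{lemma-111} and Corollary~\ref{f[m]-stab}, combined with the unitarity of each individual $M^{1}_p$ on $L^2(\bQ_p)$, do the essential work; once this is in place, everything else is formal.
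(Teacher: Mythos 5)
Your proposal is correct and is essentially the argument the paper intends: the theorem is stated without an explicit proof, being treated as an immediate consequence of the preceding construction of $\mathcal{H}_{e,\bZ}(\bQ)$ as the orthogonal sum of the pairwise non-equivalent (hence, by Theorem~\ref{He-iso-He'}, orthogonal) spaces $W_k=\mathcal{H}_{M^k_{V_\bQ}e}(\bQ)$, together with the unitarity of the componentwise dilations. Your verifications of (a)--(e) flesh out exactly that reasoning (your argument for (c) is precisely the paper's Lemma~\ref{bigcap(V-j)=0}), and your care about extending $\bigotimes_p M^1_p$ from decomposable vectors to a unitary $W_k\to W_{k+1}$ via Lemma~\ref{lemma-111} and Corollary~\ref{f[m]-stab} addresses the only genuinely delicate point.
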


\subsection{Complete von Neumann product of Hilbert spaces $L^2({\bQ}_{p},dx_{p})$.}\label{s4.41}
It is natural to consider also the complete von Neumann product (see (\ref{complete-Q}))
of Hilbert spaces $L^2({\bQ}_{p},dx_{p}),\,\,p\in V_\bQ$
\begin{equation}
\label{complete-Q}
{\cH}(\bQ)=\bigoplus\limits_{e\in E_{/\sim}}{\cH}_e(\bQ),
\end{equation}
where ${\cH}_e(\bQ)$ is defined by (\ref{H-e(Q)}). Perhaps this space could be useful
in further development of analysis on the adele space $\bA$. The space
$\mathcal{H}_{e,\bZ}(\bQ)$ defined by (\ref{+H(Q)}) certainly, contains the space $L^2(\bA)$ and
is roughly speaking an infinite direct sum of non isomorphic copies of the space similar
to $L^2(\bA)$.

\subsection{Basis on $L^2({\bA},dx)$.}\label{s4.5}
We can construct a basis in $L^2({\bA},dx)$ using description (\ref{H(Q)=L^2(A)}) of
the space $L^2({\bA},dx)$ in Lemma \ref{l.H(Q)=L^2(A)}. Without using this description
we can proceed as follows.

We present $\bA$ as the union of some subgroups $\bA^{p}$:
$\bA=\bigcup_{p\in V_{\bQ}}\bA^p$ (see (\ref{A^p})), then we realize
$L^2(\bA^{p},\mu^{p})$ as the infinite tensor product of Hilbert
spaces (see (\ref{H^p=otimes})) and use the following considerations.

Let $(X,\mu)$ be some measurable space and let $X$ be the union of the measurable sets $X_n$,
$X=\bigcup_{n\in \bN}X_n$. Set $\mu_n=\mu|_{X_n}$,
$\nu_n=\mu|_{X_n\setminus X_{n-1}},\,\,n\geq 2,\quad \nu_1=\mu_1$ and $X_0=\emptyset$.
In this case we have
\begin{equation}
\label{H=bigcup-n(H-n)}
L^2(X,\mu)=\oplus_{n\in\bN}L^2(X_n\setminus X_{n-1},\nu_n)=\overline{\bigcup_{n\in\bN}L^2(X_n,\mu_n)}.
\end{equation}
For any prime $p$ we denote by $p_-$ and $p_+$ the previous and the following
primes.

Define the subgroup ${\bA}^p$ of the group ${\bA}$ as follows:
\begin{equation}
\label{A^p}
{\bA}^p={\bQ}_\infty \times \prod_{p'\leq p}{\bQ}_{p'}
\times\prod_{p'> p} {\bZ}_{p'},\quad p\in V_{\bQ}.
\end{equation}
We have $\bA=\bigcup_{p\in V_\bQ}\bA^p$.

Let $dx^0_p$ be the restriction of the Haar
measure $dx_p$ on ${\bQ}_p$ to the subgroup ${\bZ}_p$, let
$d\mu^p$ be the restriction to the subgroup ${\bA}^p$ of the
measure $dx$ on ${\bA}$, and let $d\nu_p$ be the restriction of the
measure $dx$ to the subset ${\bA}^p\setminus {\bA}^{p_-}$. Then we have
\begin{equation}
\label{A^p-1}
d\mu^p=dx_\infty\otimes \Big(\otimes_{p'\leq p}dx_{p'}\Big)\otimes\Big(\otimes_{p'> p}dx^0_{p'}\Big).
\end{equation}
and using (\ref{H=bigcup-n(H-n)}) we get.

\begin{Lemma}
\label{l.HH^p} For an arbitrary prime $p$ we have the following
description:
\begin{equation}
\label{HH^p}
L^2({\bA},dx)=L^2({\bA}^p,\mu^p)
\oplus\Big(\bigoplus_{p'> p}L^2({\bA}^{p'}\setminus {\bA}^{p'_{-}},\nu_{p'})\Big)
=\overline{\bigcup_{p\in V_\bQ}L^2({\bA}^p,\mu^p)}.
\end{equation}
\end{Lemma}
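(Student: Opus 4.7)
The plan is to verify that the family $\{\bA^p\}_{p\text{ prime}}$ is an increasing chain of measurable subgroups whose union is $\bA$, and then invoke the general principle (\ref{H=bigcup-n(H-n)}) that $L^2$ of an increasing countable union decomposes as an orthogonal sum of $L^2$-spaces of successive ``annular'' differences.

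First I would check monotonicity: for consecutive primes $p < p_+$, definition (\ref{A^p}) gives
$$\bA^{p_+} = \bQ_\infty \times \prod_{p'\le p}\bQ_{p'} \times \bQ_{p_+} \times \prod_{p'>p_+}\bZ_{p'},$$
whereas in $\bA^p$ the factor at $p_+$ is $\bZ_{p_+}\subset\bQ_{p_+}$. All other factors agree, so $\bA^p\subset\bA^{p_+}$. By induction $\bA^p\subset\bA^{p'}$ whenever $p\le p'$. Next, the identity $\bA=\bigcup_{p\in V_\bQ}\bA^p$ follows directly from Definition~\ref{df:adele}: an adele $x=(x_{p'})_{p'\in V_\bQ}$ satisfies $x_{p'}\in\bZ_{p'}$ for all but finitely many $p'$, and choosing any prime $P$ larger than all exceptional primes gives $x\in\bA^P$. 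Conversely, every $\bA^P$ is contained in $\bA$ by construction.

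Now I would fix the base prime $p\in V_\bQ$ and enumerate the primes strictly greater than $p$ as $p<p_+<p_{++}<\cdots$. Setting $X_1=\bA^p$ and $X_n=\bA^{p^{(n-1)}_+}$ for $n\ge 2$ (where $p^{(k)}_+$ is the $k$-th prime after $p$), one obtains an increasing sequence of measurable subsets of $\bA$ whose union is $\bA$. The corresponding set differences are exactly $X_n\setminus X_{n-1}=\bA^{p'}\setminus\bA^{p'_-}$ for $p'>p$, and the restricted measures coincide with $\mu^p$ on $\bA^p$ and with $\nu_{p'}$ on $\bA^{p'}\setminus\bA^{p'_-}$ by definition. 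Applying formula (\ref{H=bigcup-n(H-n)}) to this situation yields simultaneously both the orthogonal direct sum decomposition
$$L^2(\bA,dx) = L^2(\bA^p,\mu^p)\oplus\bigoplus_{p'>p}L^2(\bA^{p'}\setminus\bA^{p'_-},\nu_{p'})$$
and the density statement $L^2(\bA,dx)=\overline{\bigcup_{p\in V_\bQ}L^2(\bA^p,\mu^p)}$, where each $L^2(\bA^p,\mu^p)$ is viewed as a closed subspace of $L^2(\bA,dx)$ via extension by zero outside $\bA^p$.

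There is essentially no hard step here: once the set-theoretic identities (monotonicity, countable exhaustion, and disjoint decomposition of $\bA$) are verified, the conclusion is a direct application of the abstract lemma (\ref{H=bigcup-n(H-n)}). The only point requiring a small amount of care is the bookkeeping with the index $p\in V_\bQ$ (so that $\infty$ is treated as the ``zeroth'' valuation and the primes are enumerated in the usual order with predecessors $p_-$ well-defined); this is a purely notational matter.
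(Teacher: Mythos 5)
Your proposal is correct and follows essentially the same route as the paper: the paper also obtains the lemma by exhausting $\bA$ by the increasing chain of subgroups $\bA^p$ with the restricted measures $\mu^p$, $\nu_{p'}$ and then applying the abstract decomposition (\ref{H=bigcup-n(H-n)}), stating this without spelling out the monotonicity and union checks that you verify explicitly.
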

To construct a basis in the space $L^2({\bA},dx)$  using the
latter description, it is sufficient to construct a basis in the
space $L^2({\bA}^p,\mu^p)$. The latter space $L^2({\bA}^p,\mu^p)$
is the infinite tensor product of the spaces
$L^2({\bQ}_p,dx_p)$ and $L^2({\bZ}_p,dx^0_p)$:
\begin{equation}
\label{H^p=otimes}
L^2({\bA}^p,\mu^p)\!=\!L^2({\bQ}_\infty,dx_\infty)
\!\otimes\!\Big(\bigotimes\limits_{p'\leq p}L^2({\bQ}_{p'},dx_{p'})\Big)
\!\otimes\!\Big(\bigotimes\limits_{e,p'> p}L^2({\bZ}_{p'},dx^0_{p'})\Big).
\end{equation}

\begin{Remark}\rm
Since the measures $dx_p^0$ on ${\bZ}_p$ are {\em probability
measures}, decomposition (\ref{H^p=otimes}) allows us to use an
explicit description of the basis in the infinite tensor product
$\otimes_{k\in {\bN}}L^2(X_k,\mu_k)$ of Hilbert spaces
$L^2(X_k,\mu_k)$ with probability measures $\mu_k$, $k\in {\bN}$
(for example, see~\cite{Ber86}) in order to construct a basis
in the spaces $L^2({\bA}^p,\mu^p)$ and $L^2({\bA},dx)$.
\end{Remark}

Suppose that $\{e_{\alpha_{\infty}}^{(\infty)}\}_{\alpha_{\infty}\in {\bI}_\infty}$,
and $\{e_{\alpha_{p}}^{(p)}\}_{\alpha_{p}\in {\bI}_p}$ are {\em arbitrary}
orthonormal bases in the spaces $L^2(\bQ_{\infty})$ and $L^2(\bQ_{p})$,
respectively, ${\bI}_{\infty}$ and ${\bI}_{p}$ are the corresponding indices.
Fix the stabilizing sequence $(e^{(p)})_{p\in V_\bQ}$, where $e^{(p)}$ is some element of the basis
$\{e_{\alpha_{p}}^{(p)}\}_{\alpha_{p}\in {\bI}_p}$ for all $p\in V_{\bQ}$ such that
$e^{(p)}(x)\in L^2({\bZ}_{p},dx^0_{p})$.
We can construct a basis in the space $L^2({\bA}^p,\mu^p)$ for all
$p\in V_{\bQ}$ using decomposition (\ref{H^p=otimes}). In such
a way we can construct a basis in $L^2({\bA},dx)$ using (\ref{HH^p}).
As it was mentioned before, using Lemma~\ref{l.H(Q)=L^2(A)} and the basis in the infinite tensor product
we obtain.
\begin{Theorem}
\label{A-00-0}
The vectors
\begin{equation}
\label{1-ad-0}
e_\alpha=e_{\alpha_{\infty}}^{(\infty)}
\otimes\bigotimes\limits_{q\leq m}e_{\alpha_{q}}^{(q)}
\otimes\bigotimes\limits_{m<q}e^{(q)}, \quad \alpha\in
\Lambda=\bigcup_{m\in V_\bQ}\Lambda_m,
\end{equation}
form the orthonormal basis in the space $L^2({\bA},dx)$, where
$\Lambda$ is the set of multi-indices $\alpha=(\alpha_p)_{p\in V_\bQ}$,
such that $e_{\alpha_p}^{(p)}=e^{(p)}$  for sufficiently big $p$
depending on $\alpha$ {\rm({\em see} (\ref{d.Lambda}))},
$\Lambda_p=\{\alpha\in \Lambda: \nu_a(\alpha)=p\}$, $p\in V_\bQ$, and $\nu_a(\alpha)$ denote the minimal
$p\in V_\bQ$ such that $e_{\alpha_q}^{(q)}=e^{(q)}$ for $q>p$.
\end{Theorem}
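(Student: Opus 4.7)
The proof reduces to combining two ingredients that have already been set up: the identification of $L^2(\bA,dx)$ with an infinite tensor product furnished by Lemma~\ref{l.H(Q)=L^2(A)}, and the canonical construction of an orthonormal basis in such a product given by formula~(\ref{basis-tensor}). So my plan is essentially to unpack these and check that the resulting basis matches the one written in the statement.

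First, I invoke Lemma~\ref{l.H(Q)=L^2(A)} to write
\begin{equation*}
L^2(\bA,dx)=\cH_e(\bQ)=\bigotimes_{e,\,p\in V_\bQ}L^2(\bQ_p,dx_p),
\end{equation*}
with stabilizing sequence $e=(e^{(p)})_{p\in V_\bQ}$, $e^{(p)}(x_p)=\phi_p(x_p)=\Omega(|x_p|_p)\in L^2(\bZ_p,dx_p^0)$. Since $\|e^{(p)}\|_{L^2(\bQ_p)}=1$ for every $p$, I can choose each orthonormal basis $\{e_{\alpha_p}^{(p)}\}_{\alpha_p\in\bI_p}$ of $L^2(\bQ_p)$ in such a way that $e^{(p)}$ appears as one of its elements (indexed, say, by $\alpha_p=1$). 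This puts us exactly in the setup (\ref{vect_stab}) required for the infinite tensor product construction.

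Second, I apply the definition (\ref{basis-tensor}) of the orthonormal basis in $\bigotimes_{e,p\in V_\bQ}L^2(\bQ_p,dx_p)$ with stabilizing sequence $e$: the basis consists of all vectors
\begin{equation*}
e_\alpha=e_{\alpha_\infty}^{(\infty)}\otimes\bigotimes_{q\leq m}e_{\alpha_q}^{(q)}\otimes\bigotimes_{m<q}e^{(q)}
\end{equation*}
indexed by multi-indices $\alpha\in\Lambda$ for which $e_{\alpha_p}^{(p)}=e^{(p)}$ for all sufficiently large $p$ (Definition~\ref{d.Lambda}). The decomposition $\Lambda=\bigcup_{m\in V_\bQ}\Lambda_m$ is exactly the analog, for the index set $V_\bQ$, of the partition (\ref{Lambda_n}) used in the $\bN$-indexed setting; orthonormality of $\{e_\alpha\}_{\alpha\in\Lambda}$ and its totality in $\cH_e(\bQ)$ are the statements that were recalled in Subsec.~\ref{s4.1}. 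Combined with the identification of the first paragraph, this yields the theorem.

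There is essentially no hard step: the only mild point of bookkeeping is that $V_\bQ$ is indexed by $\{\infty,2,3,5,\dots\}$ rather than $\bN$, so one must order the primes and treat $\infty$ as the distinguished initial index, but this is a cosmetic change from the setup in~\cite{Ber86}. If a fully self-contained argument is desired, one can alternatively proceed through Lemma~\ref{l.HH^p}: construct a basis in each $L^2(\bA^p,\mu^p)$ using (\ref{H^p=otimes}) and the standard basis in an infinite tensor product with probability measure stabilization (Theorem~\ref{t.L2-iso-He1}), and then pass to the inductive limit in $p\in V_\bQ$; the union of these bases, after removing repetitions on $\bA^{p}\setminus\bA^{p_-}$, reproduces (\ref{1-ad-0}) exactly.
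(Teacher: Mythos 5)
Your proposal is correct and takes essentially the same route as the paper: the paper deduces Theorem~\ref{A-00-0} directly from Lemma~\ref{l.H(Q)=L^2(A)} together with the canonical orthonormal basis (\ref{basis-tensor}) of the infinite tensor product with the stabilization (\ref{stab-phi}). It also sketches, exactly as you do at the end, the alternative construction through Lemma~\ref{l.HH^p} and the decomposition (\ref{H^p=otimes}).
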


\begin{Corollary}
\label{A-00-1}
The vectors
\begin{equation}
\label{1-ad+=A-00}
\widetilde{e}_{\alpha}=
\bigotimes\limits_{q\leq m}
e_{\alpha_{q}}^{(q)}
\otimes\bigotimes\limits_{m<q}e^{(q)}, \quad \alpha\in {\widetilde\Lambda},
\end{equation}
form an orthonormal basis in the space
$L^2({{\widetilde\bA}},dx)$, where ${\widetilde\Lambda}$ is the set of multi-indices
$\alpha=(\alpha_p)_{p\in V_\bQ\setminus\{\infty\}}$ such that
$\psi_{\alpha_p}^{(p)}=e^{(p)}$ for sufficiently big $p$ depending on $\alpha$, i.e.,
\begin{equation}
\label{A_(p,m)-phi-00}
\widetilde{\Lambda}=\bigcup_{m\in V_\bQ\setminus\infty}\widetilde{\Lambda}_{m}\quad \text{and}
\quad \widetilde{\Lambda}_m=\{\alpha\in \widetilde\Lambda: \nu_a(\alpha)=m\},
\quad m\in V_\bQ\setminus\infty.
\end{equation}
\end{Corollary}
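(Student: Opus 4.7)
The proof strategy is to mimic exactly the argument yielding Theorem~\ref{A-00-0}, with the Archimedean factor $L^2(\bQ_\infty,dx_\infty)$ simply removed. The plan is to establish, for the non-Archimedean part ${\widetilde\bA}$, an analog of Lemma~\ref{l.H(Q)=L^2(A)}, namely
\begin{equation*}
L^2({\widetilde\bA},dx) \;=\; \bigotimes_{e,\,p\in V_{\bQ}\setminus\{\infty\}} L^2({\bQ}_{p},dx_{p}),
\end{equation*}
with the stabilizing sequence $e=(e^{(p)})_{p\in V_{\bQ}\setminus\{\infty\}}$, \ $e^{(p)}(x_p)=\Omega(|x_p|_p)\in L^2(\bZ_p,dx_p^0)$. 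Once this identification is in place, the assertion is an immediate translation of the general description of an orthonormal basis of an infinite tensor product with a given stabilization (see formula~(\ref{basis-tensor}) and Definition~\ref{d.Lambda}).

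First I would set up the non-Archimedean analog of the filtration (\ref{A^p}). Define
\begin{equation*}
{\widetilde\bA}^p \;=\; \prod_{p'\le p,\,p'\ne\infty}{\bQ}_{p'} \,\times\, \prod_{p'>p}{\bZ}_{p'}, \qquad p\in V_{\bQ}\setminus\{\infty\},
\end{equation*}
and let $\tilde\mu^p$ be the restriction of the Haar measure on ${\widetilde\bA}$ to ${\widetilde\bA}^p$. Since each $\bZ_{p'}$ carries a probability measure $dx_{p'}^0$, we have
\begin{equation*}
L^2({\widetilde\bA}^p,\tilde\mu^p) \;=\; \bigotimes_{p'\le p,\,p'\ne\infty} L^2({\bQ}_{p'},dx_{p'}) \,\otimes\, \bigotimes_{e,\,p'>p} L^2({\bZ}_{p'},dx_{p'}^0),
\end{equation*}
by Theorem~\ref{t.L2-iso-He2} applied to the probability-measure tail. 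Since ${\widetilde\bA}=\bigcup_p{\widetilde\bA}^p$, the general identity (\ref{H=bigcup-n(H-n)}) yields $L^2({\widetilde\bA},dx)=\overline{\bigcup_p L^2({\widetilde\bA}^p,\tilde\mu^p)}$, which by Lemma~\ref{X-in-H} (applied with $H_p=L^2(\bQ_p)$, $X_p=L^2(\bZ_p)$ and $e^{(p)}=\Omega(|\cdot|_p)\in X_p$ for $p\ne\infty$) gives exactly the displayed tensor-product identification.

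Next, I would invoke the description (\ref{basis-tensor}) of the canonical orthonormal basis of an infinite tensor product with stabilization: bases $\{e_{\alpha_p}^{(p)}\}_{\alpha_p\in{\bI}_p}$ in each factor $L^2(\bQ_p,dx_p)$ with $e^{(p)}=\Omega(|\cdot|_p)$ distinguished produce the orthonormal basis consisting of vectors $\bigotimes_{q\le m}e_{\alpha_q}^{(q)}\otimes\bigotimes_{m<q}e^{(q)}$ with $\alpha\in{\widetilde\Lambda}$, where $\alpha_q$ equals the stabilizing index for all sufficiently large $q$. This is the system (\ref{1-ad+=A-00}), and the stratification $\widetilde{\Lambda}=\bigcup_m\widetilde{\Lambda}_m$ in (\ref{A_(p,m)-phi-00}) is the standard partition (\ref{Lambda_n}) by the function $\nu_a$.

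The only genuine point to check is the hypothesis of Lemma~\ref{X-in-H}, i.e.\ that the stabilizing vector $e^{(p)}=\Omega(|\cdot|_p)$ lies in $X_p=L^2(\bZ_p)$ and has unit norm in $L^2(\bQ_p)$ — both are immediate from $\int_{\bZ_p}dx_p=1$. I do not expect a genuine obstacle: once the tensor-product description of $L^2({\widetilde\bA},dx)$ is in hand, Corollary~\ref{A-00-1} is a formal consequence of Theorem~\ref{A-00-0} with the $\infty$-slot deleted. The mildly delicate bookkeeping is only to confirm that the stabilization machinery of Subsec.~\ref{s4.3} carries over to the index set $V_{\bQ}\setminus\{\infty\}$ verbatim, which it does since nowhere in the construction was the existence of an Archimedean place used.
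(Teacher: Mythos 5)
Your proposal is correct and follows exactly the route the paper takes implicitly: the Corollary is stated without a separate proof because it is Theorem~\ref{A-00-0} with the Archimedean slot deleted, resting on the same machinery (the filtration by subgroups as in (\ref{A^p}), Theorem~\ref{t.L2-iso-He2}, the identity (\ref{H=bigcup-n(H-n)}), and Lemma~\ref{X-in-H} with $H_p=L^2(\bQ_p)$, $X_p=L^2(\bZ_p)$, $e^{(p)}=\Omega(|\cdot|_p)$) that the paper uses for Lemma~\ref{l.H(Q)=L^2(A)} and Theorem~\ref{A-00-0}. Your check that the stabilizing vectors are unit vectors in $L^2(\bZ_p)$ and your observation that the Archimedean place played no role in that construction are exactly the points needed, so the argument is complete.
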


\section{Adelic wavelet bases generated by tensor product of one-dimensional wavelet bases}
\label{s5}

To construct an adelic wavelet basis, we apply the above scheme from Subsec.~\ref{s4.5}
to the one-dimensional bases (\ref{m=1-h}), (\ref{62.0-1=111}), (\ref{62.0-1=zz-11}).
In particular, one can use the Haar wavelet bases (\ref{m=1-h}), (\ref{62.0-1=111-K}), (\ref{62.0-1=zz}).
Let
\begin{equation}
\label{real-basis}
\psi_{\alpha_{\infty}}^{(\infty)}(x_{\infty})
=\psi_{j_{\infty}a_{\infty}}^{(\infty)}(x_{\infty})
=\psi_{j_{\infty}a_{\infty}}^{H}(x_{\infty}),
\quad \alpha_{\infty}=(j_{\infty},a_{\infty})\in {\bI}_\infty:=(\bZ, \bZ);
\end{equation}
be the real Haar wavelet basis (\ref{m=1-h}) in $L^2(\bR)$,
\begin{equation}
\label{basis-phi-inQ}
\begin{array}{rcl}
\displaystyle
\psi_{\alpha_{p}}^{(p)}(x_{p})&=&\psi^{(p)}_{k_p; j_p a_p}(x_{p}),
\quad \alpha_{p}=(k_p,j_p,a_p)\in {\bI}_p^+:=({\bF}_p^\times,\bZ_{+},I_p), \\
\displaystyle
\psi_{\alpha_{p}}^{(p)}(x_{p})&=&\psi^{(p)}_{a_p}(x_{p})=\phi^{(p)}(x_p-a_p),
\quad \alpha_{p}=(0,0,a_p)\equiv a_p \in I_p; \\
\end{array}
\end{equation}
be the $p$-adic {\em modified Haar} wavelet basis (\ref{62.0-1=111}) in
$L^2(\bQ_p)$, where $I_p$ is defined by (\ref{62.0**}), and let
\begin{equation}
\label{basis-phi-inZ}
\begin{array}{rcl}
\displaystyle
\psi_{\alpha_{p}}^{(p)}(x_{p})&=&\psi^{(p)}_{k_p;\, j_p a_p}(x_{p}),
\quad \alpha_{p}=(k_p,j_p,a_p)\in {\bI}_p^+=({\bF}_p^\times,\bZ_+,I_p), \\
\displaystyle
\psi^{(p)}_{\alpha_{p}}(x_{p})&=&\psi^{(p)}_{0}(x_{p})=\phi^{(p)}(x_p),
\quad \alpha_{p}=(0,0,0)=0; \\
\end{array}
\end{equation}
be the $p$-adic Haar wavelet basis (\ref{62.0-1=zz-11}) in $L^2(\bZ_p)$,
which is the restriction of the basis (\ref{basis-phi-inQ}) to $\bZ_p$.
We recall that the restrictions of some basis elements (\ref{basis-phi-inQ})
are equal to zero (see Remark~\ref{r.restrict} and Proposition~\ref{lem-zz-1}).
{\em Here and in what follows, for a stabilizing sequence we take} (\ref{stab-phi}).

Hence we have the following bases in $L^2(\bQ_p)$ and in $L^2(\bZ_p)$, respectively,
\begin{equation}
\label{basis-111}
\psi_{\alpha_{p}}^{(p)}, \quad \alpha_{p}\in
{\bI}_p={\bI}_p^+\bigcup I_p; \qquad \psi_{\alpha_{p}}^{(p)},
\quad \alpha_{p}\in {\bI}_p^{(0)}:={\bI}_p^+\bigcup\{0\}\subset {\bI}_p,
\end{equation}
where $\{\psi_{\alpha_{p}}^{(p)}\}_{\alpha_{p}\in {\bI}_p^{(0)}}$
is the projection of $\{\psi_{\alpha_{p}}^{(p)}\}_{\alpha_{p}\in {\bI}_p}$
on $L^2(\bZ_p)$.
Now, using Lemma~\ref{l.H(Q)=L^2(A)}, we obtain the following orthonormal wavelet
basis in the space $L^2({\bA},dx)$ (see (\ref{1-ad-0})):
\begin{equation}
\label{1-ad+=11}
\Psi_\alpha=\Psi_{(\widehat{k},\widehat{j},\widehat{a})}(x)
=\psi_{\alpha_{\infty}}^{(\infty)}
\otimes\bigotimes\limits_{q\leq m}\psi_{\alpha_{q}}^{(q)}
\otimes\bigotimes\limits_{m<q}\phi^{(q)},
\quad \alpha\in \Lambda=\bigcup_{m\in V_\bQ}\Lambda_{m}.
\end{equation}
and the orthonormal wavelet basis in the space $L^2({\widetilde\bA},dx)$ (see (\ref{1-ad+=A-00}))
\begin{equation}
\label{1-ad+=11-00}
\widetilde{\Psi}_{\alpha}=\widetilde{\Psi}_{(\widehat{k},\widehat{j},\widehat{a})}(x)
=\bigotimes\limits_{2\le q\leq m}\psi_{\alpha_{q}}^{(q)}
\otimes\bigotimes\limits_{m<q}\phi^{(q)}, \quad \alpha\in \widetilde{\Lambda},
\end{equation}
with the stabilization $\phi=(\phi^{(p)})_{p\in V_\bQ\setminus\infty}$.
Here $\widehat{k}=(k_p)_p$, $\widehat{j}=(j_p)_p$, $\widehat{a}=(a_p)_p$
(see (\ref{basis-phi-inQ})).

\begin{Remark}
\label{basis-Drag}
We note that in~\cite[Sec.4, formula (4.1)]{Dr-2} the basis in the space $L^2(\bA,dx)$ is
constructed as follows
$$
\psi_{\alpha,\beta}=\psi^\infty_{n_0}(x_\infty)\prod_{p=2,3,\dots}\psi_{\alpha_p,\beta_p}(x_p),
$$
where $\psi^\infty_{n_0}(x_\infty)$ and $\psi_{\alpha_p,\beta_p}(x_p)$ are orthonormal
eigenfunctions {\rm(}of harmonic oscillators{\rm)} in the real and $p$-adic cases, respectively, where
$\psi_{\alpha_p,\beta_p}(x_p)=\Omega\big(|x_{p}|_p\big)$  for almost all $p$ .
\end{Remark}
Let us introduce the adelic set of shifts $I_{\bA}$ and dilations $\bZ_{\bA}$:
$$
I_{\bA}=\big\{\widehat{a}=(a_{\infty},a_{2},\dots,a_{p},\dots):
a_{\infty}\in I_{\infty}=\bZ,\, a_{p}\in I_p,
\qquad\qquad\qquad\qquad\quad
$$
\begin{equation}
\label{ad-sh}
\quad \text{there exists} \,\, n \,\, \text{depending on $\widehat{a}$ such that} \,
a_{p}=0 \, \text{for all} \, p> n \big\}.
\end{equation}
$$
\bZ_{\bA}=\big\{\widehat{j}=(j_{\infty},j_{2},\dots,j_{p},\dots):
j_{\infty}\in \bZ,\, j_{p}\in \bZ,
\qquad\qquad\qquad\qquad\qquad\qquad
$$
\begin{equation}
\label{ad-dil}
\quad \text{there exists}\,\, n \,\, \text{depending on $\widehat{j}$ such that} \, j_{p}=0
\, \text{for all} \, p> n \big\}.
\end{equation}
Set $m(\widehat{a})=\min\{p:a_{q}=0 \, \text{for all} \, q>p\}$ and
$m(\widehat{j}\,)=\min\{p:j_{q}=0 \, \text{for all} \, q>p\}$.

On the space $L^2(\bA)$ we define the operators of {\em shifts} $T_{\widehat{a}}$,
$\widehat{a}\in \bZ_{\bA}$, and {\em multi-dilation} $M^{\widehat{j}}$,
$\widehat{j}\in \bZ_{\bA}$ which are defined as the infinite tensor product
of one-dimensional operators (\ref{M^j_VQ}):
\begin{equation}
\label{add-m-d}
M^{\widehat{j}}=M^{-j_{\infty}}_{\infty}\otimes\bigotimes\limits_{2\leq q\le m}
M^{j_q}_{q}\otimes\bigotimes \limits_{q>m}Id_q,
\end{equation}
where $m=m(\widehat{j}\,)$ and $Id_q$ is the identity operator on
$L^2(\bQ_q)$.
We suppose that the operators $M^j_\infty$ and $M^j_p$ in the spaces $L^2(\bQ_\infty)$ and $L^2(\bQ_p)$
(on the functions $f^{(\infty)}\in L^2(\bQ_\infty)$ and $f^{(p)}\in L^2(\bQ_p)$)
act as follows
$$
(M^j_\infty f^{(\infty)})(x_\infty)=2^{-j/2}f^{(\infty)}(2^{-j}x_{\infty}),
\quad
(M^j_p f^{(p)})(x)=p^{-j/2}f^{(p)}(p^{j}x_{p}).
$$

Let
$f(x)=\otimes_{q\in V_{\bQ}}f_{q}(x_{q})\in L^2(\bA)$,
where $f^{(\infty)}\in L^2(\bQ_\infty)$, $f^{(q)}\in L^2(\bQ_q)$, $q \ge 2$,
and $f^{(q)}=\phi^{(q)}$ for almost all  $q > m$. 
Then
\begin{equation}
\label{ad-shift}
\big(T_{\widehat{a}}f\big)(x)\stackrel{def}{=}f(x-\widehat{a})
=f_{\infty}(x_{\infty}-a_{\infty})\otimes\bigotimes\limits_{2\le q}f_{q}(x_{q}-a_{q}),
\qquad\qquad\qquad\qquad
\end{equation}
$$
\big(M^{\widehat{j}}f\big)(x)
\stackrel{def}{=}2^{-j_{\infty}/2}f^{(\infty)}(2^{-j_{\infty}}x_{\infty})\otimes
\qquad\qquad\qquad\qquad\qquad\qquad\qquad
$$
\begin{equation}
\label{ad-del}
\qquad
\otimes\Big(\bigotimes\limits_{2\le q\le m}q^{-j_q/2}f^{(q)}(q^{j_q}x_{q})\Big)\otimes
\Big(\bigotimes\limits_{m<q}f^{(q)}(x_{q})\Big),
\end{equation}
$$
\big(M^{\widehat{j}}T_{\widehat{a}}f\big)(x)
\stackrel{def}{=}2^{-j_{\infty}/2}f^{(\infty)}(2^{-j_{\infty}}x_{\infty}-a_{\infty})\otimes
\qquad\qquad\qquad\qquad\qquad\qquad\quad
$$
\begin{equation}
\label{ad-del-1}
\qquad
\otimes\Big(\bigotimes\limits_{2\le q\le m}q^{-j_q/2}f^{(q)}(q^{j_q}x_{q}-a_{q})\Big)
\otimes\Big(\bigotimes\limits_{m<q}f^{(q)}(x_{q})\Big).
\end{equation}
In the latter relation, we assume that $m=m(\widehat{j}\,)=m(\widehat{a}\,)$.

Now one can obtain the adelic wavelet basis functions $\Psi_\alpha$
given in (\ref{1-ad+=11}) by all {\em adelic shifts and dilations}
of wavelet functions
\begin{equation}
\label{1-ad+=13}
\Psi_{(\widehat{k},0,0)}:=\psi^{(\infty)}_{(0,0)}(x_{\infty})
\otimes \bigotimes\limits_{q\leq m}\psi^{(q)}_{(k_q,\,0\,0)}(x_{q})
\otimes \bigotimes\limits_{m<q}\phi^{(q)}(x_q).
\end{equation}
Namely,
$$
\Psi_{\alpha}(x)=:\Psi_{(\widehat{k},\widehat{j},\widehat{a})}(x)
=\big(M^{\widehat{j}}T_{\widehat{a}}\Psi_{(\widehat{k},0,0)}\big)(x),
$$
where $\widehat{k}=(0,k_{2},\dots,k_{m},0,0,\dots)$, $k_q\in {\bF}_q^\times$;
$\widehat{a}=(a_{\infty},a_{2},\dots,a_{m},0,0,\dots)$, $a_{\infty}\in \bZ$, $a_q\in I_q$;
$\widehat{j}=(j_{\infty},j_{2},\dots,j_{m},0,0,\dots)$, $j_{\infty}\in \bZ$,
$j_q\in \bZ_{+}$; $m\in V_{\bQ}\setminus\infty$. We stress that here $m(\widehat{a})=m(\widehat{j})=m(\widehat{k})=m$.

The wavelet systems (\ref{1-ad+=11}) described above is of
considerable interest and can be useful in various situations.
Nevertheless, they do not possess all the advantages of
one-dimensional wavelet bases, in particular, the localization
property, which is of great value for applications. In the
one-dimensional case, the real Haar basis functions with large
indices $j_{\infty}\in \bZ$ and the $p$-adic Haar basis functions with
large indices $j_p\in \bZ_{+}$, $p=2,3,\dots$, have small supports.
Thus the support of the multidimensional basis function may be large
along one or several directions and small along some other directions.
To avoid these drawbacks, we shall use a different approach. Its main
idea is using the tensor product of the MRAs generating these bases
instead of the tensor product of available wavelet bases.

\section{Separable adelic MRA generated by tensor product of one-dimensional MRAs}
\label{s6}

Using the idea of constructing separable multidimensional MRA by means
of the tensor product of one-dimensional MRAs (suggested by Y.~Meyer~\cite{Meyer-1}
and S.~Mallat~\cite{Mallat-1},~\cite{Mallat-2} (see, e.g.,~\cite[\S 2.1]{NPS}),
we construct wavelet bases for the space $L^2(\bA,dx)$.

We start with some general facts. Let we have a collection of
closed subspaces $V_j^{(k)}$, $j\in\bZ$, in a Hilbert space
$H_k,\,\,k=1,2,\dots,m$, having the properties (a), (b) and (c) of
the Definition \ref{de1-Real}: (a) $V_j^{(k)}\subset
V_{j+1}^{(k)}$ for all $j\in\bZ,\,\,k=1,2,\dots,m$; (b)
$H_k=\overline{\bigcup_{j\in\bZ}V_j^{(k)}}$, (c)
$\bigcap\limits_{j\in\bZ}V_j^{(k)}=\{0\}$.
We use the following notations for the {\em finite tensor product}:
\begin{equation}
\label{61-ad1}
H^{[m]}=\bigotimes_{k=1}^m H_k,\,\, V_{j+1}^{(k)}=V_j^{(k)}\oplus
W_j^{(k)}, \quad j\in \bZ,\quad k=1,2,\dots,m,
\end{equation}
$$
V_{j+1}^{[m]}=\bigotimes_{k=1}^m  V_{j+1}^{(k)}=\bigotimes_{k=1}^m (V_j^{(k)}\oplus W_j^{(k)})
=V_j^{[m]}\oplus W_j^{[m]},
$$
where
$$
V_j^{[m]}=\bigotimes_{k=1}^m V_{j}^{(k)},\qquad
W_j^{[m]}=\bigoplus\limits_{(i_1,i_2,\dots,i_m;j)\in\{1,2\}^m\setminus \{1,1,\dots,1\}}
W_{(i_1,i_2,\dots,i_m;j)},
$$
\begin{equation}
\label{ad-wwww-11}
W_{(i_1,i_2,\dots,i_m;j)}
=Z^{(1)}_{i_1,j}\otimes Z^{(2)}_{i_2,j}\otimes\cdots\otimes Z^{(m)}_{i_m,j},\,\,
\end{equation}
and
\begin{equation}
\label{Z^1_23=}
Z^{(k)}_{1,j}=V^{(k)}_j,\,\, Z^{(k)}_{2,j}=W^{(k)}_j,\,\,k=1,2,\dots,m.
\end{equation}
For the {\em infinite tensor product} we keep the similar notations:
$$
H={\cH}_e=\bigotimes\limits_{e,n\in
{\bN}}H_n,\quad
H^{[m]}=\otimes_{k=1}^m H_k\bigotimes e^{(m+1)}\otimes  e^{(m+2)}\otimes\cdots ,\quad m\in \bN.
$$
$$
V_{j+1}^{[m]}=\bigotimes_{k=1}^m  V_{j+1}^{(k)}\otimes e^{(m+1)}\otimes  e^{(m+2)}\otimes\cdots
\qquad\qquad\qquad\qquad\qquad\qquad\qquad\qquad
$$
$$
\qquad\qquad
=\bigotimes_{k=1}^m (V_j^{(k)}\oplus W_j^{(k)})\otimes e^{(m+1)}\otimes  e^{(m+2)}\otimes\cdots
=V_j^{[m]}\oplus W_j^{[m]},
$$
where
\begin{equation}
\label{V_j^[m]}
V_j^{[m]}=\bigotimes_{k=1}^m V_{j}^{(k)}\otimes e^{(m+1)}\otimes e^{(m+2)}\otimes\cdots,
\end{equation}
$$
W_j^{[m]}=\bigoplus\limits_{(i_1,i_2,\dots,i_m)\in\{1,2\}^m\setminus \{1,1,\dots,1\}}
W_{(i_1,i_2,\dots,i_m;j)}
$$
\begin{equation}
\label{aa}
W_{(i_1,i_2,\dots,i_m;j)}
=Z^{(1)}_{i_1,j}\otimes Z^{(2)}_{i_2,j}\otimes\cdots\otimes Z^{(m)}_{i_m,j}
\otimes e^{(m+1)}\otimes  e^{(m+2)}\otimes\cdots.
\end{equation}

Define the space $V_j$ as an appropriate limit of the spaces $V_j^{[m]}$
\begin{equation}
\label{V^j=limV^{m}}
V_{j}=\overline{{\rm span}(V_j^{[m]}: m\in\bN)}.
\end{equation}
\begin{Theorem}
\label{th0-ad-mra}
Let subspaces $\{V_j^{(n)}\}_{j\in\bZ}$ of the space $H_n,\,\,n\in \bN$
satisfy the following properties:

{\rm(a)} $V_j^{(n)}\subset V_{j+1}^{(n)}$ for all $j\in\bZ$ and $n\in \bN $;

{\rm(b)} $\bigcup_{j\in\bZ}V_j^{(n)}$ is dense in $H_n$;

{\rm(c)} $\bigcap_{j\in\bZ}V_j^{(n)}=\{0\}$ in $H_n$.

Then the subspaces $V_j,j\in \bZ$ of the space ${\cH}_e=\otimes_{e,n\in {\bN}}H_n$
defined by {\rm(\ref{V^j=limV^{m}})} satisfy the following properties:

{\rm(aH)} $V_j\subset V_{j+1}$ for all $j\in\bZ$;

{\rm(bH)} $\bigcup_{j\in\bZ}V_j$ is dense in ${\cH}_e=\otimes_{e,n\in {\bN}}H_n$;

{\rm(cH)} $\bigcap_{j\in\bZ}V_j=\{0\}$.
\end{Theorem}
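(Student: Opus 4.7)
Property (aH) is immediate from hypothesis (a): $V_j^{(n)}\subset V_{j+1}^{(n)}$ implies $V_j^{[m]}\subset V_{j+1}^{[m]}$ for every $m$, and taking linear span then closure preserves the inclusion. For (bH), I would prove density on the orthonormal basis $(e_\alpha)_{\alpha\in\Lambda}$ of $\cH_e$. Fix $\alpha$ with $\nu(\alpha)=m$. Given $\varepsilon>0$, use hypothesis (b) to choose $v_k\in V_{j_k}^{(k)}$ with $\|v_k-e_{\alpha_k}^{(k)}\|_{H_k}<\varepsilon/m$ for each $1\le k\le m$; setting $J=\max_k j_k$, monotonicity from (a) ensures $v_k\in V_J^{(k)}$ for every $k\le m$, so $v_1\otimes\cdots\otimes v_m\otimes e^{(m+1)}\otimes\cdots\in V_J^{[m]}\subset V_J$. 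A standard telescoping estimate on tensor-product norms shows this element is within $O(\varepsilon)$ of $e_\alpha$, whence $e_\alpha\in\overline{\bigcup_j V_j}$ and density follows.

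For (cH) my strategy is to show $\|P_{V_j}e_\alpha\|\to 0$ as $j\to-\infty$ for each basis vector $e_\alpha$. For then $f\in\bigcap_j V_j$ gives $(f,e_\alpha)=(f,P_{V_j}e_\alpha)$, whose absolute value is bounded by $\|f\|\cdot\|P_{V_j}e_\alpha\|\to 0$, forcing $(f,e_\alpha)=0$ for every $\alpha$ and hence $f=0$. The preliminary input is strong convergence $P_j^{(n)}\to 0$ as $j\to-\infty$: since $V_j^{(n)}$ is monotonically decreasing as $j$ decreases, the projections $P_j^{(n)}$ converge strongly to the projection onto $\bigcap_j V_j^{(n)}=\{0\}$ by hypothesis (c).

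The main obstacle is that $V_j=\overline{{\rm span}(V_j^{[m]}:m\in\bN)}$ is a closed linear span of subspaces that are in general not nested, since $e^{(k)}$ need not lie in $V_j^{(k)}$. I would handle this with a Gram--Schmidt-type orthogonalization: set $U_j^{[1]}=V_j^{[1]}$ and inductively $U_j^{[m]}=V_j^{[m]}\ominus P_{V_j^{[m]}}(V_j^{[1]}+\cdots+V_j^{[m-1]})$; one verifies that the $U_j^{[m]}$ are pairwise orthogonal and $V_j=\bigoplus_m U_j^{[m]}$. Any unit $g\in V_j$ then decomposes orthogonally as $g=\sum_m g^m$ with $g^m\in U_j^{[m]}\subset V_j^{[m]}$ and $\sum_m\|g^m\|^2=1$, so Cauchy--Schwarz applied to $(e_\alpha,g^m)=(P_{V_j^{[m]}}e_\alpha,g^m)$ yields $|(e_\alpha,g)|^2\le\sum_m\|P_{V_j^{[m]}}e_\alpha\|^2$. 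Because $P_{V_j^{[m]}}=P_j^{(1)}\otimes\cdots\otimes P_j^{(m)}\otimes P_{e^{(m+1)}}\otimes\cdots$, one obtains the explicit factorization
$$
\|P_{V_j^{[m]}}e_\alpha\|^2=\prod_{k=1}^{m_0}\|P_j^{(k)}e_{\alpha_k}^{(k)}\|^2\cdot\prod_{k=m_0+1}^m\|P_j^{(k)}e^{(k)}\|^2\qquad(m\ge m_0=\nu(\alpha)),
$$
while $\|P_{V_j^{[m]}}e_\alpha\|=0$ for $m<m_0$ in the generic case $\alpha_{m_0}\ne 1$. The prefactor $\prod_{k=1}^{m_0}\|P_j^{(k)}e_{\alpha_k}^{(k)}\|^2$ tends to zero as $j\to-\infty$ by the strong convergence of the $P_j^{(k)}$, and I expect the principal remaining technical step to be summability of the tail series $\sum_m\prod_{k=m_0+1}^m\|P_j^{(k)}e^{(k)}\|^2$---this is the step where the compatibility of the stabilizing sequence $(e^{(n)})$ with the subspaces $V_j^{(n)}$ as $n\to\infty$ enters and must be exploited using hypothesis (b) and the admissibility of $e$ as a stabilization for $\cH_e$.
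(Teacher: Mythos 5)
Your (aH) and (bH) are correct and essentially coincide with the paper's arguments. The genuine gap is in (cH), at the claimed orthogonalization $V_j=\bigoplus_m U_j^{[m]}$ with $U_j^{[m]}\subset V_j^{[m]}$: Gram--Schmidt at the level of \emph{subspaces} does not preserve spans. Already in $\bR^2\otimes\bR^2$, take $V^{[1]}={\rm span}(f)\otimes{\rm span}(e^{(2)})$ and $V^{[2]}={\rm span}(f)\otimes{\rm span}(g)$ with $g=\cos\theta\, e^{(2)}+\sin\theta\, h$, $h\perp e^{(2)}$, $0<\theta<\pi/2$; then $P_{V^{[2]}}(V^{[1]})=V^{[2]}$, so $U^{[2]}=\{0\}$ and $U^{[1]}\oplus U^{[2]}=V^{[1]}$, strictly smaller than ${\rm span}(V^{[1]},V^{[2]})={\rm span}(f)\otimes\bR^2$. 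Since the spaces $V_j^{[m]}$ are not mutually orthogonal, a unit vector $g\in V_j$ need not decompose as $g=\sum_m g^m$ with $g^m\in V_j^{[m]}$ and $\sum_m\|g^m\|^2\le 1$, so your key inequality $|(e_\alpha,g)|^2\le\sum_m\|P_{V_j^{[m]}}e_\alpha\|^2$ is unjustified. Moreover, even granting it, the tail series $\sum_m\prod_{k=m_0+1}^m\|P^{(k)}_je^{(k)}\|^2$ will in general diverge (nothing prevents $\|P^{(k)}_je^{(k)}\|\to1$ as $k\to\infty$ for fixed $j$), and hypotheses (a)--(c) contain no compatibility condition between the stabilizing sequence $e$ and the subspaces $V_j^{(n)}$ that you could exploit at that point, so the step you defer cannot be carried out as stated.

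The repair is much cheaper, and it is essentially what the paper does: you need decay in only \emph{one} coordinate, not summability over all of them. Every $V_j^{[m]}$ has its first factor inside $V_j^{(1)}$, hence $V_j\subset V_j^{(1)}\otimes\bigotimes_{e,n\ge2}H_n$; the paper sharpens this by computing ${\rm span}(V_j^{[k]}:k\le m)=V_j^{(1)}\otimes V_j^{(2)}(e)\otimes\cdots\otimes V_j^{(m)}(e)\otimes e^{(m+1)}\otimes\cdots$ with $V_j^{(n)}(e)={\rm span}(e^{(n)},V_j^{(n)})$, and bounding $V_j\subset V_j^{(1)}\otimes\bigotimes_{e,n\ge 2}V_0^{(n)}$ for $j\le 0$. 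The right-hand side of the crude bound is the range of $P_{V_j^{(1)}}\otimes Id$, and as $j\to-\infty$ these projections decrease strongly to $P_{\cap_j V_j^{(1)}}\otimes Id=0$ by hypothesis (c) applied to $n=1$ alone; therefore $\bigcap_j V_j=\{0\}$. (Your preliminary observations --- strong convergence $P^{(n)}_j\to 0$, the factorization $P_{V_j^{[m]}}=P^{(1)}_j\otimes\cdots\otimes P^{(m)}_j\otimes P_{e^{(m+1)}}\otimes\cdots$, and the reduction of (cH) to $\|P_{V_j}e_\alpha\|\to0$ --- are all correct; it is only the route from there, via the false direct-sum decomposition and the non-summable tail, that breaks down.)
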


\begin{proof}
{\rm(aH)} Since $V_j^{(n)}\subset V_{j+1}^{(n)}$ for all $n \in \bN$,
we conclude that $V_j^{[m]}\subset  V_{j+1}^{[m]},\,\,m\in\bN$ (see
(\ref{V_j^[m]})), hence $V_{j}=\overline{{\rm span}(V_j^{[m]}:
m\in\bN)}\subset \overline{{\rm span}(V_{j+1}^{[m]}: m\in\bN)}=
V_{j+1}$.

{\rm(bH)} It is clear that for any fixed $m\in \bN$ the space $\bigcup_{j\in\bZ}V_j^{[m]}$
is dense in the space
$$
H^{[m]}=\bigotimes_{k=1}^m H_k\otimes e^{(m+1)}\otimes  e^{(m+2)}\otimes\cdots
$$
hence all vectors $e_\alpha,\,\,\alpha \in  \Lambda_m$ of the basis (\ref{bas-1})
are contained in $\overline{\cup_{j\in\bZ}V_j^{[m]}}$. To finish the proof,
we note that all elements of the basis
$e_\alpha, \alpha\in \Lambda$ in ${\cH}_e=\otimes_{e,n\in {\bN}}H_n$
are given by (\ref{bas-1}), where $\Lambda=\cup_{m\in \bN}\Lambda_m$.

{\rm(cH)} 1. Let us suppose that for any $m\in\bN$, we have
\begin{equation}
\label{cap-V-j-[m]}
\cap_{j\in\bZ}V_j^{[m]}=\{0\}.
\end{equation}
2. In this case we get
\begin{equation}
\label{cap-nV-n(H)=0}
\cap_{j\in\bZ}V_j=\cap_{j\in\bZ}\overline{{\rm span}(V_j^{[m]}: m\in\bN)}\subset
\overline{{\rm span}(\cap_{j\in\bZ}V_j^{[m]}: m\in\bN)}=\{0\}.
\end{equation}

1. To prove (\ref{cap-V-j-[m]}), we shall use (c). We set $W_j^{(n)}=V_{j+1}^{(n)}\ominus V_j^{(n)}$
for $j\in\bZ$ and $n\in \bN$. Then we get
\begin{equation}
\label{W-k(H)+V-k(H)}
H_n=\oplus_{k\in\bZ}W_k^{(n)}, \quad\text{and}\quad
V_j^{(n)}=\oplus_{k=-\infty}^{j-1}W_k^{(n)}.
\end{equation}
Let $(e^{(n)}_{\alpha_n})_{\alpha_n\in I_k^{(n)}}$ be an orthonormal
basis in $W_k$, then by construction,
$(e^{(n)}_{\alpha_n})_{\alpha_n\in I^{(n)}}$ is an orthonormal basis
in $H_n$, where $I^{(n)}=\bigcup_{k\in\bZ}I_k^{(n)}$. Define
$\Lambda(I)$ (see Definition (\ref{d.Lambda})) as the set of
multi-indices $\alpha=(\alpha_n)_{n\in \bN},$ $\alpha_n\in I^{(n)}$
such that $e_{\alpha_n}^{(n)}=e^{(n)}$ for sufficiently big $n$ depending
on $\alpha$, here $e=(e^{(n)})_{n\in\bN}$ is a stabilizing sequence. Using
(\ref{W-k(H)+V-k(H)}) and (\ref{V_j^[m]}), we get
$$
V_j^{[m]}=\Big(\bigotimes_{k=1}^m V_{j}^{(k)}\Big)\otimes e^{(m+1)}\otimes e^{(m+2)}\otimes\cdots
\qquad\qquad\qquad\qquad\qquad
$$
$$
\qquad\qquad\qquad\qquad
=\bigotimes_{k=1}^m \Big(\bigoplus_{n=-\infty}^{j-1}W_n^{(k)}\Big)
\otimes e^{(m+1)}\otimes e^{(m+2)}\otimes\cdots,
$$
so the basis in the space $V_j^{[m]}$ can be chosen as
$e_\alpha,\,\,\alpha\in \Lambda_{m,j}(I)$, where
$$
e_\alpha=e^{(1)}_{\alpha_1}\otimes e^{(2)}_{\alpha_2}
\otimes\cdots\otimes e^{(m)}_{\alpha_m}\otimes
e^{(m+1)}\otimes\cdots,
$$
$$
\Lambda_{m,j}(I)=\Big(
\alpha\in\Lambda(I):\alpha=(\alpha_n)_{n\in\bN} ,\quad
(\alpha_1,\dots,\alpha_m)\in(\bigcup_{k=-\infty}^{j-1}I_k^{(1)},
\dots,\bigcup_{k=-\infty}^{j-1}I_k^{(m)})\Big).
$$
Note that $\bigcap_{j\in \bZ}\Lambda_{m,j}(I)
=\lim_{j\to-\infty}\big(\bigcup_{k=-\infty}^{j-1}I_k^{(1)},
\dots,\bigcup_{k=-\infty}^{j-1}I_k^{(m)}\big)=\varnothing$,
this imply (\ref{cap-V-j-[m]}) (compare also with Lemma \ref{bigcap(V-j)=0}
below, where $\Lambda_{1,j}(I)=(-\infty,j-1]\cap\bZ$). Indeed, for
$f\in V_j^{[m]}$ we have
$$
f=\sum_{\alpha\in
\Lambda_{m,j}(I)}c_\alpha e_\alpha\quad \text{ with} \quad\Vert
f\Vert^2=\sum_{\alpha\in \Lambda_{m,j}(I)}\vert
c_\alpha\vert^2<\infty.
$$
If $f\in \bigcap_{j\in \bZ}V_j^{[m]}$ then
$$
\Vert f\Vert^2=\lim_{j\to-\infty}\sum_{\alpha\in
\Lambda_{m,j}(I)}\vert c_\alpha\vert^2=\sum_{\alpha\in
\varnothing}\vert c_\alpha\vert^2=0.$$

2. To prove (\ref{cap-nV-n(H)=0}), using (\ref{V_j^[m]}) we get
\begin{align*}
V_j^{[m]}&=\otimes_{k=1}^m V_{j}^{(k)}\otimes e^{(m+1)}\otimes e^{(m+2)}\otimes\cdots,\quad
m\in\bN,\\
V_j^{[1]}&=V_{j}^{(1)}\otimes e^{(2)}\otimes\cdots\otimes e^{(m)}\otimes e^{(m+1)}\otimes
\cdots,\\
V_j^{[2]}&=V_{j}^{(1)}\otimes V_{j}^{(2)}\otimes\cdots\otimes e^{(m)}\otimes e^{(m+1)}\otimes
\cdots,\\
V_j^{[m]}&=V_{j}^{(1)}\otimes V_{j}^{(2)}\otimes\cdots \otimes V_{j}^{(m)}\otimes e^{(m+1)}\otimes
\cdots.
\end{align*}
Denote by $V_{j}^{(k)}(e)={\rm span}(e^{(k)},V_{j}^{(k)})$ for $k-1\in \bN$, then we get
\begin{align*}
{\rm span}(V_j^{[1]},V_j^{[2]})&=V_{j}^{(1)}\otimes V_{j}^{(2)}(e)\otimes e^{(3)}\otimes\cdots
\otimes e^{(m)}\otimes e^{(m+1)}\otimes \cdots,\\
{\rm span}(V_j^{[1]},V_j^{[3]},V_j^{[2]})&=V_{j}^{(1)}\otimes V_{j}^{(2)}(e)
\otimes V_{j}^{(3)}(e)\otimes\cdots\otimes e^{(m)}\otimes e^{(m+1)}\otimes \cdots,\\
{\rm span}(V_j^{[k]},k=2,\dots,m)&=
V_{j}^{(1)}\otimes V_{j}^{(2)}(e)\otimes V_{j}^{(3)}(e)\otimes\cdots\otimes V_{j}^{(m)}(e)
\otimes e^{(m+1)}\otimes \cdots.
\end{align*}
Since $V_{j}^{(n)}(e)={\rm span}(e^{(n)},V_{j}^{(n)})\subset V_{0}^{(n)}$ for $j\le 0$, we 
conclude that
$$
V_j=\overline{{\rm span}(V_j^{[k]}:k\in\bN)}=
\overline{V_{j}^{(1)}\otimes\bigotimes\limits_{e,n\geq 2}V_{j}^{(n)}(e)}=
V_{j}^{(1)}\otimes\bigotimes\limits_{e,n\geq 2}V_{j}^{(n)}(e)\subset
V_{j}^{(1)}\bigotimes\limits_{e,n\geq 2}V_{0}^{(n)}.
$$
So
$$
\bigcap_{j\in \bZ}V_j=\bigcap_{j\in \bZ}\overline{{\rm span}(V_j^{[k]}:k\in\bN)}\subset
\qquad\qquad\qquad\qquad\qquad\qquad\qquad\qquad
$$
$$
\subset
\bigcap_{j\in \bZ}\big(V_{j}^{(1)}\otimes\bigotimes\limits_{e,n\geq 2}V_{0}^{(n)}\big)
=\bigcap_{j\in \bZ}\big(V_{j}^{(1)}\big)\otimes\bigotimes\limits_{e,n\geq 2}V_{0}^{(n)}=\{0\}.
$$
\end{proof}

\begin{Remark}\rm
\label{r.V(n)} We can give another definition of the spaces
$V_j,\,\,j\in\bZ$, namely, for fixed $n\in \bN$ denote by
$V_j(n),\,\,j\in\bZ$ the following family of spaces
\begin{equation}
\label{V(n)} 
V_{j}(n)=\overline{{\rm span}(V_j^{[m]}: m\in\bN,\,\,m\geq n)}.
\end{equation}
For this system of spaces, an analog of Theorem~\ref{th0-ad-mra}
holds. Moreover, in the proof of (cH) we have the following formula
{\rm(}see the end of the proof of the Theorem~\ref{th0-ad-mra}):
$$
\bigcap_{j\in \bZ}V_j(n)\subset \bigcap_{j\in
\bZ}\Big(\bigotimes\limits_{k=1}^nV_{j}^{(k)}\Big)\otimes\bigotimes\limits_{e,k > n}V_{0}^{(k)}=\{0\}.
$$
\end{Remark}

We would like to present the following elementary lemma.
\begin{Lemma}
\label{bigcap(V-j)=0}
Let we have the decomposition of the Hilbert space $H=\oplus_{k\in\bZ}W_k$
and let $(e_\alpha)_{\alpha\in I_k}$ be an orthonormal basis in the space $W_k$.
Set $V_j:=\oplus_{k=-\infty}^{j-1}W_k$, then $\bigcap_{j\in\bZ}V_j=\{0\}$.
\end{Lemma}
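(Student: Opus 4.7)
The plan is to argue by a direct basis expansion. Take any $f\in\bigcap_{j\in\bZ}V_j$. Since the collection $\bigl(e_\alpha\bigr)_{\alpha\in I}$, with $I=\bigcup_{k\in\bZ}I_k$, is an orthonormal basis of $H=\bigoplus_{k\in\bZ}W_k$, we may expand
\[
f=\sum_{k\in\bZ}\sum_{\alpha\in I_k}c_\alpha e_\alpha,\qquad \|f\|^2=\sum_{k\in\bZ}\sum_{\alpha\in I_k}|c_\alpha|^2<\infty.
\]

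For each fixed $j\in\bZ$, the membership $f\in V_j=\bigoplus_{k=-\infty}^{j-1}W_k$ combined with orthogonality of the spaces $W_k$ forces the coefficients $c_\alpha$ to vanish for every $\alpha\in I_k$ with $k\ge j$. Hence $\|f\|^2=\sum_{k<j}\sum_{\alpha\in I_k}|c_\alpha|^2$ for every $j\in\bZ$. Letting $j\to-\infty$ and invoking convergence of the series, we obtain $\|f\|^2=0$, so $f=0$.

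There is no real obstacle here; the only point worth stating cleanly is that the tail sums $\sum_{k<j}\sum_{\alpha\in I_k}|c_\alpha|^2$ form a decreasing sequence in $j$ bounded below by $0$, whose limit as $j\to-\infty$ equals $0$ by the tail-vanishing property of convergent series, and which nevertheless always equals $\|f\|^2$. Equating these gives the conclusion. This lemma will then feed into the verification of property (cH) in Theorem~\ref{th0-ad-mra}, via the identification $\Lambda_{1,j}(I)=(-\infty,j-1]\cap\bZ$ noted in the text preceding the statement.
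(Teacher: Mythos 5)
Your proof is correct and is essentially the same argument as the paper's: expand $f$ in the orthonormal basis $(e_\alpha)$, observe that $f\in V_j$ for every $j$ forces $\Vert f\Vert^2$ to equal the tail sum $\sum_{k<j}\sum_{\alpha}|c_\alpha|^2$, and let $j\to-\infty$ to get $\Vert f\Vert=0$. Your explicit remark that the coefficients with $k\ge j$ vanish and that the tail of a convergent series tends to zero just makes the paper's one-line limit computation more detailed.
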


\begin{proof}
By construction, $(e_\alpha)_{\alpha\in J}$ is an orthonormal basis in $H$,
where $J=\bigcup_{k\in\bZ}J_k$. We have for any $f\in H$
\begin{equation}
\label{norma(f)}
f=\sum_{\alpha\in J}c_\alpha e_\alpha =
\sum_{k\in\bZ}\left(\sum_{\alpha\in J_k}
c_\alpha e_\alpha\right)\,\, \text{and}\,\,
\Vert f\Vert^2=\sum_{k\in\bZ}\left(\sum_{\alpha\in J_k}
c_\alpha^2 \right)<\infty.
\end{equation}
If $f\in V_{n}=\oplus_{k=-\infty}^{n-1}W_k$ for all $n\in \bZ$, using
(\ref{norma(f)}), we conclude that
$$
\Vert f\Vert^2=\lim_{n\to-\infty}\sum_{k=-\infty}^{n-1}
\left(\sum_{\alpha\in J_k}\vert c_\alpha\vert^2 \right)=0.
$$
\end{proof}

Let $\{V_j^{(\nu)}\}_{j\in\bZ}$ be one-dimensional MRA {\rm(}see Definitions~{\rm\ref{de1-Real},~\ref{de1}}{\rm)},
and let $\phi^{(\nu)}$ be its refinable function, $\nu \in V_\bQ$. We introduce the spaces (see (\ref{V_j^[m]}))
\begin{equation}
\label{d33-1}
V_j^{[m]}=\bigotimes_{\nu\in \{\infty,2,3,\dots,m\}}V_{j}^{(\nu)}\otimes\phi^{(m_{+})}\otimes\cdots
\subset L^2(\bA,dx), \quad m\in V_{\bQ}, \quad j\in \bZ.
\end{equation}

Now we introduce in $L^2(\bA,dx)$ the delation operator $M^j$, which
is defined by its projection $M^j_{(m)}$ on any subspace $V_j^{[m]}$
(for details, see Remark~\ref{A=lim-m-A-(m)}):
\begin{equation}
\label{ad-sh-dd}
M^j_{(m)}=M^{-j}_\infty\otimes\bigotimes\limits_{2\le q\leq m }M^{j}_{q}
\otimes\bigotimes\limits_{q>m}Id_q, \quad m\in V_{\bQ}\setminus\infty,
\end{equation}
where $Id_q$ is the identity operator in $L^2(\bQ_q)$.
For $f\in L^2(\bA)$ we define $M^{j}_{(m)}f$ and $(M^{-j}_{(m)}T_{\widehat{a}})f$
similarly to (\ref{ad-del}) and (\ref{ad-del-1}), respectively, where $I_{\bA}$ is given by (\ref{ad-sh}).
As before, we assume that $m=m(\widehat{a}\,)$.

\begin{Theorem}
\label{th1-ad-mra}
Let $\{V_j^{(\nu)}\}_{j\in\bZ}$ be the one-dimensional MRA {\rm(}see Definitions~{\rm\ref{de1-Real},~\ref{de1}}{\rm)},
and let $\phi^{(\nu)}$ be its refinable function, $\nu \in V_\bQ$. Let
\begin{equation}
\label{d3-1}
\Phi(x)=\phi^{(\infty)}(x_{\infty})\otimes\phi^{(2)}(x_{2})\otimes\cdots\otimes\phi^{(p)}
(x_{p})\otimes\cdots, \quad x\in \bA,
\end{equation}
and
$$
V_j=\overline{{\rm span}(V_j^{[m]}: m\in V_{\bQ})}
\qquad\qquad\qquad\qquad\qquad\qquad\qquad\qquad
$$
\begin{equation}
\label{d3-1-11}
=\overline{{\rm span}
\{\big(M^{-j}_{(m)}T_{\widehat{a}}\Phi\big)(\cdot):\, \widehat{a}\in
I_{\bA}, \, m(\widehat{a})=m\in V_{\bQ}\}}, \quad j\in\bZ.
\end{equation}
Then
\begin{equation}
\label{d3-1-12}
V_j=\bigotimes\limits_{e;\nu \in V_\bQ} V^{(\nu)}_j \subset L^2(\bA,dx),
\quad j\in\bZ_{+},
\end{equation}
{\rm(}where $e=(\phi^{(\nu)})_{\nu\in V_{\bQ}\setminus {\infty}}$ is the
{\em stabilization sequence} {\rm(\ref{stab-phi})} and the
subspaces~{\rm(\ref{d3-1-11})} satisfy the following properties:

{\rm(a)} $V_j\subset V_{j+1}$ for all $j\in\bZ$;

{\rm(b)} $\cup_{j\in\bZ}V_j$ is dense in $L^2(\bA,dx)$;

{\rm(c)} $\cap_{j\in\bZ}V_j=\{0\}$;

{\rm(d)} $f(\cdot)\in V_j^{[m]} \Longleftrightarrow
\big(M^{-1}_{(m)}f\big)(\cdot)\in V_{j+1}^{[m]}$
for all $j\in\bZ$, for any $m\in V_{\bQ}$;

{\rm(e)} the function $\Phi\in V_0$ is such that the system
$\{\Phi(x-\widehat{a}): \, \widehat{a}\in I_{\bA}\}$ is an orthonormal basis for $V_0$.
\end{Theorem}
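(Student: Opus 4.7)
The plan is to view this theorem as essentially a corollary of Theorem~\ref{th0-ad-mra} combined with Lemma~\ref{l.H(Q)=L^2(A)}. Taking $H_\nu = L^2(\bQ_\nu, dx_\nu)$ (reindexed from $n\in\bN$ to $\nu\in V_\bQ$), the closed subspaces $V_j^{(\nu)}\subset H_\nu$, and the stabilizing sequence $e=(\phi^{(\nu)})_{\nu\in V_\bQ}$ from (\ref{stab-phi}), hypotheses (a)--(c) of Theorem~\ref{th0-ad-mra} coincide exactly with axioms (a)--(c) of the one-dimensional MRAs (Definitions~\ref{de1-Real} and~\ref{de1}). Since $\bigotimes_{e;\nu}L^2(\bQ_\nu)=L^2(\bA,dx)$ by Lemma~\ref{l.H(Q)=L^2(A)}, the conclusions (aH), (bH), (cH) of Theorem~\ref{th0-ad-mra} are precisely axioms (a), (b), (c) of the present statement.

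Before invoking this, I would first justify the representation (\ref{d3-1-12}). Since $\phi^{(\nu)}\in V_0^{(\nu)}\subset V_j^{(\nu)}$ for $j\ge 0$, the stabilizing vectors belong to the respective factors, so $\bigotimes_{e;\nu}V_j^{(\nu)}$ is a well-defined closed subspace of $L^2(\bA)$; repeating the calculation at the end of the proof of Theorem~\ref{th0-ad-mra} with $V_j^{(\nu)}(e)=V_j^{(\nu)}$ yields $\overline{\mathrm{span}(V_j^{[m]}:m\in V_\bQ)}=\bigotimes_{e;\nu}V_j^{(\nu)}$. The equality with the second formula in (\ref{d3-1-11}) follows from (\ref{d33-1}) together with the observation that $M^{-j}_{(m)}T_{\widehat{a}}\Phi$ is, coordinate-by-coordinate, the pure tensor of rescaled and shifted one-dimensional refinable functions, whose closed linear span over those $\widehat{a}\in I_\bA$ with $m(\widehat{a})=m$ recovers $V_j^{[m]}$.

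For axiom (d), on an elementary tensor $f=\bigotimes_{\nu\le m}f^{(\nu)}\otimes\bigotimes_{\nu>m}\phi^{(\nu)}\in V_j^{[m]}$ the operator $M^{-1}_{(m)}$ defined by (\ref{ad-sh-dd}) acts factor-wise, and the one-dimensional dilation axioms (d) of Definitions~\ref{de1-Real} and~\ref{de1} send $M_\infty^1 f^{(\infty)}$ into $V_{j+1}^{(\infty)}$ and $M_q^{-1}f^{(q)}$ into $V_{j+1}^{(q)}$ for $2\le q\le m$, while the stabilizing tail is untouched; linear extension and closure give (d). For axiom (e), each one-dimensional family $\{\phi^{(\nu)}(\cdot-a)\}_{a\in I_\nu}$ (with $I_\infty=\bZ$) is an orthonormal basis of $V_0^{(\nu)}$, so the stabilized-tensor-product basis construction of Subsec.~\ref{s4.1} (see (\ref{basis-tensor})) supplies an orthonormal basis for $V_0=\bigotimes_{e;\nu}V_0^{(\nu)}$ consisting of pure tensors $\bigotimes_\nu\phi^{(\nu)}(\cdot-a_\nu)$ in which $a_\nu=0$ (i.e.\ the factor equals the stabilizing vector) for all but finitely many $\nu$. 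By the definition (\ref{ad-sh}) of $I_\bA$, this family coincides with $\{\Phi(x-\widehat{a}):\widehat{a}\in I_\bA\}$.

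The main technical obstacle I anticipate is axiom (c): one must justify that the intersection over $j$ commutes through the closed span over $m$ defining $V_j$. This is exactly the content of the last part of the proof of Theorem~\ref{th0-ad-mra}, where the augmented factor spaces $V_j^{(\nu)}(e)=\mathrm{span}(e^{(\nu)},V_j^{(\nu)})$ are sandwiched inside $V_0^{(\nu)}$ for $j\le 0$, so that $V_j\subset V_j^{(\infty)}\otimes\bigotimes_{e,\nu\ge 2}V_0^{(\nu)}$ and $\bigcap_j V_j=\{0\}$ follows from $\bigcap_j V_j^{(\infty)}=\{0\}$ for the one-dimensional real MRA. Everything else is a routine translation of the finite-tensor-product MRA formalism to the stabilized infinite setting.
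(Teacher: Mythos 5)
Your proposal is correct, and it covers all the components of the theorem, but for axioms (a)--(c) it follows the route the paper merely cites rather than the one it writes out. Your treatment of (\ref{d3-1-12}), (d) and (e) is essentially the paper's Part~1: orthonormality of the shifts $\{\Phi(\cdot-\widehat a):\widehat a\in I_{\bA}\}$ from the one-dimensional bases plus the stabilization (\ref{stab-phi}), the identification $V_j=\bigotimes_{e;\nu}V_j^{(\nu)}$ for $j\in\bZ_{+}$ via (\ref{otimes-mm}), and the factor-wise use of the one-dimensional axioms (d). For (a), (b), (c) the paper does note that they follow from Theorem~\ref{th0-ad-mra} --- exactly your argument, which is legitimate here because $e^{(\nu)}=\phi^{(\nu)}\in V_0^{(\nu)}$ (needed in the proof of (cH)) and Lemma~\ref{l.H(Q)=L^2(A)} identifies ${\cH}_e(\bQ)$ with $L^2(\bA,dx)$ --- but it then gives independent, concrete proofs: (a) by applying the refinement equation (\ref{m=62.0-3}) factor-wise to $\Phi(x-\widehat a)$, (b) by approximating an arbitrary $g\in L^2(\bA,dx)$ by finitely many adelic wavelet basis elements (\ref{1-ad+=11}) and then approximating each one-dimensional factor inside some $V_j^{(\nu)}$, and (c) by showing that any $g\in\bigcap_j V_j$ is orthogonal to every $\Psi_\alpha$, contradicting completeness of the basis of Sec.~\ref{s5}. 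Your abstract route buys economy (everything reduces to the general tensor-product statement), while the paper's independent arguments buy self-containedness and make the connection with the explicit adelic Haar wavelets visible; both are valid. One small caveat: your assertion that $M^{1}_\infty f^{(\infty)}$ lands in $V_{j+1}^{(\infty)}$ inherits the paper's own sign inconsistency between (\ref{ad-sh-dd}) and (\ref{ad-del-1}), (\ref{otimes-mm}); with the normalization actually used in (\ref{otimes-mm}) the real component of $M^{-1}_{(m)}$ acts as $f\mapsto 2^{1/2}f(2\,\cdot)$, which is exactly what the real axiom (d) requires, so the intended factor-wise argument goes through unchanged.
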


\begin{proof}
1. For each $\nu\in V_\bQ$, the system of functions $\{\phi^{(\nu)}(\cdot-a_{\nu})\}_{a_{\nu}\in I_{\nu}}$
is an orthonormal basis of the space $V^{(\nu)}_0$ (see axioms $(e)$ in
Definitions~\ref{de1-Real},~\ref{de1}). Using the construction of
Sec.~\ref{s4} and taking into account that any shift
$\widehat{a}=(a_{\infty},a_{2},\dots,a_{p},\dots)\in I_{\bA}$ is
{\em finite} and $\phi^{(\nu)}\in V^{(\nu)}_0\subset L^2(\bQ_\nu,dx_\nu)$
is a {\it stabilization sequence} (\ref{stab-phi}) in $L^2({\bA},dx)$,
we conclude that the system $\{\Phi(x-\widehat{a}): \,
\widehat{a}\in I_{\bA}\}$ is an orthonormal basis in $V_0$ and
$$
V_0=\bigotimes\limits_{e;\,\nu \in V_\bQ}V^{(\nu)}_0 \subset L^2(\bA,dx).
$$
Using definition (\ref{ad-del-1}) we get for $m(\widehat{a})=m$.
$$
\big(M^{-j}_{(m)}T_{\widehat{a}}\Phi\big)(x)
=2^{j/2}\phi^{(\infty)}(2^{j}x_{\infty}-a_{\infty})\otimes
\qquad\qquad\qquad\qquad\qquad\qquad\qquad\qquad\qquad
$$
\begin{equation}
\label{otimes-mm}
\otimes
\Big(\bigotimes\limits_{\nu=2}^{m}\nu^{j/2}\phi^{(\nu)}(\nu^{-j}x_{\nu}-a_{\nu})\Big)
\otimes\phi^{(m_{+})}(x_{m_{+}})\otimes\cdots \in V_j^{[m]}\subset V_j
\end{equation}
for any $\widehat{a}\in I_{\bA}$, $m\in V_{\bQ}$.

Since $\{\nu^{j/2}\phi^{(\nu)}(\nu^{-j}x_{\nu}-a_{\nu}):a_{\nu}\in
I_\nu\}$ is the basis in $V_j^{(\nu)}$ and $\phi^{(\nu)}\in
V^{(\nu)}_0\subset V_j^{(\nu)}$ for $j\in \bN$, using
(\ref{d3-1-11}) and (\ref{otimes-mm}), we conclude that
$$
V_j=\bigotimes\limits_{e;\,\nu \in V_\bQ}V^{(\nu)}_j \subset L^2(\bA,dx), \quad j\in \bN.
$$

Taking into account axioms $(d)$ in Definitions~\ref{de1-Real},~\ref{de1},
one can see that Definitions (\ref{d3-1-11}), (\ref{ad-sh-dd}) imply that
$f\in V_0^{[m]}$ if and only if $M^{-j}_{(m)}f(\,\cdot)\in V_j^{[m]}$, $j\in \bN$, $m\in V_{\bQ}$.

Thus representation (\ref{d3-1-12}) and properties $(e)$ and $(d)$ hold.

2. Due to~\cite[Ch.~I, \S 2.2, Theorem~2.2]{Ber86}, we immediately obtain that
property $(a)$ holds for $j\in \bN$.
For any $j\in \bZ$ the property $(a)$ holds by Theorem \ref{th0-ad-mra}.
Here we give an independent proof.
According to (\ref{d3-1-11}),
$V_0=\overline{{\rm span}\{\Phi\big(x-\widehat{a}\big):\, \widehat{a}\in I_{\bA}\}}$,
where $\Phi\big(x-\widehat{a}\big)=\phi^{(\infty)}(x_{\infty}-a_{\infty})\otimes\phi^{(2)}(x_{2}-a_{2})
\otimes\cdots\otimes\phi^{(p)}(x_{p}-a_{p})\otimes\phi^{(p_{+})}(x_{p_{+}})\otimes\cdots$.
Applying the {\em refinement equation} (\ref{m=62.0-3}) to any factor of the above product and
taking into account that the numbers $\frac{a_{\nu}}{\nu}, \frac{a_{\nu}}{\nu}+\frac{r}{\nu}\in I_{\nu}$
for all $a_{\nu}\in I_{\nu}$ and $r=1,2,\dots,\nu-1$, \, $\nu=2,3,\dots$, we conclude
that $V_0\subset V_1$. By Definition (\ref{d3-1-11}) of the spaces $V_j$ and property
$(d)$, this yields property $(a)$.

3. The property $(b)$ holds by Theorem \ref{th0-ad-mra}. Here we give an independent
proof. It is clear that any element $g\in L^2(\bA,dx)$ can be approximated
by a finite linear combination of the basis elements $\Psi_{\alpha}(x)=\Psi_{(\widehat{k};
\widehat{j}\ \widehat{a})}(x)$ of $L^2(\bA,dx)$ given by (\ref{1-ad+=11}). At the same
time any element $\Psi_{\alpha}(x)=\Psi_{(\widehat{k}; \widehat{j}\ \widehat{a})}(x)$
is a finite product of the one-dimensional basis elements
(\ref{real-basis})--(\ref{basis-111}):
\begin{align*}
\psi_{\alpha_{\infty}}^{(\infty)}(x_{\infty})&
=\psi_{j_{\infty}a_{\infty}}^{(\infty)}(x_{\infty})\in L^2(\bQ_{\infty}),&\alpha_{\infty}\in (\bZ,\bZ), \\
\psi_{\alpha_{q}}^{(q)}(x_{q})&=\psi^{(q)}_{(k_q; j_q a_q)}(x_{q})
\in L^2(\bQ_{q}),&\alpha_{p}\in {\bI}_p=({\bF}_p^\times,\bZ_+,I_p)\bigcup I_p, \\
\psi_{\alpha_{q}}^{(q)}(x_{q})&=\psi^{(q)}_{(k_q; j_q a_q)}(x_{q})
\in L^2(\bZ_{q}),&\alpha_{p}\in {\bI}_p^{(0)}=({\bF}_p^\times,\bZ_+,I_p)\bigcup\{0\}.
\end{align*}
In turn, it follows from the completeness property for each MRA
$\{V_j^{(\nu)}\}_{j\in\bZ}$ (see axioms (b) in Definitions~\ref{de1-Real},~\ref{de1})
that any element $f_{\infty}\in L^2(\bQ_{\infty})$ and $f_{\nu}\in L^2(\bQ_{\nu})$
can be approximated by a finite linear combination of the functions
$2^{j/2}\phi^{(\infty)}(2^j\cdot-a_{\infty})$, $a_{\infty}\in I_{\infty}$, and
$\nu^{j/2}\phi^{(\nu)}(\nu^{-j}\cdot-a_{\nu})$,
$a_{\nu}\in I_{\nu}$, respectively, for sufficiently large $j$ ($j\in \bZ_{+}$),
$\nu\in\{2,3,\dots,p\}$. Hence any basis element
$\Psi_{\alpha}(x)=\Psi_{(\widehat{k}; \widehat{j}\ \widehat{a})}(x)\in L^2(\bA,dx)$
can be approximated by an element of the space $V_j$ for some
sufficiently large $j$ ($j\in \bZ_{+}$).
This implies that the element $g$ can be also approximated by an element
of the space $V_j$ for some $j$, i.e., the collection of all the
spaces $V_j$, $j\in\bZ$, is dense in $L^2(\bA,dx)$.

4. The property $(c)$ holds by Theorem \ref{th0-ad-mra}. Here we give an independent
proof. Let us prove that the intersection of all $V_j$ does
not contain a nonzero element. Assume that there exists an element
$g\in L^2(\bA,dx)$ such that $g\in V_j$ for all $j\in\bZ$ and
$\|g\|\ne0$. According to Sec.~\ref{s5}, the wavelet functions
$\Psi_{\alpha}=\Psi_{(\widehat{k}; \widehat{j}\ \widehat{a})}\in
L^2(\bA,dx)$ given by (\ref{1-ad+=11}) form an orthonormal basis in
$L^2(\bA,dx)$, then $g$ can be represented as $g=\sum_{\alpha\in
\Lambda}c_{\alpha}\Psi_\alpha\in V_j$ for all $j\in\bZ$, where
the coefficients $c_{\alpha}$ are independent of $j$. Here
$\alpha=(\alpha_p)_{p\in V_{\bQ}}=(\widehat{k}; \widehat{j}\
\widehat{a})$, $\alpha_{\infty}\in (\bZ,\bZ)$,
$\alpha_{p}\in {\bI}_p=({\bF}_p^\times,\bZ_+,I_p)\bigcup I_p$, $p=2,3,\dots$.

Consider a basis element $\Psi_{\alpha}=\Psi_{(\widehat{k};
\widehat{j}\ \widehat{a})}$,
$\widehat{j}=(j_{\infty},j_{2},\dots,j_{m},0,0,\dots)$. Let us
take the index $j_\alpha={\rm min}(0,j_{\infty},j_{2},j_{3},\dots,j_{m})$.
For any wavelet function given by the first formula in (\ref{basis-phi-inQ}), we
have $\psi^{(p)}_{\alpha_p}=\psi^{(p)}_{k_p; j_p a_p}\perp
V_0^{(p)}$, $j_p\in \bZ_+$, and $V_j^{(p)}\subset V_0^{(p)}$,
$p=2,3,\dots$. In view of (\ref{61-ad-Haar}), (\ref{61.1-ad-Haar})
we have
$\psi_{\alpha_{\infty}}^{(\infty)}=\psi_{j_{\infty}}^{(\infty)}\perp V_j^{(\infty)}$
for $j< j_{\infty}$. Consequently, the basis element
$\Psi_\alpha=\Psi_{(\widehat{k}; \widehat{j}\ \widehat{a})}\perp V_j$,
$j<j_\alpha$, i.e., $g \perp \Psi_\alpha$. In such a way we prove that
$g \perp \Psi_\alpha$ for all indices $\alpha\in \Lambda$. The fact
that the system $\{\Psi_\alpha:\alpha\in \Lambda\}$ is an orthonormal basis in
$L^2(\bA,dx)$ contradicts the assumption $\|g\|\ne0$.
\end{proof}

The {\em collection of the spaces} $V_j$, $j\in\bZ_{+}$, satisfying conditions $(a)$--$(e)$
of Theorem~\ref{th1-ad-mra} will be called the {\em adelic separable MRA in the space}
$L^2(\bA)$. The function (\ref{d3-1}) is a {\em refinable function} of this MRA.

Next, following the standard finite-dimensional scheme (see
~\cite[Ch.~10.1]{Daub}, \cite[\S 2.1]{NPS}), we define
the wavelet space $W_j$ as the orthogonal complement of $V_j$ in $V_{j+1}$:
\begin{equation}
\label{ad-wwww}
W_j=V_{j+1}\ominus V_j, \quad j\in\bZ_{+}.
\end{equation}
It follows from properties $(a)$ and $(b)$ of Theorem~\ref{th1-ad-mra} that
(cf. (\ref{61.1-ad-Haar}) and (\ref{61.1-ad}))
$$
L^2(\bA)=V_0\oplus\bigoplus\limits_{j\in\bZ_{+}}W_j.
$$

Let us present some evident formulas in $\otimes_{k=1}^nH_k$. Namely, we have
\begin{equation}
\label{otimes*oplus}
(a_{11}\oplus a_{12})\otimes\cdots
\otimes(a_{n1}\oplus a_{n2})=\bigoplus\limits_{(i_1,\dots,i_n)\in\{1,2\}^n}a_{1i_1}
\otimes\cdots\otimes a_{ni_n},
\end{equation}
and similarly,
\begin{equation}
\label{otimes*oplus-m+}
(\oplus _{k=1}^m a_{1k})\otimes\cdots\otimes(\oplus _{k=1}^m a_{nk})
=\bigoplus\limits_{(i_1,\dots,i_n)\in\{1,\dots,m\}^n}a_{1,i_1}\otimes\cdots\otimes a_{n,i_n},
\end{equation}

Since according to (\ref{61-ad-Haar}), (\ref{61.1-ad-Haar}), (\ref{61-ad})--(\ref{61.1-ad}),
we have $V^{(\nu)}_{j+1}=V^{(\nu)}_j\oplus W^{(\nu)}_j$, setting
$Z^{(\nu)}_{1,j}=V^{(\nu)}_j$ and $Z^{(\nu)}_{2,j}=W^{(\nu)}_j$,
using (\ref{otimes*oplus}), (\ref{otimes*oplus-m+}), and taking into account
(\ref{otimes-mm}), we obtain
$$
V_{j+1}=\bigotimes\limits_{\nu\in V_\bQ}V_{j+1}^{(\nu)}
=\bigotimes\limits_{\nu\in V_\bQ}\big(V^{(\nu)}_j\oplus W^{(\nu)}_j\big)
\qquad\qquad\qquad\qquad\qquad\qquad\qquad\qquad
$$
$$
\qquad
=V_j\oplus
\Big(\bigoplus\limits_{p\in V_\bQ} \,
\bigoplus\limits_{(i_\infty,i_2,\dots,i_p)\in\{1,2\}^{\sharp(p)+1}\setminus \{1,1,\dots,1\}}
W_{(i_\infty,i_2,\dots,i_p),j}\Big), \quad j\in\bZ_{+},
$$
where we set $\sharp(p)=\sharp\{2,3,\dots,p\}$ ($\sharp(A)$ is the number of elements of the set $A$), and
\begin{equation}
\label{ad-wwww-1}
W_{\widehat{i}_p,j}^{[p]}=W_{(i_\infty,i_2,\dots,i_p),j}^{[p]}
=Z^{(\infty)}_{i_\infty,j}\otimes Z^{(2)}_{i_2,j}\otimes\cdots\otimes Z^{(p)}_{i_p,j}
\otimes\phi^{(p_{+})}\otimes\cdots,
\end{equation}
$\widehat{i}_p=(i_\infty,i_2,\dots,i_p)\in\{1,2\}^{\sharp(p)+1}\setminus \{1,1,\dots,1\}$.
Thus the space $W_j$ given by (\ref{ad-wwww}) is a direct sum of infinite number of subspaces
(\ref{ad-wwww-1}).

Thus we have
\begin{equation}
\label{w-62.8=10**-ad-ss}
L^2(\bA)=V_0\oplus
\bigoplus\limits_{j\in\bZ_{+}}W_j=\bigoplus\limits_{j\in\bZ_{+}}
\Big(\bigoplus\limits_{p\in V_\bQ} \bigoplus\limits_{
\widehat{i}_p\in\{1,2\}^{\sharp(p)+1}\setminus \{1,1,\dots,1\}}
W_{\widehat{i}_p,j}^{[p]}\Big).
\end{equation}

It is clear that an orthonormal basis for the space
$$
W_{0}=\bigoplus\limits_{p\in V_\bQ}
\bigoplus\limits_{\widehat{i}_p=(i_\infty,i_2,\dots,i_p)\in\{1,2\}^{\sharp(p)+1}\setminus \{1,1,\dots,1\}}
W_{(i_\infty,i_2,\dots,i_p),0}^{[p]}
$$
where
$$
W_{\widehat{i}_p,0}^{[p]}=W_{(i_\infty,i_2,\dots,i_p),0}^{[p]}
=Z^{(\infty)}_{i_\infty,0}\otimes Z^{(2)}_{i_2,0}\otimes\cdots\otimes Z^{(p)}_{i_p,0}
\otimes\phi^{(p_{+})}\otimes\cdots,
$$
is formed by the shifts (with respect to $\widehat{a}\in I_{\bA}$) of the functions
\begin{equation}
\label{m=wd-62-ad}
\Psi_{\widehat{k},\widehat{i}_p}:=
\Psi_{\widehat{k},(i_\infty,i_2,\dots,i_p)}
=\vartheta^{(\infty)}_{i_\infty}\otimes
\vartheta^{(2)}_{i_2}\otimes\cdots\otimes \vartheta^{(p)}_{i_p}
\otimes\phi^{(p_{+})}\otimes\cdots,
\end{equation}
where all factors in this product starting from a certain prime $p$
constitute a {\it stabilization sequence} $(\phi^{(\nu)})_{\nu> p}$; \,
$\vartheta^{(\infty)}_{1}=\psi^H$, $\vartheta^{(\infty)}_{2}=\phi^H$ (see (\ref{m=1-h}), (\ref{m=3-h})); \,
$\vartheta^{(\nu)}_{1}=\psi^{(\nu)}_{k_{\nu}}$, $k_{\nu}\in \bF_{\nu}^{\times}$,
\, $\vartheta^{(\nu)}_{2}=\phi^{(\nu)}$, (see (\ref{62.0-1=111}), (\ref{62.0-1=zz-11})
or (\ref{62.0-1=111-K}), (\ref{62.0-1=zz})), $\nu=2,3,\dots,p$; $p=2,3,\dots$;
$\widehat{i_p}=(i_\infty,i_2,\dots,i_p)\in\{1,2\}^{\sharp(p)+1}\setminus \{1,1,\dots,1\}$.

According to (\ref{w-62.8=10**-ad-ss}), an orthonormal Haar wavelet basis in $L^2(\bA)$
is formed by all shifts $\widehat{a}\in I_{\bA}$ of the refinable function (\ref{d3-1})
$$
\phi^{(\infty)}(x_{\infty})\phi^{(2)}(x_{2})\cdots\phi^{(p_-)}(x_{p_-})\phi^{(p)}(x_{p})\phi^{(p_+)}(x_{p_+})\cdots,
$$
and all shifts $\widehat{a}\in I_{\bA}$ and dilations $j\in\bZ_{+}$ of wavelet functions (\ref{m=wd-62-ad}):
\begin{equation}
\label{w-62.8=10**-ad}
\begin{array}{lll}
\displaystyle
\psi^{(\infty)}(x_{\infty})\psi_{k_2}^{(2)}(x_2)\cdots\psi_{k_{p_{-}}}^{(p_{-})}(x_{p_{-}})\psi_{k_p}^{(p)}(x_p)
\phi^{(p_{+})}(x_{p_{+}})\cdots, && \\
\displaystyle
\psi^{(\infty)}(x_{\infty})\psi_{k_2}^{(2)}(x_2)\cdots\psi_{k_{p_{-}}}^{(p_{-})}(x_{p_{-}})\phi^{(p)}(x_p)
\phi_{k_{p_{+}}}^{(p_{+},0)}(x_{p_{+}})\cdots, && \\
\hdotsfor{1} \\
\displaystyle
\phi^{(\infty)}(x_{\infty})\psi_{k_2}^{(2)}(x_2)\cdots\psi_{k_{p_{-}}}^{(p_{-})}(x_{p_{-}})\psi_{k_p}^{(p)}(x_p)
\psi_{k_{p_{+}}}^{(p_{+},0)}(x_{p_{+}})\cdots, && \\
\hdotsfor{1} \\
\displaystyle
\psi^{(\infty)}(x_{\infty})\phi^{(2)}(x_2)\cdots\phi^{(p_{-})}(x_{p_{-}})\phi^{(p)}(x_p)
\phi^{(p_{+})}(x_{p_{+}})\cdots, && \\
\hdotsfor{1} \\
\displaystyle
\phi^{(\infty)}(x_{\infty})\phi^{(2)}(x_2)\cdots\phi^{(p_{-})}(x_{p_{-}})\psi_{k_p}^{(p)}(x_p)
\phi^{(p_{+})}(x_{p_{+}})\cdots,
\end{array}
\end{equation}
where $p=2,3,\dots$;
the real Haar wavelet function $\psi^{(\infty)}(x_{\infty})$ and refinable function
$\phi^{(\infty)}(x_{\infty})=\Omega(x_{\infty})$ are given by (\ref{m=2-h}) and (\ref{m=3-h}),
$x_{\infty}\in \bQ_{\infty}$; the $r$-adic Haar wavelet functions $\psi^{(r)}_{k_r}(x_r)$, $k_r\in \bF^{\times}$,
are given by formulas (\ref{p-101**})--(\ref{p-101***}), the refinable function
$\phi^{(r)}(x_r)=\Omega\big(|x_r|_r\big)$, $x_r\in \bQ_r$; \ $r=2,3,\dots$.

Thus, this wavelet basis has the form
\begin{equation}
\label{w-62.8=10**-ad-ss-1}
T_{\widehat{a}}\Phi,
\quad
\Psi_{\widehat{k},\widehat{i}_p; j \, \widehat{a}}
:=(2\cdot2\cdot3\cdots p)^{j/2}
\big(M^{-j}_{(p)}T_{\widehat{a}}\big)\,\Psi_{\widehat{k},\widehat{i}_p},
\end{equation}
where
$\widehat{i}_p=(i_\infty,i_2,\dots,i_p)\in\{1,2\}^{\sharp(p)+1}\setminus
\{1,1,\dots,1\}$, $\widehat{k}=(k_2,\dots,k_p)$, $k_s\in \bF_{s}^{\times}$,
$s=2,3,\dots,p$; $j\in\bZ_{+}$;
$\widehat{a}=(a_{\infty},a_{2},\dots,a_{p})\in I_{\bA}$, $p\in V_\bQ$.
We stress that here $p(\widehat{a})=p(j)=p(\widehat{k})=p$.

Due to Theorem~\ref{p-th4} and~\cite{Al-Ev-Sk}, by formula (\ref{w-62.8=10**-ad-ss-1})
all Haar bases generated by adelic Haar MRA are described.

\begin{Corollary}
\label{A-00-1-1}
Let {\rm(}see~{\rm(\ref{m=wd-62-ad}))}
\begin{equation}
\label{m=wd-62-ad-A-0}
{\widetilde\Psi}_{\widehat{k},\widehat{i}_p'}(x'):=
{\widetilde\Psi}_{\widehat{k},(i_2,\dots,i_m)}
=\vartheta^{(2)}_{i_2}\otimes\cdots\otimes \vartheta^{(m)}_{i_m}
\otimes\phi^{(m_{+})}\otimes\cdots.
\end{equation}
The vectors {\rm(}see~{\rm(\ref{w-62.8=10**-ad-ss-1}))}
\begin{equation}
\label{w-62.8=10**-ad-ss-1-A-0}
T_{\widehat{a}'}\Phi,
\quad
{\widetilde\Psi}_{\widehat{k},\widehat{i}'_p; j \, \widehat{a}'}:=(2\cdot3\cdots p)^{j/2}
\big(M^{-j}_{(p)}T_{\widehat{a}}\big)\,\Psi_{\widehat{k},\widehat{i}'},
\end{equation}
form the orthonormal basis in the space $L^2({\widetilde\bA})$, where
$\widehat{i}'_p=(i_2,\dots,i_p)\in\{1,2\}^{\sharp(p-1)+1}\setminus \{1,\dots,1\}$,
$\widehat{k}=(k_2,\dots,k_p)$, $k_s\in \bF_{s}^{\times}$,
$s=2,3,\dots,m$; $j\in\bZ_{+}$; $\widehat{a}'=(a_{2},\dots,a_{p})\in I_{\bA}$,
$p\in  V_\bQ\setminus\infty$.
We stress that here $p(\widehat{a}')=p(j)=p(\widehat{k}')=p$.
\end{Corollary}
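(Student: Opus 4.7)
The plan is to repeat, mutatis mutandis, the construction leading from Theorem~\ref{th1-ad-mra} to the adelic basis (\ref{w-62.8=10**-ad-ss-1}), but with the real factor $L^2(\bR)$ suppressed so that the index $\infty$ is dropped everywhere. The only inputs I need are the tensor-product description of $L^2(\widetilde{\bA})$ and the properties of the one-dimensional $p$-adic MRAs from Sec.~\ref{s3.2}.

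First I would establish the analog of Lemma~\ref{l.H(Q)=L^2(A)}: namely
\begin{equation*}
L^2(\widetilde{\bA},dx')=\bigotimes_{e,\,p\in V_\bQ\setminus\infty} L^2(\bQ_p,dx_p),
\end{equation*}
with stabilizing sequence $e=(\phi^{(p)})_{p\in V_\bQ\setminus\infty}$, $\phi^{(p)}(x_p)=\Omega(|x_p|_p)$. This is immediate from Lemma~\ref{X-in-H} applied with $H_p=L^2(\bQ_p)$, $X_p=L^2(\bZ_p)$, upon noting that $\widetilde{\bA}=\bigcup_{p}\widetilde{\bA}^p$ for $\widetilde{\bA}^p=\prod_{q\leq p}\bQ_q\times\prod_{q>p}\bZ_q$, exactly paralleling (\ref{A^p}).

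Next I would introduce the separable MRA on $L^2(\widetilde{\bA})$ by setting, in analogy with (\ref{d3-1-11}),
\begin{equation*}
\widetilde{V}_j=\overline{{\rm span}(\widetilde{V}_j^{[m]}:m\in V_\bQ\setminus\infty)},\qquad
\widetilde{V}_j^{[m]}=\bigotimes_{2\leq\nu\leq m}V_j^{(\nu)}\otimes\phi^{(m_+)}\otimes\cdots,
\end{equation*}
with refinable function $\widetilde\Phi(x')=\bigotimes_{p\in V_\bQ\setminus\infty}\phi^{(p)}(x_p)$. Theorem~\ref{th0-ad-mra} then supplies properties $(a)$--$(c)$, and properties $(d)$--$(e)$ follow as in the first step of the proof of Theorem~\ref{th1-ad-mra} (the shifts $T_{\widehat a'}\widetilde\Phi$, $\widehat a'\in I_{\widetilde\bA}$, form an o.n.b.\ of $\widetilde V_0$ because each factor does in $V_0^{(p)}$ and the stabilization is preserved under finite shifts).

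Finally I would define $\widetilde W_j=\widetilde V_{j+1}\ominus\widetilde V_j$ and apply the distributive identities (\ref{otimes*oplus}), (\ref{otimes*oplus-m+}) to $\widetilde V_{j+1}=\bigotimes_{p\geq 2}(V^{(p)}_j\oplus W^{(p)}_j)$. This yields a decomposition of $\widetilde W_0$ into the subspaces $\widetilde W^{[p]}_{\widehat i'_p,0}$ indexed by $\widehat i'_p\in\{1,2\}^{\sharp(p-1)+1}\setminus\{1,\dots,1\}$, exactly mirroring (\ref{ad-wwww-1}) but without the $\infty$-coordinate. An o.n.b.\ for $\widetilde W^{[p]}_{\widehat i'_p,0}$ is given by the shifts of the vectors $\widetilde\Psi_{\widehat k,\widehat i'_p}$ in (\ref{m=wd-62-ad-A-0}), because in each tensor factor either $\{\psi_{k_\nu}^{(\nu)}(\cdot-a_\nu)\}$ is an o.n.b.\ of $W_0^{(\nu)}$ (by Theorem~\ref{p-th4}) or $\{\phi^{(\nu)}(\cdot-a_\nu)\}$ is an o.n.b.\ of $V_0^{(\nu)}$, and the remaining factors are stabilizers. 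Applying the dilations $M^{-j}_{(p)}$ and shifts $T_{\widehat a'}$ and summing over $j\in\bZ_+$ then exhausts $L^2(\widetilde\bA)=\widetilde V_0\oplus\bigoplus_{j\in\bZ_+}\widetilde W_j$.

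The only non-routine point, and the one I would scrutinise most carefully, is the stabilization bookkeeping: one must check that for each fixed basis vector the product over $p$ is eventually equal to $\phi^{(p)}$ so that the vector genuinely lies in the infinite tensor product with stabilization $e$, and that the family so obtained is indeed $\Lambda$-indexed in the sense of Theorem~\ref{A-00-0}/Corollary~\ref{A-00-1}. This is ensured by the constraint that $\widehat i'_p$ has only finitely many coordinates equal to $1$ (equivalently, $p(\widehat k')=p(j)=p(\widehat a')=p<\infty$), which matches exactly the definition of $\widetilde\Lambda$ in (\ref{A_(p,m)-phi-00}).
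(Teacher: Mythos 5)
Your proposal is correct and follows essentially the same route as the paper: the paper states this corollary without a separate proof, as the restriction to $L^2({\widetilde\bA})$ of the construction of Theorem~\ref{th1-ad-mra} and the decomposition (\ref{w-62.8=10**-ad-ss}), and your argument reproduces exactly that construction with the factor $L^2(\bR)$ suppressed (tensor-product description with stabilization $(\phi^{(p)})_p$, separable MRA via Theorem~\ref{th0-ad-mra}, and the distributive splitting of $\widetilde W_j$). Your closing remark on the stabilization bookkeeping is precisely the point the paper handles through the index sets $\widetilde\Lambda$ and $I_{\bA}$, so no gap remains.
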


\section{Lizorkin spaces on adeles}
\label{s7}

\subsection{Real and $p$-adic Lizorkin spaces.}\label{s7.1}
Recall some facts from~\cite{Liz1}--~\cite{Liz3},~\cite[2.]{Sam3},
~\cite[\S 25.1]{Sam-Kil-Mar}. Consider the following space
$$
\Psi(\bR)=\{\psi(\xi)\in \cS(\bR):
\psi^{(j)}(0)=0, \, j=0,1,2,\dots\},.
$$
where $\cS(\bR)$ is the real Schwartz space of tempered test functions.
The space of functions
$$
\Phi(\bR)=\{\phi: \phi=F[\psi], \, \psi\in \Psi(\bR)\}.
$$
is called the {\it real Lizorkin space of test functions}.
The Lizorkin space can be equipped with the topology of the space $\cS(\bR)$,
which makes $\Phi$ a complete space~\cite[2.2.]{Sam3},
~\cite[\S 25.1.]{Sam-Kil-Mar}.
Since the Fourier transform is a linear isomorphism ${\cS}(\bR)$ onto
${\cS}(\bR)$, this space admits the following characterization:
$\phi\in \Phi(\bR)$ if and only if $\phi\in \cS(\bR)$ is orthogonal to
polynomials, i.e.,
\begin{equation}
\label{8-11}
\int_{\bR}x^{n}\phi(x)\,dx=0, \quad n=0,1,2,\dots.
\end{equation}

The space $\Phi'(\bR)$ is called the {\it real Lizorkin space of distributions\/}.

According to~\cite{Al-Kh-Sh3},~\cite[Ch.~7]{Al-Kh-Sh=book} the {\it $p$-adic
Lizorkin space of test functions} is defined as
$\Phi(\bQ_p)=\{\phi: \phi=F[\psi], \, \psi\in \Psi(\bQ_p)\}$,
where $\Psi(\bQ_p)=\{\psi(\xi)\in \cD(\bQ_p): \psi(0)=0\}$.
The space $\Phi(\bQ_p)$ equipped with the topology of the space
$\cD(\bQ_p)$ is a complete space.

\begin{Lemma}
\label{lem1-pp}
{\rm (~\cite{Al-Kh-Sh3})} We have
{\rm (a)} $\phi\in \Phi(\bQ_p)$ iff $\phi\in \cD(\bQ_p)$ and
\begin{equation}
\label{14.1-ll}
\int_{\bQ_p}\phi(x)\,dx=0;
\end{equation}

{\rm (b)} $\phi \in {\cD}^l_N(\bQ_p)\cap\Phi(\bQ_p)$
\, iff \, $\psi=F^{-1}[\phi]\in {\cD}^{-N}_{-l}(\bQ_p)\cap\Psi(\bQ_p)$.
\end{Lemma}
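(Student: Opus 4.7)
The plan is to derive both parts from two basic facts already recorded in the preliminaries: the Fourier transform $F$ is a linear isomorphism of $\cD(\bQ_p)$ onto itself, and the support/parameter-of-constancy duality (\ref{8.2-char-ad}), which says $\varphi\in\cD^l_N(\bQ_p)$ iff $F[\varphi]\in\cD^{-N}_{-l}(\bQ_p)$.

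For part (a), I would unfold the definition $\Phi(\bQ_p)=F[\Psi(\bQ_p)]$. Given $\phi\in\cD(\bQ_p)$, the isomorphism property lets me set $\psi=F^{-1}[\phi]\in\cD(\bQ_p)$, so membership $\phi\in\Phi(\bQ_p)$ is equivalent to the single vanishing condition $\psi(0)=0$. Evaluating the Fourier inversion formula at $x=0$ and using $\chi_p(0)=1$, I get $\psi(0)=F^{-1}[\phi](0)=\int_{\bQ_p}\phi(\xi)\,d\xi$, which turns $\psi(0)=0$ into (\ref{14.1-ll}). This proves (a).

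For part (b), I would combine (a) with the duality (\ref{8.2-char-ad}). Applied to $\psi=F^{-1}[\phi]$, this relation gives $\phi\in\cD^l_N(\bQ_p)$ iff $\psi\in\cD^{-N}_{-l}(\bQ_p)$. Intersecting both sides with the respective Lizorkin-type spaces and using (a) to identify the condition $\phi\in\Phi(\bQ_p)$ with $\int\phi\,dx=0$ (equivalently, $\psi(0)=0$, i.e.\ $\psi\in\Psi(\bQ_p)$), the two-way implication $\phi\in\cD^l_N(\bQ_p)\cap\Phi(\bQ_p)\Longleftrightarrow\psi\in\cD^{-N}_{-l}(\bQ_p)\cap\Psi(\bQ_p)$ follows immediately.

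Essentially no obstacle is present here: the proof is a direct unfolding of the definition of $\Phi(\bQ_p)$ via $F$, together with the two prepackaged facts about $F$ on $\cD(\bQ_p)$. The only point requiring mild care is the sign convention for $F^{-1}$, to be sure that the identity $F^{-1}[\phi](0)=\int_{\bQ_p}\phi(\xi)\,d\xi$ comes out correctly; this is immediate from the definition $F[\varphi](\xi)=\int_{\bQ_p}\chi_p(\xi x)\varphi(x)\,dx$ and the fact $\chi_p(0)=1$, with the inverse taken as $F^{-1}[\phi](x)=\int_{\bQ_p}\chi_p(-\xi x)\phi(\xi)\,d\xi$.
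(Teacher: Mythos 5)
Your proof is correct: unfolding $\Phi(\bQ_p)=F[\Psi(\bQ_p)]$, using that $F$ is an automorphism of $\cD(\bQ_p)$, identifying $\psi(0)=F^{-1}[\phi](0)=\int_{\bQ_p}\phi\,dx$, and invoking the duality (\ref{8.2-char-ad}) for part (b) is exactly the standard argument. The paper itself does not prove this lemma but recalls it from the cited reference, and your reconstruction matches that argument in substance.
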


The topological dual of the space $\Phi(\bQ_p)$ is the space of $p$-adic
{\em Lizorkin distributions} $\Phi'(\bQ_p)$. The space $\Phi'(\bQ_p)$ can
be obtained from $\cD'(\bQ_p)$ by ``sifting out'' constants (see~\cite{Al-Kh-Sh3}).

\subsection{Adelic Lizorkin spaces.}\label{s7.2}
Consider the subspace $\Psi(\bA)$ of the space of test functions $\cS(\bA)$
(see Subsec.~\ref{s2.4}) consisting of finite linear combinations of elementary functions (\ref{ad-1})
$\eta(\xi)=\prod_{p\in V_\bQ}\eta_{p}(\xi_{p})$ (where $V_\bQ$ is defined by (\ref{Q-valuations}))
with properties $(i)-(iii)$ of Definition~\ref{de-B-Sch}, where $\eta_{p}(\xi_{p})=\Omega(|\xi_p|_p)$
for all $p>P$) and such that
$$
\quad (iv)\,\,  \frac{d^s}{d\xi_{\infty}^s}\eta(\xi_{\infty},\xi_{2},\xi_{3},\dots,\xi_{r_{-}},\xi_{r},
\xi_{r_{+}},\dots,\xi_{P},\xi_{P_{+}},\dots)\Big|_{\xi_{r}=0}=0,
\quad s=0,1,2,\dots,
\qquad\qquad
$$
for all $r\in \{\infty,2,3,\dots,P\}$, where $P=P(\eta)$ is the {\em parameter of finiteness}
of an elementary function $\eta$.

Let $\Psi({\widetilde\bA})$ be the space of test functions  $\cS({\widetilde\bA})$ connected
with the {\em non-Archimedean part} of adeles ${\widetilde\bA}$ (see Subsec.~\ref{s2.4})
consisting of finite linear combinations of elementary functions
$\eta(\xi')=\prod_{p\in V_\bQ\setminus\infty}\eta_{p}(\xi_{p})$ with properties
$(i)-(iii)$ of Definition~\ref{de-B-Sch} (here $\eta_{p}(\xi_{p})=\Omega(|\xi_p|_p)$
for all $p>P$) and such that
$$
\quad (iv*)\,\,  \eta(\xi_{2},\xi_{3},\dots,\xi_{r_{-}},\xi_{r},
\xi_{r_{+}},\dots,\xi_{P},\xi_{P_{+}},\dots)\big|_{\xi_{r}=0}=0,
\qquad r\in \{2,3,\dots,P\},
$$
where $P=P(\eta)$ is the {\em parameter of finiteness} of an elementary function $\eta$.

\begin{Definition}
\label{de-L-1-ad} \rm
The spaces
$$
\Phi(\bA)=\bigl\{\zeta: \zeta=F[\eta], \, \eta\in \Psi(\bA)\bigr\}
\quad \text{and}\quad
\Phi({\widetilde\bA})=\bigl\{\zeta: \zeta=F[\eta], \, \eta\in \Psi({\widetilde\bA})\bigr\}
$$
are called the {\it adelic Lizorkin spaces of test functions}.
\end{Definition}

$\Phi(\bA)$ can be equipped with the topology of the space $\cS(\bA)$ (see (\ref{ad-1-test-1})). 
%%%%%%%% which makes $\Phi(\bA)$ a complete space.
Since the Fourier transform is a linear isomorphism ${\cS}(\bA)$ onto
${\cS}(\bA)$ (see~\cite[Ch.~III,\S2.2]{G-Gr-P}), we have $\Phi(\bA)\subset\cS(\bA)$.

The Lizorkin spaces $\Phi(\bA)$ and $\Phi({\widetilde\bA})$ admit the following characterizations.

\begin{Lemma}
\label{lem1-10-ad}
$(1)$ Any elementary function
$$
\zeta(x)=\zeta_{\infty}(x_{\infty})\prod_{p\in V_\bQ\setminus\infty}\zeta_{p}(x_{p})\in \Phi(\bA)
$$
{\rm(}here $\zeta_{p}(x_{p})=\Omega(|x|_p)$ for all $p> P${\rm)} if and only if $\zeta\in \cS(\bA)$ and
\begin{equation}
\label{50-ad}
\int_{\bQ_{r}}x_{r}^{s}\zeta(x_{\infty},x_{2},x_{3},\dots,x_{r_{-}},x_{r},
x_{r_{+}},\dots,x_{P},x_{P_{+}},\dots)\,dx_r=0,
\end{equation}
where if $r=\infty$, then $s=0,1,2,\dots$, and if $r=2,3,\dots,P$, then $s=0$.

$(2)$ Any elementary function
$$
\widetilde{\zeta}(x')=\prod_{p\in V_\bQ\setminus\infty}\zeta_{p}(x_{p})\in \Phi({\widetilde\bA})
$$
if and only if $\widetilde{\zeta}\in \cS({\widetilde\bA})$ and
\begin{equation}
\label{50-ad-1}
\int_{\bQ_{r}}\widetilde{\zeta}(x_{2},x_{3},\dots,x_{r_{-}},x_{r},
x_{r_{+}},\dots,x_{P},x_{P_{+}},\dots)\,dx_r=0, \quad r=2,3,\dots,P.
\end{equation}
\end{Lemma}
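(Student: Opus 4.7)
The plan is to reduce both equivalences to the one-dimensional analogues in the real and $p$-adic Lizorkin spaces, exploiting that elementary adelic functions behave compatibly with all the relevant operations. By (\ref{ad-21}) the adelic additive character factorizes as $\chi_0(\xi x)=\chi_\infty(\xi_\infty x_\infty)\prod_{p\ne\infty}\chi_p(\xi_p x_p)$, and by (\ref{ad-meas}) the Haar measure $dx$ factorizes as well. Hence for an elementary function $\zeta=\zeta_\infty\otimes\bigotimes_{p\ne\infty}\zeta_p$ with parameter of finiteness $P$, the inverse Fourier transform factorizes into the product $\eta:=F^{-1}[\zeta]=F^{-1}[\zeta_\infty]\otimes\bigotimes_{p\ne\infty}F^{-1}[\zeta_p]$. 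Since $F$ is an isomorphism of $\cS(\bA)$ and (\ref{14.2}) yields $F^{-1}[\Omega(|\cdot|_p)]=\Omega(|\cdot|_p)$, the parameter of finiteness of $\eta$ coincides with that of $\zeta$, and $\zeta\in\Phi(\bA)$ becomes equivalent to $F^{-1}[\zeta_\infty]\in\Psi(\bR)$ together with $F^{-1}[\zeta_p]\in\Psi(\bQ_p)$ for each $2\le p\le P$.

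Next I would translate each of these factorwise conditions into a moment condition on $\zeta_r$. For the Archimedean factor, differentiating under the integral shows that $F^{-1}[\zeta_\infty]^{(s)}(0)$ is a nonzero constant multiple of $\int_\bR x_\infty^s\,\zeta_\infty(x_\infty)\,dx_\infty$, so $F^{-1}[\zeta_\infty]\in\Psi(\bR)$ is equivalent to the vanishing of these moments for all $s\ge 0$, which is exactly (\ref{8-11}). For each non-Archimedean factor with $p\le P$, Lemma~\ref{lem1-pp}(a) identifies the condition $F^{-1}[\zeta_p](0)=0$ with $\int_{\bQ_p}\zeta_p(x_p)\,dx_p=0$, that is, with (\ref{14.1-ll}).

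Finally I would tie these factorwise moment conditions to the integrated condition (\ref{50-ad}). Because $\zeta$ is an elementary tensor,
\[
\int_{\bQ_r}x_r^s\,\zeta(x)\,dx_r=\Big(\int_{\bQ_r}x_r^s\,\zeta_r(x_r)\,dx_r\Big)\prod_{q\ne r}\zeta_q(x_q),
\]
and the remaining product is not identically zero (each $\zeta_q$ is a nonzero element of $\cS(\bQ_q)$, with $\zeta_q=\Omega(|\cdot|_q)$ for $q>P$); hence the integrated expression vanishes as a function of the remaining variables if and only if the scalar $\int_{\bQ_r}x_r^s\,\zeta_r(x_r)\,dx_r$ does. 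Running this observation over $r=\infty$ for every $s\ge 0$ and over each $r\in\{2,\dots,P\}$ with $s=0$ reproduces (\ref{50-ad}) precisely, establishing part $(1)$. Part $(2)$ is obtained by the same argument after dropping the Archimedean tensor factor and invoking Lemma~\ref{lem1-pp}(a) alone.

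The only genuinely delicate step is the factorization of $F^{-1}$ on elementary adelic test functions together with the matching of finiteness parameters through (\ref{14.2}); once this is in place the argument is a direct transcription of the known one-dimensional Lizorkin characterizations, and I do not anticipate a substantial obstruction beyond careful bookkeeping on the tensor factors.
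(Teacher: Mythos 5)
Your argument is correct and is essentially the proof the paper leaves implicit: Lemma~\ref{lem1-10-ad} is stated without proof, and the intended justification is exactly your reduction — factorize $F^{-1}$ on elementary tensors using (\ref{ad-21}), (\ref{ad-meas}) and $F[\Omega(|\cdot|_p)]=\Omega(|\cdot|_p)$ from (\ref{14.2}), then invoke the one-dimensional characterizations (\ref{8-11}) for the Archimedean factor and Lemma~\ref{lem1-pp}(a) for each $p\le P$, and finally note that the partial integrals in (\ref{50-ad}) factor so that their vanishing is equivalent to the vanishing of the corresponding one-dimensional moments. The only point you gloss over (as does the paper) is that $\Psi(\bA)$ is defined as a linear span of elementary functions, so in the ``only if'' direction one should observe that an elementary $\eta=F^{-1}[\zeta]$ lying in that span still satisfies the factorwise conditions of Definition~\ref{de-B-Sch}(iv) — the Archimedean conditions are shared by all terms, and for a finite $r\le P$ the terms not vanishing on $\{\xi_r=0\}$ all carry the factor $\Omega(|\xi_P|_P)$, which is incompatible with the $P$-th factor of a nonzero elementary $\eta$ with minimal parameter of finiteness $P$.
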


\begin{Remark}\rm
\label{Top-Liz}
In view of condition $(iii)$, similarly to (\ref{ad-1-test-1}), the spaces $\Phi(\bA)$
and $\Phi({\widetilde\bA})$ admit a representation in the form
\begin{equation}
\label{ad-1-test-1-L}
\Phi(\bA)=\lim\limits_{m\in V_{\bQ}\setminus\infty}{\rm ind}\,\Phi^{[m]}(\bA), \qquad
\Phi({\widetilde\bA})=
\lim\limits_{m\in V_{\bQ}\setminus \infty}{\rm ind}\,\Phi^{[m]}({\widetilde\bA}),
\end{equation}
where $\Phi^{[m]}(\bA)\subset \Phi(\bA)$ and $\Phi^{[m]}({\widetilde\bA})\subset \Phi({\widetilde\bA})$
are subspaces of the corresponding test functions with the {\em parameter of finiteness} $m$, $m\in V_{\bQ}$.
The representation (\ref{ad-1-test-1-L}) equip the spaces $\Phi(\bA)$ and $\Phi({\widetilde\bA})$
with the {\em inductive limit topology}.
\end{Remark}

The spaces $\Phi'(\bA)$ and $\Phi'({\widetilde\bA})$ are called the {\it adelic Lizorkin spaces of distributions}.

We define the Fourier transform of distributions
$f\in \Phi'(\bA)$ and $g\in \Psi'(\bA)$ by the relations
\begin{equation}
\label{51-ad}
\begin{array}{rcl}
\displaystyle
\langle F[f],\eta\rangle=\langle f,F[\eta]\rangle,
&& \forall \, \eta\in \Psi(\bA), \medskip \\
\displaystyle
\langle F[g],\zeta\rangle=\langle g,F[\zeta]\rangle,
&& \forall \, \zeta\in \Phi(\bA). \\
\end{array}
\end{equation}
By definition, $F[\Phi(\bA)]=\Psi(\bA)$, \
$F[\Psi(\bA)]=\Phi(\bA)$, i.e., (\ref{51-ad})
give well defined objects. Moreover, we have
$F[\Phi'(\bA)]=\Psi'(\bA)$, $F[\Psi'(\bA)]=\Phi'(\bA)$.

\subsection{Adelic Lizorkin spaces and wavelets.}\label{s7.3}
It is well known that $p$-adic Haar wavelet functions (\ref{p-101**-basis})
from $L^2(\bQ_p)$ satisfy the condition (see~\cite[Ch.~8]{Al-Kh-Sh=book})
\begin{equation}
\label{51-1-ad-2}
\int_{\bQ_p}\psi_{k; j a}(x)\,dx=0, \quad k\in \bF_{p}^{\times}, \, j\in \bZ, \, a\in I_p,
\end{equation}
where $\psi_{k;j a}(x)=p^{j/2}\psi_{k}(p^{-j}x-a)$ and wavelet functions $\psi_{k}$,
$k\in \bF_{p}^{\times}$, are given by Theorem~\ref{p-th4}.

In view of the formula (see~\cite[Ch.~8]{Al-Kh-Sh=book} or~\cite{Koz0})
$$
\big(\Omega(|x|_p),\psi_{k; j a}(x)\big)=\left\{
\begin{array}{lll}
p^{j/2}, && a=0, \, j \le -1, \\
0, && \text{otherwise}, \\
\end{array}
\right.
$$
$p$-adic Haar wavelet functions (\ref{62.0-1=zz}) from $L^2(\bZ_p)$ satisfy the following condition:
\begin{equation}
\label{51-1-ad-3}
\int_{\bZ_p}{\widetilde\psi}^{(0)}_{k;j a}(x)\,dx=\big(\Omega(|x|_p),\psi_{k; j a}(x)\big)=0,
\quad k\in \bF_{p}^{\times}, \, j\in \bZ_{+}, \, a\in I_p^j,
\end{equation}
where $I_p^j$ is defined in Proposition~\ref{lem-zz-1}.
It is easy to see that for the $p$-adic Haar wavelet functions (\ref{62.0-1=zz-11})
from $L^2(\bZ_p)$ we also have
\begin{equation}
\label{51-1-ad-33}
\int_{\bZ_p}{\widetilde\psi}_{k;j a}(x)\,dx=0,
\quad k\in \bF_{p}^{\times}, \, j\in \bZ_{+}, \, a\in I_p^{(j)},
\end{equation}
where $I_p^{(j)}$ is described in Remark~\ref{r.restrict}.

Thus, taking into account relations (\ref{51-1-ad-2})--(\ref{51-1-ad-33}),
$\int_{\bQ_p}\phi(x)\,dx=1$, and Lemma~\ref{lem1-10-ad}, we conclude that the
adelic wavelet functions (\ref{1-ad+=11}) and (\ref{w-62.8=10**-ad-ss-1})
belong to the Lizorkin space $\Phi({\widetilde\bA})$ {\em only} if products
(\ref{1-ad+=11-00}) and (\ref{w-62.8=10**-ad-ss-1-A-0}) {\em do not contain}
the functions $\phi^{(p)}(x_p-a_p)$ as factors for $p\le m$.

\section{Characterization of the adelic Lizorkin spaces in terms of wavelets}
\label{s8}

To construct adelic wavelet bases in $L^2(\bA)$, we used the real Haar basis (\ref{m=1-h})
in $L^2(\bR)$, for which we have $\int_{\bR}\psi_{j n}^{H}(t)\,dt=0$.
Since $\int_{\bR}t^n\psi_{j n}^{H}(t)\,dt=0$ does not hold for $n\in \bN$,
it is clear that $\psi_{j n}^{H}\notin \Phi(\bR)$. Therefore, in contrast to the
$p$-adic case (see~\cite[8.14]{Al-Kh-Sh=book}), the characterization of the adelic
Lizorkin spaces in terms of wavelets is possible only for
the space $\Phi({\widetilde\bA})$.

For simplicity here and in what follows we will suppose that the adelic wavelets
{\rm(\ref{1-ad+=11-00})} are constructed by using the one-dimensional wavelet
basis (\ref{62.0-1=111-K}) and its restriction (\ref{62.0-1=zz}) to $\bZ_p$.

\begin{Lemma}
\label{lem-w-1**-ad-00}
Any test function $\widetilde{\zeta}\in \Phi({\widetilde\bA})$ can be represented in
the form of a {\em finite} sum
\begin{equation}
\label{wav-9.4=1-ad-00}
\widetilde{\zeta}(x')=\sum_{\alpha\in \widetilde{\Lambda}}c_{\alpha}\widetilde{\Psi}_{\alpha}(x'),
\quad x'=(x_{2},x_{3},\dots) \in {\widetilde\bA},
\end{equation}
where $\widetilde{\Lambda}$ is the set of indices {\rm(\ref{A_(p,m)-phi-00})};
$c_{\alpha}$ are constants; $\widetilde{\Psi}_{\alpha}(x')$ are wavelet
functions {\rm(\ref{1-ad+=11-00})}
$$
\widetilde{\Psi}_{\alpha}=\widetilde{\Psi}_{(\widehat{k}; \widehat{j}\ \widehat{a})}
=\bigotimes\limits_{2\le q\leq m}\psi_{\alpha_{q}}^{(q)}
\otimes\bigotimes\limits_{m<q}\phi^{(q)}, \quad \alpha\in \widetilde{\Lambda},
$$
which do not contain the factors $\psi_{\alpha_{q}}^{(q)}(x_{q})=\phi^{(q)}(x_q-a_q)$,
$a_p \in I_p$, {\rm(}see {\rm(\ref{basis-phi-inQ}), (\ref{basis-phi-inZ}))} for $q\le m$.
\end{Lemma}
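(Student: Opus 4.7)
The plan is to reduce the adelic claim to the one-dimensional $p$-adic Lizorkin wavelet characterization via the tensor-product structure of the basis~(\ref{1-ad+=11-00}). First, by the inductive limit representation~(\ref{ad-1-test-1-L}) and linearity, it is enough to treat a single elementary test function $\widetilde{\zeta}(x')=\prod_{p\in V_\bQ\setminus\infty}\zeta_p(x_p)$ with parameter of finiteness $P$, so that $\zeta_p=\phi^{(p)}$ for $p>P$ and $\zeta_p\in\cD(\bQ_p)$ for $p\le P$.

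Next I apply Lemma~\ref{lem1-10-ad}(2): the Lizorkin condition forces $\int_{\bQ_r}\widetilde{\zeta}\,dx_r=0$ for each $r\le P$, and for the elementary product this factors as
$$
\Bigl(\prod_{q\ne r}\zeta_q(x_q)\Bigr)\int_{\bQ_r}\zeta_r(x_r)\,dx_r=0,
$$
which, since the prefactor is not identically zero, forces $\int_{\bQ_r}\zeta_r\,dx_r=0$; hence $\zeta_r\in\Phi(\bQ_r)$ by Lemma~\ref{lem1-pp}(a). Since $\zeta_r\in\cD^{l_r}_{N_r}(\bQ_r)$ for suitable integers $N_r,l_r$, it lies in the finite-dimensional subspace $\cD^{l_r}_{N_r}(\bQ_r)\cap\Phi(\bQ_r)$.

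I then invoke the $p$-adic wavelet characterization of the Lizorkin space (the analogue of the corresponding result in~\cite[Ch.~8]{Al-Kh-Sh=book}): the Kozyrev-type wavelets $\psi^{(r)}_{k;j,a}$, with $k\in\bF_r^{\times}$ and $(j,a)$ ranging over the finite admissible set determined by $N_r,l_r$, span this finite-dimensional space, so
$$
\zeta_r(x_r)=\sum_{(k,j,a)\in F_r}c^{(r)}_{k,j,a}\,\psi^{(r)}_{k;j,a}(x_r)
$$
is a finite wavelet expansion with no scaling-function contribution. Substituting these one-dimensional expansions into $\widetilde{\zeta}=\bigotimes_{p\le P}\zeta_p\otimes\bigotimes_{p>P}\phi^{(p)}$ and expanding multilinearly yields a finite linear combination of basis vectors~(\ref{1-ad+=11-00}) whose stabilization indices $m$ are at most $P$ and whose factors $\psi^{(q)}_{\alpha_q}$ with $q\le m$ are all Kozyrev wavelets, which is the form asserted by the lemma.

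The main obstacle lies in the one-dimensional step: a generic $\zeta_r\in\Phi(\bQ_r)$ may have nonzero projection onto $V_0^{(r)}$, and the MRA basis~(\ref{62.0-1=111-K}) (with scales $j\ge 0$) cannot represent this projection without involving translated scaling functions $\phi^{(r)}(\cdot-a)$. The resolution is to use the full Kozyrev basis~(\ref{62.0-1=}) with scales $j\in\bZ$, whose intersection with $\cD^{l_r}_{N_r}(\bQ_r)$ is again finite-dimensional and whose negative-scale wavelets absorb the $V_0^{(r)}$-contribution; this is exactly why the lemma can be phrased with wavelet factors only and why the tensor-product argument produces an expansion free of $\phi^{(q)}(\cdot-a_q)$ factors for $q\le m$.
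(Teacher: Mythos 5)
Your opening reductions agree with the paper: pass to an elementary function, observe via Lemma~\ref{lem1-10-ad}(2) that each factor $\zeta_r$, $r\le P$, has zero mean and hence lies in $\Phi(\bQ_r)\cap\cD^{l_r}_{N_r}(\bQ_r)$, and the one-dimensional ``finite wavelet expansion'' fact you cite is indeed available in~\cite{Al-Kh-Sh=book} — but only for the full basis (\ref{p-101**-basis}), (\ref{62.0-1=}) with scales $j\in\bZ$. That is exactly where your argument breaks. The system $\{\widetilde{\Psi}_\alpha\}$ of (\ref{1-ad+=11-00}) is assembled from the \emph{modified} bases (\ref{62.0-1=111-K}) and (\ref{62.0-1=zz}): by (\ref{basis-phi-inQ}), (\ref{basis-phi-inZ}) the admissible $p$-adic factors are either wavelets with $j_q\in\bZ_{+}$ or translates $\phi^{(q)}(\cdot-a_q)$, and the latter are precisely what the lemma forbids. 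Negative-scale wavelets are not factors of this system at all; since $W^{(q)}_{j}\subset V^{(q)}_{0}$ for $j\le -1$, each such wavelet is itself a finite linear combination of the translates $\phi^{(q)}(\cdot-a_q)$, $a_q\in I_q$. Hence, after substituting your one-dimensional expansions with $j\in\bZ$ and expanding multilinearly, you do not obtain a sum of functions $\widetilde{\Psi}_\alpha$ of the required form; rewriting the negative-scale pieces in the allowed basis reinstates exactly the excluded factors. The obstacle you correctly flag in your last paragraph is therefore not resolved by switching to the Kozyrev basis — it is relocated to a basis the lemma does not use, and what your argument establishes is a finite expansion in a different wavelet system, not (\ref{wav-9.4=1-ad-00}).

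The paper's proof takes a different route and never assembles the adelic expansion from one-dimensional expansions. Since $\widetilde{\zeta}\in L^2({\widetilde\bA})$, the expansion over the orthonormal system (\ref{1-ad+=11-00}) is automatic, and the entire work consists in showing that only finitely many coefficients $c_\alpha=\prod_q\big(\zeta_q,\psi^{(q)}_{\alpha_q}\big)$ are nonzero, by comparing the stabilization index $m$ of $\widetilde{\Psi}_\alpha$ with the parameter of finiteness $P$ of $\widetilde{\zeta}$: for $m>P$ the factors $\big(\phi^{(q)},\psi^{(q)}_{\alpha_q}\big)$ with $P<q\le m$ vanish by the mean-zero relations (\ref{51-1-ad-3}), (\ref{51-1-ad-33}); for $m<P$ the factors $\big(\zeta_q,\phi^{(q)}\big)$ vanish by a disjoint-Fourier-support argument based on (\ref{8.2-char-ad}) and Lemma~\ref{lem1-pp}; and for $m=P$ the explicit transforms (\ref{o-64.8*-ad}), (\ref{o-64.8*-ad-11*}) first confine the $j_q$ to a finite range determined by $N_q,l_q$ and then confine the $a_q$ to a finite set. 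To repair your tensor-by-tensor route you would have to carry out the one-dimensional step inside the modified system (\ref{62.0-1=111-K}) itself, i.e.\ control the $V^{(q)}_0$-components (the pairings with $\phi^{(q)}(\cdot-a_q)$) directly — which is precisely the delicate point your present write-up bypasses; citing the $j\in\bZ$ characterization cannot substitute for that analysis.
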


\begin{proof}
By definition of the space $\Phi({\widetilde\bA})$ (see Subsec.~\ref{s7.2}),
it is sufficient to prove this lemma for the case of an elementary function
$\widetilde{\zeta}(x')=\prod_{p\in V_\bQ\setminus\infty}\zeta_{p}(x_{p})$, where
$\zeta_{p}(x_{p})=\phi^{(p)}(x_p)=\Omega(|x_p|_p)$ for all $p>P$, and $P=P(\widetilde{\zeta})$
is the {\em parameter of finiteness} of an elementary function $\widetilde{\zeta}$.
It is clear that $\widetilde{\zeta}\in L^2({\widetilde\bA})$. Then (\ref{wav-9.4=1-ad-00})
holds. We will prove that only the finite number of $c_{\alpha}\ne 0$.

According to Subsec.~\ref{s7.3},
$\widetilde{\Psi}_{\alpha}(x')\in \Phi({\widetilde\bA})$ only if the product
(\ref{1-ad+=11-00}) does not contain functions $\phi^{(p)}(x_p-a_p)$ as factors for $p\le m$.
Now we need to calculate the coefficients
\begin{equation}
\label{Liz-ad-1}
c_{\alpha}=c_{(\widehat{k}; \widehat{j}\ \widehat{a})}
=\big(\widetilde{\zeta}(x'), \widetilde{\Psi}_{\alpha}(x')\big)
=\prod_{p\in V_\bQ\setminus\infty}\big(\zeta_{q}(x_{q}), \, \psi_{\alpha_{q}}^{(q)}(x_q)\big),
\end{equation}
where one-dimensional wavelet functions $\psi_{\alpha_{q}}^{(q)}=\psi^{(q)}_{k_q; j_q a_q}(x_{q})$,
$\alpha_{q}=(k_q,j_q,a_q)\in {\bI}_p^+=({\bF}_q^\times,\bZ_{+},I_q)$,
are given by (\ref{62.0-1=111-K}), (\ref{62.0-1=zz}), (\ref{basis-phi-inQ}), (\ref{basis-phi-inZ}),
and among them there are no wavelet functions $\phi^{(q)}(x_q-a_q)$.

1. Suppose that $m > P$. Then we have
$$
c_{\alpha}=c_{(\widehat{k}; \widehat{j}\ \widehat{a})}=\big(\widetilde{\zeta}(x'), \widetilde{\Psi}_{\alpha}(x')\big)
=\prod_{2\le q\le P}\big(\zeta_{q}(x_{q}), \, \psi_{\alpha_{q}}^{(q)}(x_q)\big)
\times
$$
$$
\qquad\qquad
\times
\prod_{P< q\le m}\big(\phi^{(q)}(x_q), \, \psi_{\alpha_{q}}^{(q)}(x_q)\big)
\prod_{m< q}\big(\phi^{(q)}(x_q), \, \phi^{(q)}(x_q)\big).
$$
According to (\ref{51-1-ad-2}) and (\ref{51-1-ad-3}), (\ref{51-1-ad-33}), we have
$$
\int_{\bZ_q}\psi_{\alpha_{q}}^{(q)}(x_q)\,dx_q=0, \quad \text{for} \quad P< q\le m,
$$
i.e., $c_{\alpha}=0$.

2. Let $m < P$. Consider the part of product (\ref{Liz-ad-1})
$$
\prod_{m< q\le P}\big(\zeta_{q}(x_{q}), \, \phi^{(q)}(x_q)\big)
=\prod_{m< q\le P}\big(\zeta_{q}^{0}(x_{q}), \, \phi^{(q)}(x_q)\big),
$$
where $\zeta_{q}^{0}(x_{q}):=\zeta_{q}(x_{q})\big|_{\bZ_q}\in \Phi(\bZ_q)$ and
$\Phi(\bZ_q)$ is the Lizorkin space of test functions with support in
$\bZ_q$ (see Sec.~\ref{s7.1}).
Using the Parseval-Steklov theorem, we obtain
$$
\prod_{m< q\le P}\big(\zeta_{q}(x_{q}), \, \phi^{(q)}(x_q)\big)
=\prod_{m< q\le P}\big(F[\zeta_{q}^{0}](\xi_{q}), \, F[\phi^{(q)}(x_{q})](\xi_q)\big).
$$
It is clear that $\zeta_{q}^{0}(x_{q})\in \Phi(\bZ_q)$
belongs to one of the spaces ${\cD}^{l_q}_{N_q}(\bZ_q)$, $l_q \le N_q \le 0$, and satisfies
condition (\ref{14.1-ll}). Then in view of (\ref{8.2-char-ad}) and Lemma~\ref{lem1-pp},
$F[\zeta_{q}^{0}]\in \Psi(\bQ_q)\cap \cD^{-N_{q}}_{-l_{q}}(\bZ_q)$, and
$\supp\,F[\zeta_{q}^{0}]\subset B_{-l_{q}}\setminus B_{-N_{q}}$, $0\le -N_q \le -l_q$.
At the same time, $F[\phi^{(q)}](\xi_q)=\phi^{(q)}(\xi_{q})$.
Thus $\supp\,F[\zeta_{q}^{0}] \cap \supp\,\phi^{(q)}=\emptyset$ and, consequently,
$\big(F[\zeta_{q}^{0}](\xi_{q}), \, F[\phi^{(q)}(x_{q})](\xi_q)\big)=0$,
$m< q\le P$. That is in this case $c_{\alpha}=0$.

3. Let $m=P$. In this case, using the Parseval-Steklov theorem, one can
rewrite the product (\ref{Liz-ad-1}) in the form
$$
c_{\alpha}=c_{(\widehat{k}; \widehat{j}\ \widehat{a})}
=\big(\widetilde{\zeta}(x'), \widetilde{\Psi}_{\alpha}(x')\big)
\qquad\qquad\quad\qquad\qquad\qquad\qquad\qquad\quad
$$
$$
=\prod_{2\le q\le m}\big(\zeta_{q}(x_{q}), \, \psi_{\alpha_{q}}^{(q)}(x_q)\big)
=\prod_{2\le q\leq m}\big(F[\zeta_{q}](\xi_{q}), \, F[\psi_{\alpha_{q}}^{(q)}](\xi_q)\big).
$$

Let $\zeta_{q}\in \Phi(\bQ_q)$ for $2\le q\leq p$. According to Definition~\ref{de-L-1-ad},
Lemma~\ref{lem1-pp}, and relation (\ref{8.2-char-ad}), any function $\zeta_{q}(x_{q})\in \Phi(\bQ_q)$
belongs to one of the spaces ${\cD}^{l_q}_{N_q}(\bQ_q)$ and satisfies condition
(\ref{14.1-ll}), i.e., $F[\zeta_{q}]\in \Psi(\bQ_q)\cap \cD^{-N_{q}}_{-l_{q}}(\bQ_q)$,
and $\supp\,F[\zeta_{q}]\subset B_{-l_{q}}\setminus B_{-N_{q}}$, $2\le q\leq p$.

For the wavelet function (\ref{basis-phi-inQ}),
$$
\psi_{\alpha_{q}}^{(q)}(x_{q})=\psi^{(q)}_{k_q;\, j_q a_q}(x_{q})
\qquad\qquad\qquad\qquad\qquad\qquad\qquad\qquad\qquad\qquad\qquad\quad
$$
\begin{equation}
\label{o-64.8*-ad-ww}
=q^{j_q/2}\chi_q\Big(\frac{k_q}{q}(q^{-j_q}x_q-a_q)\Big)\Omega\big(|q^{-j_q}x_q-a_q|_q\big),
\quad x_q\in \bQ_q,
\end{equation}
we have
\begin{equation}
\label{o-64.8*-ad}
F[\psi^{(q)}_{k_q;\, j_q a_q}](\xi_q)=q^{-j_q/2}\chi_q\big(q^{j_q}a_q\xi_q\big)
\Omega\Big(\Big|\frac{k_q}{q}+q^{j_q}\xi_q\Big|_q\Big), \quad j_q\in \bZ_{+}.
\end{equation}
Taking into account that $\supp\,F[\zeta_{q}]\subset B_{-l_{q}}\setminus B_{-N_{q}}$
and using formula (\ref{o-64.8*-ad}), one can conclude that
\begin{equation}
\label{o-64.8*-ad-11}
\big(F[\zeta_{q}](\xi_{q}), \, F[\psi_{\alpha_{q}}^{(q)}](\xi_q)\big)\ne 0,
\end{equation}
only if $q^{-N_{q}}\le |\xi_{q}|_q\le q^{-l_{q}}$ and $\frac{k_{q}}{q}+q^{j_{q}}\xi_q=\eta_q\in \bZ_q$
for any $2\le q\leq p$. Since $\xi_q=q^{-j_{q}}\big(\eta_q-\frac{k_{q}}{q}\big)$ and
$|\xi_{q}|_q=q^{j_{q}}\big|\eta_q-\frac{k_{q}}{q}\big|_q=q^{j_{q}+1}$, one can see
that the product (\ref{o-64.8*-ad-11}) is nonzero only for {\em finite number
of indices} $j_{q}$ such that
$$
q^{-N_{q}}< |\xi_{q}|_q=q^{j_{q}+1}\le q^{-l_{q}}, \quad 2\le q\leq p.
$$

Now we consider the scalar product
$\big(F[\zeta_{q}](\xi_{q}), F[\psi_{\alpha_{q}}^{(q)}](\xi_q)\big)$,
where $\zeta_{q}\in \Phi(\bZ_q)$ and
$$
\psi_{\alpha_{q}}^{(q)}(x_q)=\psi^{(q)}_{k_q;\, j_q a_q}(x_{q})
=q^{j_q/2}\chi_q\Big(\frac{k_q}{q}(q^{-j_q}x_q-a_q)\Big)\Omega\big(|q^{-j_q}x_q-a_q|_q\big),
\quad x_q\in \bZ_q,
$$
is a wavelet function given by (\ref{basis-phi-inZ}), $p< q\leq m$.
Using the identity~\cite[VII.1]{Vl-V-Z},~\cite{Koz0}
\begin{equation}
\label{identity-ad}
\Omega\big(|p^{j}x_p-a_p|_p\big)\Omega\big(|p^{j'}x_p-a'_p|_p\big)
=\Omega\big(|p^{j}x_p-a_p|_p\big)\Omega\big(|p^{j'-j}a_p-a'_p|_p\big),
\quad j \le j',
\end{equation}
by explicit calculation we obtain
\begin{equation}
\label{o-64.8*-ad-11*}
F[\psi_{\alpha_{q}}^{(q)}\big|_{\bZ_q}](\xi_q)=q^{-j_q/2}\Omega(|q^{j_q}a_q|_q)\chi_q\big(q^{j_q}a_q\xi_q\big)
\Omega\Big(\Big|\frac{k_q}{q}+q^{j_q}\xi_q\Big|_q\Big), \quad j_q\in \bZ_{+}.
\end{equation}
Next, using (\ref{o-64.8*-ad-11*}) and repeating the above calculation almost word for word, we find that
\begin{equation}
\label{o-64.8*-ad-12}
\big(F[\zeta_{q}](\xi_{q}), \, F[\psi_{\alpha_{q}}^{(q)}\big|_{\bZ_q}](\xi_q)\big) \ne 0,
\end{equation}
only for {\em finite number of indices} $j_{q}$, $p<q\leq m$.

Thus $c_{\alpha}=c_{(\widehat{k}; \widehat{j}\ \widehat{a})}\ne 0$ for
{\em finite number of indices} $j_{q}$, $2\le q\leq m$.
Moreover, for all wavelet functions $\widetilde{\Psi}_{\alpha}(x')$,
in the product (\ref{Liz-ad-1}) we have $m=P$.

Now we consider the product (\ref{Liz-ad-1}) again
$$
c_{\alpha}=c_{(\widehat{k}; \widehat{j}\ \widehat{a})}
=\big(\widetilde{\zeta}(x'), \widetilde{\Psi}_{\alpha}(x')\big)
=\prod_{2\le q\le m}\big(\zeta_{q}(x_{q}), \, \psi_{\alpha_{q}}^{(q)}(x_q)\big),
$$
where according to the above calculation, $m=P$.
Now let calculate the expression
$\big(\zeta_{q}(x_{q}), \, \psi_{\alpha_{q}}^{(q)}(x_q)\big)$.
Here the function $\zeta_{q}(x_{q})\in \Phi(\bQ_q)$ belongs to one of the spaces
${\cD}^{l_q}_{N_q}(\bQ_q)$ and $\psi_{\alpha_{q}}^{(q)}(x_q)$ is given by
(\ref{o-64.8*-ad-ww}).
Using identity (\ref{identity-ad}) we have
$$
\Omega\big(|q^{-j_q}x_q-a_q|_q\big)\Omega\big(|q^{N_q}x_q|_q\big)
=\left\{
\begin{array}{ll}
\Omega\big(|q^{-j_q}x_q-a_q|_q\big)\Omega\big(|q^{N+j_q}a_q|_q\big), &  -j_q \le N_q, \medskip \\
\Omega\big(|q^{N_q}x_q|_q\big)\Omega\big(|a_q|_q\big), &  -j_q > N_q. \\
\end{array}
\right.
$$
This relation implies that $\big(\zeta_{q}(x_{q}), \, \psi_{\alpha_{q}}^{(q)}(x_q)\big)\ne 0$
at least for $q^{N+j_q}a_q\in\bZ_q$ ($-j_q \le N_q$) or $a_q=0$ ($-j_q > N_q$). Since
the set of indices $j_q$ is finite, the set of the above indices $a_q$ is also finite.
Thus products (\ref{o-64.8*-ad-11}), (\ref{o-64.8*-ad-12}) are nonzero only for finite number
of $a_q\in I_q$.
\end{proof}

\begin{Corollary}
\label{lem-w-1**-ad}
Any test function $\zeta \in \Phi(\bA)$ can be represented in the form of a {\em finite} sum
\begin{equation}
\label{wav-9.4=1-ad}
\zeta(x)=\sum_{\alpha\in \widetilde{\Lambda}}c_{\alpha}(x_{\infty})\widetilde{\Psi}_{\alpha}(x'),
\quad x=(x_{\infty},x')\in \bA,
\end{equation}
where $\widetilde{\Lambda}$ is the set of indices {\rm(\ref{A_(p,m)-phi-00})};
$c_{\alpha}(x_{\infty})\in \Phi(\bR)$ are some Lizorkin test functions;
$\widetilde{\Psi}_{\alpha}(x')=\widetilde{\Psi}_{(\widehat{k}; \widehat{j}\ \widehat{a})}(x')$
are wavelet functions {\rm(\ref{1-ad+=11-00})} which do not contain
factors $\psi_{\alpha_{q}}^{(q)}(x_{q})=\phi^{(q)}(x_q-a_q)$,
$a_p \in I_p$, {\rm(}see {\rm(\ref{basis-phi-inQ}), (\ref{basis-phi-inZ}))} for $q\le m$.
\end{Corollary}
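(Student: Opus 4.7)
The plan is to reduce the statement to Lemma~\ref{lem-w-1**-ad-00} by separating the Archimedean factor from the non-Archimedean factors.

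First, by the very definition of $\Phi(\bA)=F[\Psi(\bA)]$ and the representation~(\ref{ad-1-test-1-L}), any $\zeta\in\Phi(\bA)$ is a \emph{finite} linear combination of elementary functions $\zeta_{\infty}(x_{\infty})\prod_{p\in V_{\bQ}\setminus\infty}\zeta_{p}(x_{p})$ with $\zeta_{p}=\phi^{(p)}$ for $p>P$. By linearity of~(\ref{wav-9.4=1-ad}) in $\zeta$, it suffices to establish the expansion for a single elementary $\zeta\in\Phi(\bA)$; the overall sum will remain finite because each summand contributes only finitely many wavelet indices.

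Next, I would show that an elementary $\zeta=\zeta_{\infty}\otimes\widetilde{\zeta}$ belonging to $\Phi(\bA)$ automatically splits as a tensor product of two Lizorkin objects, namely $\zeta_{\infty}\in\Phi(\bR)$ and $\widetilde{\zeta}\in\Phi(\widetilde{\bA})$. This is obtained directly from the characterization in Lemma~\ref{lem1-10-ad}: applying condition~(\ref{50-ad}) with $r=\infty$ (and the fact that the non-Archimedean factor is not identically zero) yields $\int_{\bR}x_{\infty}^{s}\zeta_{\infty}(x_{\infty})\,dx_{\infty}=0$ for all $s\in\bZ_{+}$, i.e., $\zeta_{\infty}\in\Phi(\bR)$ by~(\ref{8-11}); applying the same condition with $r=q\in\{2,\dots,P\}$ and $s=0$ gives $\int_{\bQ_{q}}\zeta_{q}(x_{q})\,dx_{q}=0$, so by Lemma~\ref{lem1-pp} each $\zeta_{q}\in\Phi(\bQ_{q})$, which via Lemma~\ref{lem1-10-ad}$(2)$ says $\widetilde{\zeta}\in\Phi(\widetilde{\bA})$.

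Now I would apply Lemma~\ref{lem-w-1**-ad-00} to the non-Archimedean factor $\widetilde{\zeta}$, obtaining a \emph{finite} expansion
\[
\widetilde{\zeta}(x')=\sum_{\alpha\in\widetilde{\Lambda}_{0}}c_{\alpha}\,\widetilde{\Psi}_{\alpha}(x'),
\]
where $\widetilde{\Lambda}_{0}\subset\widetilde{\Lambda}$ is finite and none of the involved $\widetilde{\Psi}_{\alpha}$ contain a factor $\phi^{(q)}(x_{q}-a_{q})$ for $q\le m(\alpha)$. Multiplying by $\zeta_{\infty}(x_{\infty})$ gives
\[
\zeta(x)=\sum_{\alpha\in\widetilde{\Lambda}_{0}}\bigl(c_{\alpha}\zeta_{\infty}(x_{\infty})\bigr)\widetilde{\Psi}_{\alpha}(x'),
\]
and since $\Phi(\bR)$ is a linear space, the coefficient functions $c_{\alpha}(x_{\infty}):=c_{\alpha}\zeta_{\infty}(x_{\infty})$ indeed belong to $\Phi(\bR)$. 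Summing the expansions of all elementary summands of an arbitrary $\zeta\in\Phi(\bA)$ and collecting like wavelet terms (whose indices form a finite union of finite sets, hence a finite set) yields~(\ref{wav-9.4=1-ad}).

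The only substantive step is the tensor-splitting in the second paragraph: one must be sure that the Lizorkin moment conditions on a product descend to the individual factors. This is handled cleanly by Lemma~\ref{lem1-10-ad}, so no real obstacle arises; the remainder is bookkeeping that inherits finiteness from Lemma~\ref{lem-w-1**-ad-00}.
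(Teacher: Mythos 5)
Your proposal is correct and follows the route the paper intends: the Corollary is stated without proof as an immediate consequence of Lemma~\ref{lem-w-1**-ad-00}, and your argument (reduce to elementary functions via the definition of $\Phi(\bA)$ and~(\ref{ad-1-test-1-L}), split off the Archimedean factor using the characterization in Lemma~\ref{lem1-10-ad} together with~(\ref{8-11}) and Lemma~\ref{lem1-pp}, then apply the Lemma to the non-Archimedean part and absorb the constants into $c_{\alpha}\zeta_{\infty}\in\Phi(\bR)$) supplies exactly the omitted steps. No gaps; the finiteness of the expansion is correctly inherited from Lemma~\ref{lem-w-1**-ad-00} and the finiteness of the elementary decomposition.
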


Using standard results from the book~\cite{Schaefer}, we obtain the following assertion.

\begin{Proposition}
\label{pr-w-2***-ad}
Any distribution $f\in \Phi'(\bA)$ can be realized in the form of an {\em infinite} sum:
\begin{equation}
\label{wav-9.4=3*-ad}
f(x)=\sum_{\alpha\in \widetilde{\Lambda}}b_{\alpha}(x_{\infty})\widetilde{\Psi}_{\alpha}(x'),
\quad x=(x_{\infty},x')\in \bA,
\end{equation}
where $b_{\alpha}(x_{\infty})\in \Phi'(\bR)$ are some real Lizorkin distributions;
$\widetilde{\Psi}_{\alpha}(x')$ are wavelet functions {\rm(\ref{1-ad+=11-00})}.
\end{Proposition}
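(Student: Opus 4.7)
The plan is to use duality in conjunction with the decomposition established in Corollary~\ref{lem-w-1**-ad}. First I would observe that Corollary~\ref{lem-w-1**-ad} provides a map
$$
\Phi(\bA)\;\ni\;\zeta\;\longmapsto\;(c_{\alpha})_{\alpha\in\widetilde{\Lambda}}\;\in\;\bigoplus_{\alpha\in\widetilde{\Lambda}}\Phi(\bR),
$$
where the right-hand side is the algebraic direct sum (only finitely many nonzero coordinates). Uniqueness of the coefficients $c_{\alpha}$ is ensured because $\{\widetilde{\Psi}_{\alpha}(x')\}_{\alpha\in\widetilde{\Lambda}}$ is an orthonormal system in $L^{2}(\widetilde{\bA})$, so the coefficients are recovered via the $L^{2}$-pairing of $\zeta$ against $\widetilde{\Psi}_{\alpha}$, viewed as a function of $x_{\infty}$. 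Combined with the inductive-limit topology on $\Phi(\bA)$ (see Remark~\ref{Top-Liz}) and the analogous topology on $\Phi(\bR)$, this identification gives a topological isomorphism in the spirit of the standard theory of locally convex direct sums in~\cite{Schaefer}.

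Next, for each $\alpha\in\widetilde{\Lambda}$ I would define a linear functional $b_{\alpha}$ on $\Phi(\bR)$ by
$$
\langle b_{\alpha},\varphi\rangle\;:=\;\langle f,\,\varphi(x_{\infty})\widetilde{\Psi}_{\alpha}(x')\rangle,\qquad \varphi\in\Phi(\bR).
$$
Continuity of the embedding $\varphi\mapsto \varphi\cdot\widetilde{\Psi}_{\alpha}$ from $\Phi(\bR)$ into $\Phi(\bA)$, together with continuity of $f\in\Phi'(\bA)$, yields $b_{\alpha}\in\Phi'(\bR)$ for every $\alpha$.

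Finally I would interpret the series in (\ref{wav-9.4=3*-ad}) in the weak sense. For any test function $\zeta\in\Phi(\bA)$, its decomposition $\zeta=\sum_{\alpha\in\widetilde{\Lambda}}c_{\alpha}(x_{\infty})\widetilde{\Psi}_{\alpha}(x')$ from Corollary~\ref{lem-w-1**-ad} is a \emph{finite} sum, hence by linearity
$$
\langle f,\zeta\rangle\;=\;\sum_{\alpha\in\widetilde{\Lambda}}\langle f,\,c_{\alpha}\widetilde{\Psi}_{\alpha}\rangle\;=\;\sum_{\alpha\in\widetilde{\Lambda}}\langle b_{\alpha},c_{\alpha}\rangle,
$$
which expresses $f$ as the formal series $\sum_{\alpha}b_{\alpha}(x_{\infty})\widetilde{\Psi}_{\alpha}(x')$ in $\Phi'(\bA)$ in the weak-$\ast$ sense. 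The sum makes sense because it has only finitely many nonzero terms when paired against any fixed test function in $\Phi(\bA)$.

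The main obstacle I expect is the careful identification of the inductive-limit topology on $\Phi(\bA)$ with the topology of the algebraic direct sum $\bigoplus_{\alpha}\Phi(\bR)$: one has to verify that both the projections $\zeta\mapsto c_{\alpha}$ and the embeddings $\varphi\mapsto\varphi\cdot\widetilde{\Psi}_{\alpha}$ are continuous, in a way compatible with the inductive families $\Phi^{[m]}(\bA)$ from Remark~\ref{Top-Liz}. Once this is established, the identification $\Phi'(\bA)\cong\prod_{\alpha\in\widetilde{\Lambda}}\Phi'(\bR)$ follows from the standard duality for locally convex direct sums in~\cite{Schaefer}, and the claimed representation (\ref{wav-9.4=3*-ad}) of $f$ is immediate.
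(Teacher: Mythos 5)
Your proposal is correct and follows essentially the same route as the paper: the coefficients are defined by the partial pairing $b_{\alpha}=\langle f(x_{\infty},x'),\widetilde{\Psi}_{\alpha}(x')\rangle$ (the paper's formula (\ref{wav-9.4=4*})), the series is interpreted weakly through the finite decomposition of Corollary~\ref{lem-w-1**-ad} and the orthonormality of the wavelets, so that pairing with any test function reduces to the finite sum $\sum_{\alpha}\langle b_{\alpha},c_{\alpha}\rangle$ as in (\ref{wav-9.4=5*-ad}), with the topological identifications delegated to the standard duality theory in~\cite{Schaefer}. The only difference is that you spell out the continuity of $\varphi\mapsto\varphi\cdot\widetilde{\Psi}_{\alpha}$ and the direct-sum/product duality slightly more explicitly than the paper's sketch does.
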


Here any distribution $f\in \Phi'(\bA)$ is associated with representation
(\ref{wav-9.4=3*-ad}), where the coefficients
\begin{equation}
\label{wav-9.4=4*}
b_{\alpha}(x_{\infty})\stackrel{def}{=}\bigl\langle f(x_{\infty},x'), \widetilde{\Psi}_{\alpha}(x')\bigr\rangle,
\quad \alpha\in \widetilde{\Lambda}.
\end{equation}
And vice versa, taking into account Corollary~\ref{lem-w-1**-ad} and the orthonormality of
the wavelet basis (\ref{1-ad+=11-00}), any infinite sum (\ref{wav-9.4=3*-ad}) is associated
with the distribution $f \in \Phi'(\bA)$, whose action on a test function $\zeta(x) \in \Phi(\bA)$
is defined as
$$
\big\langle f,\zeta\big\rangle
=\Big\langle
\sum_{\beta\in \widetilde{\Lambda}}b_{\beta}(x_{\infty})\widetilde{\Psi}_{\beta}(x'), \,
\sum_{\alpha\in \widetilde{\Lambda}}c_{\alpha}(x_{\infty})\widetilde{\Psi}_{\alpha}(x')\Big\rangle
$$
\begin{equation}
\label{wav-9.4=5*-ad}
=\sum_{\alpha\in \widetilde{\Lambda}}
\big\langle b_{\alpha}(x_{\infty}), c_{\alpha}(x_{\infty})\big\rangle,
\end{equation}
where the latter sum is finite.

\begin{Proposition}
\label{pr-w-2***-ad-1} Any distribution ${\widetilde f}\in \Phi'({\widetilde\bA})$ can be
realized as an {\em infinite} sum of the form
\begin{equation}
\label{wav-9.4=3*-ad-1}
{\widetilde f}(x')=\sum_{p}\sum_{\alpha\in\widetilde{\Lambda}}b_{\alpha}\widetilde{\Psi}_{\alpha}(x'),
\quad x'\in {\widetilde\bA},
\end{equation}
where $b_{\alpha}$ are constants; $\widetilde{\Psi}_{\alpha}(x')$ are wavelet functions
{\rm(\ref{1-ad+=11-00})}.
\end{Proposition}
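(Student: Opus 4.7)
The plan is to mirror the argument used for Proposition~\ref{pr-w-2***-ad}, but to note that in the non-Archimedean setting the coefficients degenerate from real Lizorkin distributions to mere constants, because there is no $x_\infty$-variable over which to integrate. So the proof should be simpler than its adelic counterpart and follow essentially by duality from Lemma~\ref{lem-w-1**-ad-00}.

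Concretely, given $\widetilde f\in\Phi'({\widetilde\bA})$, I would first \emph{define} the candidate coefficients
$$
b_\alpha \stackrel{def}{=} \bigl\langle \widetilde f,\,\widetilde\Psi_\alpha\bigr\rangle,\qquad \alpha\in\widetilde\Lambda,
$$
which makes sense because, as noted in Subsec.~\ref{s7.3}, every $\widetilde\Psi_\alpha$ that does not contain a factor $\phi^{(q)}(x_q-a_q)$ for $q\le m$ lies in $\Phi({\widetilde\bA})$ (and the ones that do contribute nothing, since the corresponding $b_\alpha$ would pair against an expansion coefficient $c_\alpha=0$ by Lemma~\ref{lem-w-1**-ad-00}). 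Next, for an arbitrary test function $\widetilde\zeta\in\Phi({\widetilde\bA})$, invoke Lemma~\ref{lem-w-1**-ad-00} to obtain a \emph{finite} wavelet expansion $\widetilde\zeta=\sum_{\alpha\in\widetilde\Lambda}c_\alpha\widetilde\Psi_\alpha$ with $c_\alpha=\bigl(\widetilde\zeta,\widetilde\Psi_\alpha\bigr)$. Applying $\widetilde f$ termwise (which is legitimate since only finitely many $c_\alpha$ are nonzero) gives
$$
\bigl\langle \widetilde f,\widetilde\zeta\bigr\rangle
=\sum_{\alpha\in\widetilde\Lambda}c_\alpha\,b_\alpha
=\Bigl\langle \sum_{\alpha\in\widetilde\Lambda}b_\alpha\widetilde\Psi_\alpha,\,\widetilde\zeta\Bigr\rangle,
$$
where the last equality reinterprets the pairing via the orthonormality of the basis (\ref{1-ad+=11-00}) in $L^2({\widetilde\bA})$, extended to the Lizorkin duality as in (\ref{wav-9.4=5*-ad}). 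Hence the formal series $\sum_\alpha b_\alpha\widetilde\Psi_\alpha$ converges to $\widetilde f$ in the weak topology of $\Phi'({\widetilde\bA})$.

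Conversely, given any choice of constants $\{b_\alpha\}$, the formula $\widetilde\zeta\mapsto\sum_\alpha b_\alpha c_\alpha(\widetilde\zeta)$ defines a linear functional on $\Phi({\widetilde\bA})$ whose continuity follows from the representation~\eqref{ad-1-test-1-L} of $\Phi({\widetilde\bA})$ as an inductive limit: on each step $\Phi^{[m]}({\widetilde\bA})$ only a finite number of indices $\alpha$ can contribute (by the support/parameter-of-constancy argument used in parts~1--3 of the proof of Lemma~\ref{lem-w-1**-ad-00}), so the functional is continuous on each $\Phi^{[m]}({\widetilde\bA})$ and hence on the inductive limit. This matches the double indexing $\sum_p\sum_{\alpha\in\widetilde\Lambda_p}$ in the statement, since the partition $\widetilde\Lambda=\bigcup_{p}\widetilde\Lambda_p$ (see \eqref{A_(p,m)-phi-00}) is precisely the filtration compatible with the parameter of finiteness.

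The main obstacle I anticipate is not the algebra but the topological bookkeeping: verifying that the series $\sum_\alpha b_\alpha\widetilde\Psi_\alpha$ converges in $\Phi'({\widetilde\bA})$ rather than just pointwise on test functions. This amounts to checking equicontinuity on each $\Phi^{[m]}({\widetilde\bA})$, which reduces to the finite-support arguments (on the Fourier side) from steps~1--3 of the proof of Lemma~\ref{lem-w-1**-ad-00}. Once this is in hand, one can cite~\cite{Schaefer} as in Proposition~\ref{pr-w-2***-ad} to conclude that the wavelet basis is a Schauder basis for $\Phi'({\widetilde\bA})$ in the weak-$*$ topology, finishing the proof.
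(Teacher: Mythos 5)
Your proposal is correct and follows essentially the same route as the paper: the paper obtains this proposition (like Proposition~\ref{pr-w-2***-ad}) by duality from the finite wavelet expansion of test functions in Lemma~\ref{lem-w-1**-ad-00}, defining the coefficients by pairing with the wavelets as in (\ref{wav-9.4=4*}) and interpreting the series weakly as in (\ref{wav-9.4=5*-ad}), with the topological side handled by citing~\cite{Schaefer}. Your continuity discussion is, if anything, more explicit than the paper's; note it is even simpler than you fear, since $\Phi({\widetilde\bA})$ is an inductive limit built from finite-dimensional Bruhat--Schwartz pieces, so the functional defined by arbitrary constants $b_\alpha$ is automatically continuous.
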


It is clear that in Lemma~\ref{lem-w-1**-ad-00}, Corollary~\ref{lem-w-1**-ad},
and Propositions~\ref{pr-w-2***-ad},~\ref{pr-w-2***-ad-1} one can use wavelet functions
(\ref{w-62.8=10**-ad-ss-1-A-0}) instead of wavelet functions (\ref{1-ad+=11-00}).

\section{Pseudo-differential operators on adeles}
\label{s9}

\subsection{Real and $p$-adic fractional operators.}\label{s9.1}
Let us introduce a distribution from ${\cS}'(\bR)$ (we keep the notation
$|\cdot|=|\cdot|_{\infty}$.)
\begin{equation}
\label{15-11}
\kappa_{\alpha}^{(\infty)}(x)=\frac{|x|^{\alpha-1}}{\gamma_1(\alpha)},
\quad \alpha \ne -2s, \, \alpha \ne 1+2s, \quad s=0,1,2,\dots, \quad x\in \bR,
\end{equation}
called the {\it Riesz kernel\/}, where $|x|^{\alpha}$ is a
homogeneous distribution of degree~$\alpha$ defined in
~\cite[Lemma~2.9.]{Sam3}, \cite[(25.19)]{Sam-Kil-Mar},~\cite[Ch.I,\S3.9.]{Gel-S},
and $\gamma_1(\alpha)=\frac{2^{\alpha}\pi^{\frac{1}{2}}\Gamma(\frac{\alpha}{2})}
{\Gamma(\frac{1-\alpha}{2})}$.
The Riesz kernel is an entire function of the complex variable $\alpha$.

One can define the {\it Riesz kernel} (\ref{15-11}) in the {\it real Lizorkin space of
distributions\/} $\Phi'(\bR)$ (see~\cite{Liz3},~\cite[Lemma~2.13.]{Sam3},~\cite[Lemma~25.2.]{Sam-Kil-Mar}):
\begin{equation}
\label{19}
\kappa_{\alpha}^{(\infty)}(x)=\left\{
\begin{array}{lcl}
\displaystyle
\frac{|x|^{\alpha-1}\Gamma(\frac{1-\alpha}{2})}
{2^{\alpha}\sqrt{\pi}\Gamma(\frac{\alpha}{2})}
=\frac{|x|^{\alpha-1}}{2\Gamma(\alpha)\cos(\frac{\pi\alpha}{2})},
&& \alpha \ne -2s, \, \alpha \ne 1+2s, \medskip \\
\displaystyle
(-1)^{s+1}\frac{|x|^{2s}\log|x|}{\pi(2s)!},
&& \alpha=1+2s, \medskip \\
\displaystyle
(-1)^s\delta^{(2s)}(x),
&& \alpha=-2s, \, s\in \bZ_{+}. \\
\end{array}
\right.
\end{equation}
According to~\cite[Ch.II,\S3.3.,(2)]{Gel-S},~\cite[Lemma~2.13.]{Sam3},
~\cite[Lemma~25.2.]{Sam-Kil-Mar},
\begin{equation}
\label{20-11}
F[\kappa_{\alpha}^{(\infty)}(x)](\xi)=|x|^{-\alpha}, \quad x\in \bR.
\end{equation}

Define the {\em real Riesz fractional operator} $D^{\alpha}_{\infty}$ on the
Lizorkin space $\Phi(\bR)$ as a convolution
\begin{equation}
\label{22}
\big(D^{\alpha}_{\infty}\varphi\big)(x)=\big(\kappa_{-\alpha}^{(\infty)}*\varphi\big)(x)
=\bigl\langle \kappa_{-\alpha}^{(\infty)}(\cdot),\varphi(x-\cdot)\bigr\rangle,
\quad \varphi \in \Phi(\bR),
\end{equation}
where $\kappa_{\alpha}^{(\infty)}$ is given by (\ref{19}).
It is clear that~\cite[(25.2)]{Sam-Kil-Mar}
\begin{equation}
\label{23}
\big(D^{\alpha}_{\infty}\varphi\big)(x)=F^{-1}[|\xi|^{\alpha} F[\varphi](\xi) ](x),
\quad \varphi \in \Phi(\bR).
\end{equation}

\begin{Lemma}
\label{lem2-11}
The Lizorkin spaces of test functions $\Phi(\bR)$ and distributions $\Phi'(\bR)$
are invariant under the Riesz fractional operator {\rm(\ref{23})}
and $D^{\alpha}_{\infty}(\Phi'(\bR))=\Phi'(\bR)$.
\end{Lemma}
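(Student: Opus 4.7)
The plan is to prove the lemma in three steps: first establish that $D^{\alpha}_{\infty}$ maps $\Phi(\bR)$ into itself; then show it is invertible on $\Phi(\bR)$ with inverse $D^{-\alpha}_{\infty}$; finally transfer both statements to $\Phi'(\bR)$ by duality.

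For step one, given $\varphi\in\Phi(\bR)$, write $\varphi=F^{-1}[\psi]$ with $\psi\in\Psi(\bR)$; then by the defining formula (\ref{23}) we have $D^{\alpha}_{\infty}\varphi=F^{-1}[|\xi|^{\alpha}\psi(\xi)]$, so the task reduces to showing that the map $\psi\mapsto|\xi|^{\alpha}\psi$ sends $\Psi(\bR)$ into $\Psi(\bR)$. Since every derivative $\psi^{(j)}(0)=0$, Taylor's theorem shows that $\psi$ vanishes at the origin faster than any power of $|\xi|$; thus $|\xi|^{\alpha}\psi(\xi)$ is smooth at $0$ and still has all derivatives vanishing there. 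Away from $0$, $|\xi|^{\alpha}$ is smooth, and Leibniz's rule together with rapid decay of $\psi$ and all its derivatives shows that the product and all its derivatives decay faster than any polynomial at infinity. Hence $|\xi|^{\alpha}\psi\in\Psi(\bR)$, and therefore $D^{\alpha}_{\infty}\varphi\in\Phi(\bR)$.

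For step two, observe that on $\Phi(\bR)$ we have the semigroup identity $D^{\alpha}_{\infty}D^{\beta}_{\infty}\varphi=F^{-1}\bigl[|\xi|^{\alpha}|\xi|^{\beta}F[\varphi](\xi)\bigr]=D^{\alpha+\beta}_{\infty}\varphi$, which is legitimate because after the first application the transform still lies in $\Psi(\bR)$ by step one. Taking $\beta=-\alpha$ gives $D^{\alpha}_{\infty}D^{-\alpha}_{\infty}={\rm Id}$ on $\Phi(\bR)$, so $D^{\alpha}_{\infty}$ is a bijection on $\Phi(\bR)$.

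For step three, the operator on $\Phi'(\bR)$ is defined by duality: $\langle D^{\alpha}_{\infty}f,\varphi\rangle\stackrel{def}{=}\langle f,D^{\alpha}_{\infty}\varphi\rangle$ for $\varphi\in\Phi(\bR)$, which makes sense precisely because of step one, and clearly yields an element of $\Phi'(\bR)$. The bijectivity on test functions in step two dualizes to bijectivity on distributions: for any $g\in\Phi'(\bR)$, the distribution $f$ defined by $\langle f,\varphi\rangle=\langle g,D^{-\alpha}_{\infty}\varphi\rangle$ satisfies $D^{\alpha}_{\infty}f=g$, proving $D^{\alpha}_{\infty}(\Phi'(\bR))=\Phi'(\bR)$. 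The main (and only nontrivial) obstacle is the verification in step one that multiplication by $|\xi|^{\alpha}$ preserves $\Psi(\bR)$, and in particular keeps the product smooth at the origin; this is precisely where the infinite-order vanishing condition on $\Psi(\bR)$ is indispensable, and it is what motivates using the Lizorkin spaces rather than the full Schwartz space.
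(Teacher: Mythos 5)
The paper does not actually prove this lemma: it is stated as a known fact about the classical real Lizorkin theory, with references to Lizorkin's papers and to Samko and Samko--Kilbas--Marichev, so there is no in-paper argument to compare against. Your proof is correct and is essentially the standard argument from that literature: reduce to the Fourier side, show that multiplication by $|\xi|^{\alpha}$ maps $\Psi(\bR)$ into itself (this is exactly where the infinite-order vanishing at the origin is used), deduce the group property $D^{\alpha}_{\infty}D^{\beta}_{\infty}=D^{\alpha+\beta}_{\infty}$ on $\Phi(\bR)$, and pass to $\Phi'(\bR)$ by duality. Two small points deserve a sentence each in a fully written-out version. First, in the smoothness-at-zero step you should note explicitly that not only $\psi$ but every derivative $\psi^{(j)}$ vanishes to infinite order at $0$ (Taylor applied to each $\psi^{(j)}$), since in the Leibniz expansion of $\frac{d^{k}}{d\xi^{k}}\bigl(|\xi|^{\alpha}\psi(\xi)\bigr)$ the factors $\frac{d^{j}}{d\xi^{j}}|\xi|^{\alpha}\sim|\xi|^{\alpha-j}$ blow up near the origin and are tamed precisely by this; the conclusion then follows for all $\alpha\in\bC$, including the exceptional values excluded in (\ref{15-11}), which is why the Lizorkin framework gives a group for every order. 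Second, for the duality step to land in $\Phi'(\bR)$ you implicitly need $D^{\alpha}_{\infty}$ (equivalently, multiplication by $|\xi|^{\alpha}$ on $\Psi(\bR)$) to be continuous in the topology inherited from $\cS(\bR)$; this is a routine seminorm estimate, but it should be stated, since "$f\circ D^{\alpha}_{\infty}$ is linear" alone does not give membership in the topological dual. With those additions your argument is complete and matches the approach the paper's cited sources take.
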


The {\em $p$-adic fractional operator} $D^{\alpha}_{p}$ was
introduced on the space of distributions ${\cD}'(\bQ_p)$ in~\cite{Taib1},
~\cite[III.4.]{Taib3}.
In~\cite{Al-Kh-Sh3}, the fractional operator $D^{\alpha}_{p}$ was defined in the
Lizorkin space of distributions $\Phi'(\bQ_p)$ for all $\alpha\in \bC$ by
the following relations:
\begin{equation}
\label{61**-ad}
\big(D^{\alpha}_{p}f\big)(x)
=F^{-1}\big[|\cdot|^{\alpha}_pF[f](\cdot)\big](x),
\quad f \in \Phi'(\bQ_p), \quad \alpha\in \bC.
\end{equation}
Representation (\ref{61**-ad}) can be rewritten as a convolution
\begin{equation}
\label{61**-ad-11}
\big(D^{\alpha}_{p}f\big)(x)=\big(\kappa_{-\alpha}^{(p)}*f\big)(x)
=\langle \kappa_{-\alpha}^{(p)}(\cdot),f(x-\cdot)\rangle,
\quad f\in \Phi'(\bQ_p), \quad \alpha \in \bC,
\end{equation}
where (see~\cite{Al-Kh-Sh3})
\begin{equation}
\label{19-p}
\kappa_{\alpha}^{(p)}(x)=\left\{
\begin{array}{lll}
\frac{|x|_p^{\alpha-1}}{\Gamma_p(\alpha)}, &&
\alpha \ne 0, \, \,  1, \\
\delta(x), && \alpha=0, \\
-\frac{1-p^{-1}}{\log p}\log|x|_p, && \alpha=1, \\
\end{array}
\right.
\quad x\in \bQ_p,
\end{equation}
is the {\it Riesz kernel}, and
$$
\Gamma_p(\alpha)\stackrel{def}{=}\int_{\bQ_p}|x|_p^{\alpha-1}\chi_p(x)\,dx
=\frac{1-p^{\alpha-1}}{1-p^{-\alpha}}
$$
is the $p$-adic $\Gamma$-{\it function} (see~\cite[III,Theorem~(4.2)]{Taib3},
~\cite[VIII,(4.4)]{Vl-V-Z}).

\begin{Lemma}
\label{lem4-11}
{\rm(\cite{Al-Kh-Sh3})}
The Lizorkin spaces of test functions $\Phi(\bQ_p)$ and distributions $\Phi'(\bQ_p)$
are invariant under the fractional operator {\rm(\ref{61**-ad})}
and $D^{\alpha}_{p}(\Phi'(\bQ_p))=\Phi'(\bQ_p)$.
\end{Lemma}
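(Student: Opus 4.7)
The plan is to work on the Fourier side, where the operator $D^\alpha_p$ acts simply as multiplication by $|\xi|_p^\alpha$. By Lemma \ref{lem1-pp}, a function $\phi$ lies in $\Phi(\bQ_p)$ if and only if $\psi=F^{-1}[\phi]\in\Psi(\bQ_p)=\{\psi\in\cD(\bQ_p):\psi(0)=0\}$, so it suffices to show that multiplication by $|\xi|_p^\alpha$ preserves $\Psi(\bQ_p)$.

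The key step is the following observation, which uses the non-Archimedean structure crucially: if $\psi\in\Psi(\bQ_p)$, then $\psi$ is locally constant and $\psi(0)=0$, hence there exists $l=l(\psi)\in\bZ$ such that $\psi\equiv 0$ on the ball $B_l$. Thus $\supp\psi\subset\bQ_p\setminus B_l$, on which $|\xi|_p\geq p^{l+1}>0$ is itself locally constant. Combined with the compact support of $\psi$, this shows that $|\xi|_p^\alpha\psi(\xi)$ is locally constant, compactly supported, and vanishes in a neighborhood of $0$ for every $\alpha\in\bC$. Therefore $|\xi|_p^\alpha\psi(\xi)\in\Psi(\bQ_p)$, and by (\ref{61**-ad}) we conclude $D^\alpha_p\phi=F^{-1}[|\xi|_p^\alpha\psi]\in\Phi(\bQ_p)$.

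For distributions, I would proceed by duality. Since $F:\Psi(\bQ_p)\to\Phi(\bQ_p)$ is a linear topological isomorphism, the adjoint gives $F:\Phi'(\bQ_p)\to\Psi'(\bQ_p)$ as an isomorphism. Multiplication by $|\xi|_p^\alpha$ is well defined on $\Psi'(\bQ_p)$ via $\langle|\xi|_p^\alpha g,\eta\rangle=\langle g,|\xi|_p^\alpha\eta\rangle$ for $\eta\in\Psi(\bQ_p)$ (the pairing is legitimate since $|\xi|_p^\alpha\eta\in\Psi(\bQ_p)$ by the previous paragraph), producing an element of $\Psi'(\bQ_p)$. Applying $F^{-1}$ returns an element of $\Phi'(\bQ_p)$, so $D^\alpha_p$ maps $\Phi'(\bQ_p)$ into itself.

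The surjectivity $D^\alpha_p(\Phi'(\bQ_p))=\Phi'(\bQ_p)$ follows from the group law $D^\alpha_p D^\beta_p=D^{\alpha+\beta}_p$ on $\Phi'(\bQ_p)$, which is immediate from (\ref{61**-ad}) because on $\Psi'(\bQ_p)$ the symbols satisfy $|\xi|_p^\alpha\cdot|\xi|_p^\beta=|\xi|_p^{\alpha+\beta}$ pointwise on $\bQ_p\setminus\{0\}$ (and test functions from $\Psi(\bQ_p)$ do not see the point $0$). In particular $D^\alpha_p D^{-\alpha}_p=D^0_p=\mathrm{Id}$, so $D^{-\alpha}_p$ is a two-sided inverse. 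The main subtlety, and only real point requiring care, is the verification that multiplication by $|\xi|_p^\alpha$ does not leave $\Psi(\bQ_p)$: this is where the ultrametric forces $\psi$ to vanish on an entire ball around the origin, eliminating any singularity of $|\xi|_p^\alpha$ regardless of $\Re\alpha$ — a phenomenon with no analogue in the real Lizorkin theory, where the vanishing of $\psi$ to infinite order at $0$ must be arranged by hand through (\ref{8-11}).
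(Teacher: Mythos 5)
The paper states this lemma without proof, simply citing \cite{Al-Kh-Sh3}, and your argument is exactly the standard one from that reference: since any $\psi\in\Psi(\bQ_p)$ is locally constant with $\psi(0)=0$ it vanishes on a whole ball $B_l$, so $|\xi|_p^\alpha\psi(\xi)$ stays in $\Psi(\bQ_p)$ for every $\alpha\in\bC$, the invariance of $\Phi'(\bQ_p)$ follows by duality (continuity is automatic here, as $p$-adic distributions are defined as all linear functionals), and surjectivity follows from $D^{\alpha}_pD^{-\alpha}_p=D^{0}_p=\mathrm{Id}$ on $\Phi'(\bQ_p)$. Your proof is correct and takes essentially the same approach as the cited source, so no further comment is needed.
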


\subsection{Adelic fractional operators on the Lizorkin spaces.}\label{s9.2}
Let us introduce on the space $\Phi(\bA)$ the {\em adelic fractional operator}
$D^{\widehat{\gamma}}$ of order
$\widehat{\gamma}=(\gamma_{\infty},\gamma_{2},\dots,\gamma_{p},\dots)\in \bC^{\infty}$,
which is defined by its projection on any subspace of test functions
$\Phi^{[m]}(\bA)\subset \Phi(\bA)$ (for details, see Remark~\ref{A=lim-m-A-(m)}):
\begin{equation}
\label{ad-frac-op}
D^{\widehat{\gamma}}\big|_{\Phi^{[m]}(\bA)}
\stackrel{def}{=}D^{\widehat{\gamma}}_{(m)}
=\bigotimes\limits_{p\in \{\infty,2,3,\dots,m\}}D^{\gamma_p}_{p}\,
\,\otimes\bigotimes\limits_{p>m}Id_p, \quad m\in V_{\bQ},
\end{equation}
where $D^{\gamma_\infty}_{\infty}$ and $D^{\gamma_p}_{p}$ are
defined by (\ref{22}), (\ref{23}) and (\ref{61**-ad}),
(\ref{61**-ad-11}), respectively, $Id_p$ is the identity operator in
$\Phi(\bQ_p)$. Here the fractional operator
$D^{\widehat{\gamma}}_{(m)}$ is an infinite tensor product of
one-dimensional operators.

If
$$
\zeta(x)=\zeta_{\infty}(x_{\infty})\prod_{p\in V_\bQ\setminus\infty}\zeta_{p}(x_{p})\in \Phi^{[m]}(\bA)
$$
is an elementary function, i.e., $\zeta_{p}(x_{p})=\phi^{(p)}(\xi_p)=\Omega(|x_p|_p)$
for all $p>m(\zeta)$, then taking into account Definition~(\ref{ad-frac-op}) and (\ref{23}),
(\ref{61**-ad}), we obtain
\begin{equation}
\label{op-ad-2}
\big(D^{\widehat{\gamma}}\big|_{\Phi^{[m]}(\bA)}\zeta\big)(x)
=\big(D^{\widehat{\gamma}}_{(m)}\zeta\big)(x)
=F^{-1}\big[|\cdot|^{\widehat{\gamma}}_{(m)}F[\zeta](\cdot)\big](x),
\end{equation}
where
\begin{equation}
\label{op-ad-1}
|\xi|^{\widehat{\gamma}}_{(m)}=\prod_{p\in\{\infty,2,3,\dots,m\}}|\xi_p|_p^{\gamma_p}\prod_{p>m}\phi^{(p)}(\xi_p),
\quad \xi=(\xi_{\infty},\xi_{2},\dots \xi_{p},\dots) \in \bA,
\end{equation}
is a symbol of the operator $D^{\widehat{\gamma}}_{(m)}$, $m\in V_{\bQ}$.

It is clear that $|\xi|^{\widehat{\gamma}}_{(m)}$ for all
$\widehat{\gamma}=(\gamma_{\infty},\gamma_{2},\dots,\gamma_{p},\dots)\in \bC^{\infty}$
such that for all $p>m$, $\gamma_p=0$ is a multiplier in the space of test functions
$\Psi^{[m]}(\bQ_p)$, $m\in V_\bQ$.

According to (\ref{22}), (\ref{61**-ad-11}), (\ref{14.2}), relation (\ref{op-ad-2})
can be rewritten as
\begin{equation}
\label{op-ad-3}
\big(D^{\widehat{\gamma}}_{(m)}\zeta\big)(x)=\big(\kappa_{-\widehat{\gamma};(m)}*\zeta\big)(x)
=\bigl\langle \kappa_{-\widehat{\gamma};(m)}(\cdot),\zeta(x-\cdot)\bigr\rangle,
\quad \zeta \in \Phi(\bA),
\end{equation}
where
$$
\kappa_{\widehat{\gamma};(m)}(\xi)
=\prod_{p\in\{\infty,2,3,\dots,m\}}\kappa_{\gamma_{p}}^{(p)}(\xi_p)\prod_{p>m}\phi^{(p)}(\xi_p)
$$
is the {\it adelic Riesz kernel}, and $\kappa_{\gamma_{\infty}}^{(\infty)}$ and $\kappa_{\gamma_{p}}^{(p)}$
are given by (\ref{19}) and (\ref{19-p}), respectively.

For $f\in \Phi'(\bA)$ we define the distribution $D^{\widehat{\gamma}}f$ by the relation
\begin{equation}
\label{62**-ad}
\langle D^{\widehat{\gamma}}f,\zeta\rangle\stackrel{def}{=}
\langle f, D^{\widehat{\gamma}}_{(m)}\zeta\rangle,
\qquad \forall \, \zeta\in \Phi^{[m]}(\bA), \quad m\in V_{\bQ}\setminus\infty.
\end{equation}

It is easy to see that Lemmas~\ref{lem2-11},~\ref{lem4-11} imply the following statements.

\begin{Lemma}
\label{lem4-11-ad}
The Lizorkin spaces of test functions $\Phi(\bA)$ and distributions $\Phi'(\bA)$
are invariant under the fractional operator $D^{\widehat{\gamma}}_{(m)}$
and $D^{\widehat{\gamma}}_{(m)}(\Phi'(\bA))=\Phi'(\bA)$.
\end{Lemma}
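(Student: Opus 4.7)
The plan is to reduce to elementary test functions and then invoke the one-dimensional invariance Lemmas~\ref{lem2-11} and~\ref{lem4-11} factor by factor through the tensor product definition~(\ref{ad-frac-op}). By linearity and the inductive limit decomposition~(\ref{ad-1-test-1-L}) of $\Phi(\bA)$, it suffices to show that $D^{\widehat{\gamma}}_{(m)}$ sends every elementary test function $\zeta\in\Phi^{[m]}(\bA)$ of the form $\zeta=\zeta_{\infty}\otimes\bigotimes_{p}\zeta_{p}$, with $\zeta_{p}=\phi^{(p)}$ for $p>m$, to another element of $\Phi(\bA)$.

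Applying~(\ref{ad-frac-op}) factor by factor gives
\begin{equation*}
D^{\widehat{\gamma}}_{(m)}\zeta
=(D^{\gamma_{\infty}}_{\infty}\zeta_{\infty})\otimes
\bigotimes_{2\le p\le m}(D^{\gamma_{p}}_{p}\zeta_{p})\otimes
\bigotimes_{p>m}\phi^{(p)}.
\end{equation*}
From the characterization in Lemma~\ref{lem1-10-ad}, membership $\zeta\in\Phi^{[m]}(\bA)$ forces $\zeta_{\infty}\in\Phi(\bR)$ and $\zeta_{p}\in\Phi(\bQ_{p})$ for each active prime $2\le p\le m$. Lemma~\ref{lem2-11} then gives $D^{\gamma_{\infty}}_{\infty}\zeta_{\infty}\in\Phi(\bR)$, and Lemma~\ref{lem4-11} gives $D^{\gamma_{p}}_{p}\zeta_{p}\in\Phi(\bQ_{p})$, while the factors beyond $m$ are untouched. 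The moment conditions~(\ref{50-ad}) are therefore preserved at every relevant prime, which confirms $D^{\widehat{\gamma}}_{(m)}\zeta\in\Phi^{[m]}(\bA)\subseteq\Phi(\bA)$.

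For the distribution statement I would transfer invariance through the duality~(\ref{62**-ad}): for $f\in\Phi'(\bA)$ the functional $\zeta\mapsto\langle f,D^{\widehat{\gamma}}_{(m)}\zeta\rangle$ is well-defined and continuous on each $\Phi^{[m]}(\bA)$ by what has just been proved, and hence on $\Phi(\bA)$ by the inductive limit topology, so $D^{\widehat{\gamma}}f\in\Phi'(\bA)$. Surjectivity $D^{\widehat{\gamma}}(\Phi'(\bA))=\Phi'(\bA)$ is then obtained from the group identity $D^{\widehat{\gamma}}_{(m)}D^{-\widehat{\gamma}}_{(m)}=Id$, inherited factorwise from $D^{\alpha}_{\infty}D^{-\alpha}_{\infty}=Id$ and $D^{\alpha}_{p}D^{-\alpha}_{p}=Id$ via the multiplicative symbol representations~(\ref{23}) and~(\ref{61**-ad}). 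The main delicate point I anticipate is verifying the consistency of Definition~(\ref{ad-frac-op}) across different indices $m$: for $\zeta\in\Phi^{[P]}(\bA)\subseteq\Phi^{[m]}(\bA)$ with $P<m$, the two expressions $D^{\widehat{\gamma}}_{(P)}\zeta$ and $D^{\widehat{\gamma}}_{(m)}\zeta$ must agree, which forces $D^{\gamma_{p}}_{p}\phi^{(p)}=\phi^{(p)}$ for $P<p\le m$ and hence a compatibility constraint on $\widehat{\gamma}$ (essentially eventual vanishing of the components $\gamma_{p}$); this consistency check is what makes the tensor product definition unambiguous before the preservation argument can be applied.
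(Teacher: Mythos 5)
Your main argument is exactly the paper's intended one: the paper disposes of this lemma with the single remark that it follows from Lemmas~\ref{lem2-11} and~\ref{lem4-11}, i.e.\ precisely the factor-by-factor application of the one-dimensional invariance through the tensor-product definition~(\ref{ad-frac-op}) that you carry out, with the distributional half obtained by duality~(\ref{62**-ad}) and the group identity of Proposition~\ref{the-abelian-2-ad}. (The paper also offers an equivalent route, the multiplier/symbol argument proved for the general class in Lemma~\ref{lem4.3-ad}, of which the fractional operators are stated to be a particular case; your factorwise reduction is the same in substance.)

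The point to correct is your closing ``delicate point''. No compatibility constraint on $\widehat{\gamma}$ (such as eventual vanishing of the components $\gamma_p$) is needed, and imposing one would contradict the statement, which is asserted for arbitrary $\widehat{\gamma}\in\bC^{\infty}$ (cf.\ Proposition~\ref{the-abelian-2-ad}). The overlap you posit --- an elementary $\zeta$ with parameter of finiteness $P$ sitting inside $\Phi^{[m]}(\bA)$ with $m>P$, so that consistency would force $D^{\gamma_p}_{p}\phi^{(p)}=\phi^{(p)}$ for $P<p\le m$ --- does not occur: by condition $(iv)$ in the definition of $\Psi(\bA)$, equivalently by the moment conditions~(\ref{50-ad}) of Lemma~\ref{lem1-10-ad}, every factor of an elementary function of $\Phi^{[m]}(\bA)$ at a place $r\le m$ must lie in the one-dimensional Lizorkin space, and since $\int_{\bQ_r}\Omega(|x_r|_r)\,dx_r=1\ne 0$ the function $\phi^{(r)}$ can never occur as a factor at a place $r\le m$. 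In other words, $D^{\widehat{\gamma}}$ is applied to each elementary $\zeta$ through its own parameter of finiteness $m=m(\zeta)$ (this is how~(\ref{op-ad-2}) is written): the stabilizing factors $\phi^{(p)}$, $p>m$, only ever meet the identity, and the components $\gamma_p$ with $p>m(\zeta)$ simply never act on $\zeta$. This matters, because $D^{\gamma_p}_{p}\phi^{(p)}\ne\phi^{(p)}$ for $\gamma_p\ne 0$ (indeed $\kappa^{(p)}_{-\gamma_p}*\Omega$ is in general not even compactly supported), so if the situation you describe really arose, the construction would break down rather than merely constrain $\widehat{\gamma}$; the definition is set up exactly so that it is excluded. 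You are right that the paper is loose about whether the subspaces $\Phi^{[m]}(\bA)$ are literally nested, but the resolution is the one just described, not a restriction on $\widehat{\gamma}$, and with that final paragraph deleted your proof is the paper's proof.
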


\begin{Proposition}
\label{the-abelian-2-ad}
The family of operators $\{D^{\widehat{\gamma}}:\widehat{\gamma} \in \bC^{\infty}\}$
on the space of distributions $\Phi'(\bA)$ forms an abelian group: if $f \in \Phi'(\bA)$ then
$$
\begin{array}{rcl}
\displaystyle
D^{\widehat{\gamma}}D^{\widehat{\beta}}f&=&
D^{\widehat{\beta}}D^{\widehat{\gamma}}f=D^{\widehat{\gamma}+\widehat{\beta}}f, \medskip \\
\displaystyle
D^{\widehat{\gamma}}D^{-\widehat{\gamma}}f&=&f,
\qquad \widehat{\gamma}, \, \widehat{\beta} \in \bC^{\infty}. \\
\end{array}
$$
\end{Proposition}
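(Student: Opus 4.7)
The plan is to reduce the group law for distributions to a symbol-level identity on test functions via the duality definition (\ref{62**-ad}), and then to the trivial commutativity of multiplication on the Fourier side.

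First, fix $f \in \Phi'(\bA)$ and $\zeta \in \Phi^{[m]}(\bA)$ for some $m \in V_{\bQ}\setminus\{\infty\}$. Applying (\ref{62**-ad}) twice, and using the fact from Lemma~\ref{lem4-11-ad} that $D^{\widehat{\gamma}}_{(m)}$ maps $\Phi^{[m]}(\bA)$ into itself (so that the second application of (\ref{62**-ad}) with the same parameter $m$ is legal), I would write
$$
\langle D^{\widehat{\gamma}}D^{\widehat{\beta}}f,\zeta\rangle
= \langle D^{\widehat{\beta}}f,\,D^{\widehat{\gamma}}_{(m)}\zeta\rangle
= \langle f,\,D^{\widehat{\beta}}_{(m)}D^{\widehat{\gamma}}_{(m)}\zeta\rangle.
$$
So the whole statement reduces to proving on $\Phi^{[m]}(\bA)$ that
$D^{\widehat{\beta}}_{(m)}D^{\widehat{\gamma}}_{(m)} = D^{\widehat{\gamma}}_{(m)}D^{\widehat{\beta}}_{(m)} = D^{\widehat{\gamma}+\widehat{\beta}}_{(m)}$, and that $D^{\widehat{0}}_{(m)}$ acts as the identity.

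Next I would pass to the Fourier side via (\ref{op-ad-2}). There
$D^{\widehat{\gamma}}_{(m)}$ is simply multiplication of $F[\zeta]$ by the symbol $|\xi|^{\widehat{\gamma}}_{(m)}$ from (\ref{op-ad-1}), so the composition corresponds to pointwise multiplication of symbols. The key computation is
$$
|\xi|^{\widehat{\gamma}}_{(m)}\,|\xi|^{\widehat{\beta}}_{(m)}
=\prod_{p\le m}|\xi_p|_p^{\gamma_p+\beta_p}\prod_{p>m}\bigl(\phi^{(p)}(\xi_p)\bigr)^2
=\prod_{p\le m}|\xi_p|_p^{\gamma_p+\beta_p}\prod_{p>m}\phi^{(p)}(\xi_p)
=|\xi|^{\widehat{\gamma}+\widehat{\beta}}_{(m)},
$$
where the middle equality uses $(\phi^{(p)})^2=\phi^{(p)}$ since $\phi^{(p)}=\Omega(|\cdot|_p)$ is $\{0,1\}$-valued. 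Commutativity is then automatic, and the inverse identity follows from the special case $\widehat{\beta}=-\widehat{\gamma}$ together with the observation that the symbol $|\xi|^{\widehat{0}}_{(m)}=\prod_{p>m}\phi^{(p)}(\xi_p)$ acts as the identity on $F[\Phi^{[m]}(\bA)]=\Psi^{[m]}(\bA)$ (any $\eta\in\Psi^{[m]}(\bA)$ already contains the factor $\phi^{(p)}$ at places $p>m$, so multiplication by $\phi^{(p)}$ is absorbed). Finally one undoes the Fourier transform and reads the identity back on $\Phi^{[m]}(\bA)$.

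The main obstacle is technical bookkeeping rather than real mathematics: one must verify that the tensor-product composition identity $\bigl(\bigotimes_p A_p\bigr)\bigl(\bigotimes_p B_p\bigr)=\bigotimes_p (A_pB_p)$ used above is legitimate for the family $D^{\gamma_p}_p$ on the respective one-dimensional Lizorkin spaces (where for the real factor one invokes (\ref{23}) and Lemma~\ref{lem2-11}, for the $p$-adic ones (\ref{61**-ad}) and Lemma~\ref{lem4-11}), and then that the stabilizing Id-factors at the places $p>m$ are compatible with the $\phi^{(p)}$-factors in the symbol. This is what makes the independence of the right-hand side of (\ref{62**-ad}) on the choice of $m\ge m(\zeta)$ work, and once this stability is in place the symbol identity above completes the proof.
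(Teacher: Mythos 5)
Your proof is correct and takes essentially the route the paper intends: the paper gives no detailed argument, merely asserting that the Proposition follows from the one-dimensional Lemmas~\ref{lem2-11} and~\ref{lem4-11}, and your duality-plus-Fourier-multiplier computation (with the idempotency of $\Omega(|\cdot|_p)$ absorbing the stabilizing factors at places $p>m$, and $\kappa_0=\delta$ giving the identity) is exactly the standard argument that makes this precise. No gaps.
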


For the case $\widehat{\gamma}=(\gamma_{\infty},\gamma_{2},\dots,\gamma_{p},\dots)\in \bC^{\infty}$
such that $\gamma_{\infty}=\gamma_{2}=\dots=\gamma_{p}=\dots=\gamma\in \bC$
we shall write $D^{\gamma}$ instead of $D^{\widehat{\gamma}}$.
Similarly to (\ref{ad-frac-op}), the {\em adelic fractional operator} $D^{\gamma}$ of order
$\gamma\in \bC^{\infty}$ is defined by its projection on any subspace $\Phi^{[m]}(\bA)\subset \Phi(\bA)$
(see (\ref{ad-1-test-1-L})):
\begin{equation}
\label{ad-frac-op-2}
D^{\gamma}\big|_{\Phi^{[m]}(\bA)}
\stackrel{def}{=}D^{\gamma}_{(m)}=\bigotimes\limits_{p\in
\{\infty,2,3,\dots,m\}}D^{\gamma}_{p}\,
\,\otimes\bigotimes\limits_{p>m}Id_p, \quad m\in V_{\bQ},
\end{equation}
where $D^{\gamma}_{\infty}$ and $D^{\gamma}_{p}$ are defined by
(\ref{22}), (\ref{23}) and (\ref{61**-ad}), (\ref{61**-ad-11}),
respectively, $Id_p$ is the identity operator in $\Phi(\bQ_p)$.
If
$$
\zeta(x)=\zeta_{\infty}(x_{\infty})\prod_{p\in V_\bQ\setminus\infty}\zeta_{p}(x_{p})\in \Phi^{[m]}(\bA)
$$
is an elementary function, i.e., $\zeta_{p}(x_{p})=\phi^{(p)}(\xi_p)=\Omega(|x_p|_p)$
for all $p>m(\zeta)$, then taking into account Definition~(\ref{ad-frac-op-2}) and (\ref{23}),
(\ref{61**-ad}), we obtain
\begin{equation}
\label{op-ad-2-1}
\big(D^{\gamma}\big|_{\Phi^{[m]}(\bA)}\zeta\big)(x)=\big(D^{\gamma}_{(m)}\zeta\big)(x)
=F^{-1}\big[|\cdot|^{\gamma}_{(m)}F[\zeta](\cdot)\big](x),
\end{equation}
where
\begin{equation}
\label{op-ad-1-1}
|\xi|^{\gamma}_{(m)}=\prod_{p\in\{\infty,2,3,\dots,m\}}|\xi_p|_p^{\gamma}\prod_{p>m}\phi^{(p)}(\xi_p),
\quad \xi=(\xi_{\infty},\xi_{2},\dots \xi_{p},\dots) \in \bA.
\end{equation}
is the symbol of the operator $D^{\gamma}_{(m)}$, $m\in V_{\bQ}$.

\subsection{One class of adelic pseudo-differential operators.}\label{s9.3}
On the adelic Lizorkin space of distributions $\Phi'(\bA)$ we introduce a
pseudo-differential operators $A$ with the symbol
$$
\cA(\xi)=\prod_{p\in V_{\bQ}}\cA_p(\xi_p), \quad \xi \in \bA,
$$
where $\cA_{\infty}(\xi_{\infty})\in C^{\infty}(\bR\setminus \{0\})$ and
$\big|\frac{d^s}{d\xi_{\infty}^s}\cA_{\infty}(\xi_{\infty})\big|\le M_{s}|\xi_{\infty}|^{-m_s}$
($m_s\ge 0$) in a neighborhood of $\xi_{\infty}=0$; \,
$\cA_p(\xi_p)\in \cE(\bQ_p\setminus \{0\})$, $p=2,3,\dots$.

The adelic pseudo-differential operator $A$ is defined by its projection on
any subspace of test functions $\Phi^{[m]}(\bA)\subset \Phi(\bA)$
(see (\ref{ad-1-test-1-L})):
\begin{equation}
\label{ad-frac-op-pd}
A\big|_{\Phi^{[m]}(\bA)}\stackrel{def}{=}A_{(m)}=F^{-1}\,\cA_{(m)}\,F,
\quad m\in V_{\bQ},
\end{equation}
where
\begin{equation}
\label{op-ad-1-2}
\cA_{(m)}(\xi)=\cA_{\infty}(\xi_{\infty})\prod_{p\in \{2,3,\dots,m\}}\cA_{p}(\xi_{p})\prod_{p>m}\phi^{(p)}(\xi_p),
\quad \xi \in \bA,
\end{equation}
is the symbol of the pseudo-differential operator $A_{(m)}$.
If
$$
\zeta(x)=\zeta_{\infty}(x_{\infty})\prod_{p\in V_\bQ\setminus\infty}\zeta_{p}(x_{p})\in \Phi^{[m]}(\bA)
$$
is an elementary function, i.e., $\zeta_{p}(x_{p})=\phi^{(p)}(\xi_p)=\Omega(|x_p|_p)$
for all $p>m(\zeta)$, then taking into account Definition~(\ref{ad-frac-op-pd}), (\ref{op-ad-1-2}),
we obtain
\begin{equation}
\label{62**-1-ad}
A\big|_{\Phi^{[m]}(\bA)}\zeta=A_{(m)}\zeta=\big(F^{-1}\,\cA_{(m)}\, F\big)\zeta,
\end{equation}
where the symbol $\cA_{(m)}$ is defined by (\ref{op-ad-1-2}).
For any test function $\zeta=\sum_{k=1}^r\zeta^{\,r}\in \Phi^{[m]}(\bA)$, where $\zeta^{\,r}$
are elementary functions, the operator $A$ is defined by the relation
$$
(A\big|_{\Phi^{[m]}(\bA)}\zeta)(x)=(A_{(m)}\zeta)(x)=\sum_{k=1}^r (A_{(m)}\zeta^{\,r})(x).
$$

Now we define a conjugate pseudo-differential operator $A^{T}$
on $\Phi(\bA)$. For any function $\zeta\in \Phi^{[m]}(\bA)$:
$$
(A^{T}\big|_{\Phi^{[m]}(\bA)}\zeta)(x)\stackrel{def}{=}(A^{T}_{(m)}\zeta)(x)=F^{-1}\big[\,\overline{\cA_{(m)}(-\xi)}\,F[\zeta](\xi)\big](x).
$$
Then the operator $A$ in the Lizorkin space of distributions
$\Phi'(\bA)$ is defined in the usual way: for $f \in \Phi'(\bA)$ we have
\begin{equation}
\label{64.4-ad}
\langle Af,\zeta\rangle=\langle f,A^{T}_{(m)}\zeta\rangle,
\qquad \forall \, \zeta\in \Phi^{[m]}(\bA), \quad m\in V_{\bQ}\setminus\infty.
\end{equation}
It follows from the latter relation and (\ref{alg-53A-test}) that
\begin{equation}
\label{64.4--1-ad}
Af=\big(F^{-1}\,\cA\,F\big)f, \qquad f \in \Phi'(\bA).
\end{equation}

\begin{Lemma}
\label{lem4.3-ad}
The Lizorkin spaces $\Phi(\bA)$ and $\Phi'(\bA)$ are invariant under
the operator {\rm(\ref{64.4--1-ad})}.
\end{Lemma}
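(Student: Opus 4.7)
The plan is to reduce the statement to a factor-by-factor verification on the one-dimensional components, exploiting the fact that the Lizorkin space of test functions is built from $\Psi$-functions which vanish to infinite order (resp.\ to first order) at the origin of each real (resp.\ $p$-adic) factor, precisely where the symbol $\cA$ may be singular.

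First, I would handle the invariance of $\Phi(\bA)$. Take $\zeta\in\Phi(\bA)$; by the inductive-limit representation (\ref{ad-1-test-1-L}) there is $m\in V_\bQ\setminus\{\infty\}$ with $\zeta\in\Phi^{[m]}(\bA)$, so $\zeta=F[\eta]$ for some $\eta\in\Psi^{[m]}(\bA)$, and by linearity I may assume $\eta$ is an elementary function $\eta(\xi)=\eta_\infty(\xi_\infty)\prod_{p\ne\infty}\eta_p(\xi_p)$ with $\eta_p=\phi^{(p)}$ for $p>m$. Using (\ref{14.2}) and the Fourier inversion on $\bA$, I get $F[\zeta](\xi)=\eta^{-}(\xi):=\eta(-\xi)$, which again lies in $\Psi^{[m]}(\bA)$ since the defining vanishing conditions in $\Psi(\bA)$ are symmetric under $\xi\mapsto -\xi$. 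By definition (\ref{62**-1-ad}),
$$
A_{(m)}\zeta = F^{-1}\bigl[\cA_{(m)}\cdot\eta^{-}\bigr],
$$
so everything reduces to showing that $\cA_{(m)}\cdot\eta^{-}\in\Psi^{[m]}(\bA)$.

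The factor-by-factor check is the main body of the argument. For $p>m$ the product is $\phi^{(p)}(\xi_p)\cdot\phi^{(p)}(\xi_p)=\phi^{(p)}(\xi_p)$, so the stabilization condition $(iii)$ is preserved. For $2\le p\le m$, the function $\eta_p^-\in\cD(\bQ_p)$ satisfies $\eta_p^-(0)=0$ and hence, by local constancy, vanishes on some ball $B_\ell(0)$; since $\cA_p\in\cE(\bQ_p\setminus\{0\})$ is locally constant away from $0$, the product $\cA_p\eta_p^-$ is again locally constant (it is identically zero near $0$), compactly supported, and vanishes at the origin, so it belongs to $\Psi(\bQ_p)$. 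For the Archimedean factor, $\eta_\infty^-\in\Psi(\bR)$ is Schwartz and vanishes with all its derivatives at $\xi_\infty=0$; combined with the hypothesis $|\cA_\infty^{(s)}(\xi_\infty)|\le M_s|\xi_\infty|^{-m_s}$ near $0$ and smoothness of $\cA_\infty$ away from the origin, the Leibniz rule gives $\cA_\infty\eta_\infty^-\in\cS(\bR)$ with vanishing of all derivatives at $0$, i.e.\ $\cA_\infty\eta_\infty^-\in\Psi(\bR)$. Assembling, $\cA_{(m)}\eta^-\in\Psi^{[m]}(\bA)$, whence $A_{(m)}\zeta\in\Phi^{[m]}(\bA)\subset\Phi(\bA)$, and surjectivity onto $\Phi(\bA)$ of the class of operators whose symbols include $\cA_{(m)}^{-1}$ (e.g.\ the fractional operators of Proposition \ref{the-abelian-2-ad}) gives the stronger statement of invariance, not just inclusion.

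Finally, for $\Phi'(\bA)$ I would argue by duality from (\ref{64.4-ad}): the conjugate operator $A^T$ has symbol $\overline{\cA_{(m)}(-\xi)}$, which satisfies exactly the same regularity and vanishing hypotheses as $\cA_{(m)}$, so the argument just given shows $A^T_{(m)}\zeta\in\Phi^{[m]}(\bA)$ for $\zeta\in\Phi^{[m]}(\bA)$, with continuity in the inductive-limit topology (\ref{ad-1-test-1-L}). Hence for $f\in\Phi'(\bA)$ the functional $\zeta\mapsto\langle f,A^T_{(m)}\zeta\rangle$ is well defined and continuous on each $\Phi^{[m]}(\bA)$, so $Af\in\Phi'(\bA)$. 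I expect the \emph{main obstacle} to be the Archimedean factor: one must check carefully that multiplication by $\cA_\infty$, smooth but with a prescribed singularity of order $|\xi_\infty|^{-m_s}$ in derivatives at the origin, sends $\Psi(\bR)$ into itself while preserving the full Schwartz decay at infinity; this is where the ``infinite-order vanishing at $0$'' built into $\Psi(\bR)$ does all the work, and it is the only place where Leibniz-type estimates have to be done with care.
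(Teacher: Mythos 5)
Your proposal is correct and follows essentially the same route as the paper: reduce to elementary functions, show that the symbol $\cA_{(m)}$ acts as a multiplier on $\Psi^{[m]}(\bA)$ (which you verify factor-by-factor — the $p$-adic factors via local constancy and vanishing at $0$, the stabilizing factors via $\phi^{(p)}\cdot\phi^{(p)}=\phi^{(p)}$, the Archimedean factor via the infinite-order vanishing in $\Psi(\bR)$ — where the paper simply asserts the multiplier property), and then pass to $\Phi'(\bA)$ by duality through the conjugate operator, exactly as in (\ref{64.4-ad})--(\ref{64.4--1-ad}). The only caveat is your parenthetical claim of surjectivity via symbols containing $\cA_{(m)}^{-1}$: for a general symbol in the stated class the inverse need not belong to the class, and the lemma asserts only $A\Phi(\bA)\subseteq\Phi(\bA)$ and $A\Phi'(\bA)\subseteq\Phi'(\bA)$, so that remark is neither needed nor justified.
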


\begin{proof}
It is sufficient to prove this lemma for elementary functions.
Let $\zeta\in \Phi^{[m]}(\bA)$ be an elementary function. Since
the function $\cA_{(m)}(\xi)$ defined by (\ref{op-ad-1-2}) is a multiplier
in the space $\Psi^{[m]}(\bQ_p)$, $m\in V_\bQ$, both functions $F[\zeta](\xi)$
and $\cA_{(m)}(\xi)F[\zeta](\xi)$ belong to $\Psi^{[m]}(\bA)$ and, consequently,
$A\zeta\in \Phi^{[m]}(\bA)$, $m\in V_\bQ$.
Thus the pseudo-differential operator $A$ (see (\ref{ad-frac-op-pd}), (\ref{62**-1-ad}))
is well defined and the Lizorkin space $\Phi(\bA)$ is invariant under its action.
Therefore, if $f \in \Phi'(\bA)$, then according to (\ref{64.4-ad}), (\ref{64.4--1-ad}),
$Af=(F^{-1}\,\cA\,F)f\in \Phi'(\bA)$, i.e., the Lizorkin space of distributions
$\Phi'(\bA)$ is invariant under $A$.
\end{proof}

The fractional operators (\ref{op-ad-2}) and (\ref{op-ad-2-1}) are particular cases
of operator (\ref{62**-1-ad}).

Similarly to the above constructions, we introduce the corresponding operators on
the Lizorkin spaces $\Phi({\widetilde\bA})$ and $\Phi'({\widetilde\bA})$ :
the pseudo-differential operator
\begin{equation}
\label{62**-1-ad-00}
A_0\zeta=\big(F^{-1}\,\cA_0)\,F\big)\zeta,
\quad \zeta\in \Phi({\widetilde\bA}),
\end{equation}
with the symbol
\begin{equation}
\label{op-ad-1-2-00}
\cA_0(\xi')=\prod_{p\in V_{\bQ}\setminus{\infty}}\cA_{p}(\xi_{p}), \quad
\xi'=(\xi_{2},\dots \xi_{p},\dots) \in {\widetilde\bA};
\end{equation}
the fractional operator
\begin{equation}
\label{op-ad-2-00} \big(D^{\widehat{\gamma}}_0\zeta\big)(x')
=F^{-1}\big[|\cdot|^{\widehat{\gamma}}_0F[\zeta](\cdot)\big](x'),
\quad \zeta \in \Phi({\widetilde\bA}),
\end{equation}
with the symbol
\begin{equation}
\label{op-ad-1-00}
|\xi'|^{\widehat{\gamma}}_0=\prod_{p\in V_{\bQ}\setminus{\infty}}|\xi_p|_p^{\gamma_p},
\quad \xi'=(\xi_{2},\dots \xi_{p},\dots) \in {\widetilde\bA};
\end{equation}
and the fractional operator
\begin{equation}
\label{op-ad-2-1-00} \big(D^{\gamma}_0\zeta\big)(x')
=F^{-1}\big[|\cdot|^{\gamma}_0F[\zeta](\cdot)\big](x'), \quad \zeta
\in \Phi({\widetilde\bA}),
\end{equation}
with the symbol
\begin{equation}
\label{op-ad-1-1-00-00}
|\xi'|^{\gamma}_0=\prod_{p\in V_{\bQ}\setminus{\infty}}|\xi_p|_p^{\gamma},
\quad \xi'=(\xi_{2},\dots \xi_{p},\dots) \in {\widetilde\bA}.
\end{equation}

\subsection{On the eigenfunctions of adelic pseudo-differential operators.}\label{s9.4}

\begin{Theorem}
\label{th-o4.2-ad}
Let $A_0$ be a pseudo-differential operator {\rm(\ref{62**-1-ad-00})}
with symbol {\rm(\ref{op-ad-1-2-00})}, and
$$
\widetilde{\Psi}_{\alpha}(x')=\widetilde{\Psi}_{(\widehat{k}; \widehat{j}\ \widehat{a})}(x')
=\bigotimes\limits_{2\le q\leq m}\psi_{\alpha_{q}}^{(q)}(x_q)
\otimes\bigotimes\limits_{m<q}\phi^{(q)}(x_q), \quad \alpha\in \widetilde{\Lambda},
\quad x'\in {\widetilde\bA},
$$
be wavelet functions {\rm(\ref{1-ad+=11-00})} which do not contain factors
$\psi_{\alpha_{q}}^{(q)}(x_{q})=\phi^{(q)}(x_q-a_q)$, $a_p \in I_p$,
{\rm(}see {\rm(\ref{basis-phi-inQ}), (\ref{basis-phi-inZ}))} for $q\le m$;
$\widetilde{\Lambda}$ is the indexes set {\rm(\ref{A_(p,m)-phi-00})}.
Then $\widetilde{\Psi}_{\alpha}$ is an eigenfunction of the operator
$A_0$ if and only if
\begin{equation}
\label{64***-ad}
\prod_{q\in\{2,3,\dots,m\}}\cA_{q}\big(q^{-j_q}(-q^{-1}k_q+\eta_q)\big)
=\prod_{q\in\{2,3,\dots,m\}}\cA_{q}\big(-q^{-j_q-1}k_q\big), \quad j_q\in \bZ_{+},
\end{equation}
holds for all $\eta_q \in \bZ_q$, $q=2,3,\dots,m$.
The corresponding eigenvalue is the following
$\lambda=\prod_{q\in \{2,3,\dots,m\}}\cA_{q}\big(-q^{j_q-1}k_q\big)$, i.e.,
$$
A_0\big|_{\Phi^{[m]}({\widetilde\bA})}\widetilde{\Psi}_{\alpha}(x')
\stackrel{def}{=}A_{0 (m)}\widetilde{\Psi}_{\alpha}(x')
\qquad\qquad\qquad\qquad\qquad\qquad\qquad\qquad\quad
$$
$$
\qquad\qquad\qquad\qquad\quad
=\Big(\prod_{q\in\{2,3,\dots,m\}}\cA_{q}\big(-q^{j_q-1}k_q\big)\Big)
\widetilde{\Psi}_{\alpha}(x'), \quad \alpha\in \widetilde{\Lambda},
\quad x'\in {\widetilde\bA}.
$$
\end{Theorem}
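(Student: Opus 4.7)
The plan is to diagonalize $A_0$ on the Fourier side and then read off a pointwise condition on each $\cA_q$. Since $A_0|_{\Phi^{[m]}({\widetilde\bA})}=A_{0,(m)}=F^{-1}\,\cA_{0,(m)}\,F$, the eigenequation $A_{0,(m)}\widetilde{\Psi}_{\alpha}=\lambda\widetilde{\Psi}_{\alpha}$ is equivalent, after applying $F$, to
\[
\cA_{0,(m)}(\xi')\,F[\widetilde{\Psi}_{\alpha}](\xi')=\lambda\,F[\widetilde{\Psi}_{\alpha}](\xi'),\qquad \xi'\in\widetilde{\bA}.
\]
So the first step is to compute $F[\widetilde{\Psi}_{\alpha}]$, and then to understand what it means for $\cA_{0,(m)}$ to act as a scalar on it.

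For the factors with $q>m$, formula~(\ref{14.2}) gives $F[\phi^{(q)}](\xi_q)=\phi^{(q)}(\xi_q)=\Omega(|\xi_q|_q)$, and since $\Omega(|\cdot|_q)$ is idempotent under pointwise multiplication, the corresponding factor in the symbol~(\ref{op-ad-1-2}) of $\cA_{0,(m)}$ (which is precisely $\phi^{(q)}(\xi_q)$) reproduces $\phi^{(q)}(\xi_q)$ itself and contributes no eigenvalue factor. For the factors with $2\le q\le m$, formula~(\ref{o-64.8*-ad}) gives
\[
F[\psi^{(q)}_{k_q;j_qa_q}](\xi_q)=q^{-j_q/2}\chi_q\big(q^{j_q}a_q\xi_q\big)\,\Omega\Big(\Big|\tfrac{k_q}{q}+q^{j_q}\xi_q\Big|_q\Big),
\]
whose support is exactly the set
\[
S_{q;k_q,j_q}=\{\xi_q\in\bQ_q:q^{j_q}\xi_q+\tfrac{k_q}{q}\in\bZ_q\}=\{q^{-j_q}(-q^{-1}k_q+\eta_q):\eta_q\in\bZ_q\}.
\]
Thus $F[\widetilde{\Psi}_{\alpha}]$ is supported in $\prod_{2\le q\le m}S_{q;k_q,j_q}\times\prod_{q>m}\bZ_q$, and is nowhere zero on this set (the exponentials are unimodular and $\Omega=1$ there).

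The eigenequation is now equivalent to the statement that the product $\prod_{2\le q\le m}\cA_q(\xi_q)$, when restricted to $\prod_{2\le q\le m}S_{q;k_q,j_q}$, is constant and equal to $\lambda$. Parametrising $\xi_q=q^{-j_q}(-q^{-1}k_q+\eta_q)$ for $\eta_q\in\bZ_q$ and taking $\eta_q=0$ to fix the value gives $\lambda=\prod_{2\le q\le m}\cA_q(-q^{-j_q-1}k_q)$, and constancy along the whole support becomes precisely condition~(\ref{64***-ad}). Conversely, if~(\ref{64***-ad}) holds for all $\eta_q\in\bZ_q$, the symbol $\cA_{0,(m)}$ is scalar on the support of $F[\widetilde{\Psi}_{\alpha}]$, so $\widetilde{\Psi}_{\alpha}$ is an eigenfunction with that eigenvalue.

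The routine part is the direct calculation of $F[\psi^{(q)}_{k_q;j_qa_q}]$ and the supports, which is already recorded in~(\ref{o-64.8*-ad}). The step requiring care is the handling of the infinitely many factors with $q>m$: one must check that multiplication by the stabilizing factors $\phi^{(q)}(\xi_q)$ in the symbol $\cA_{0,(m)}$ is genuinely innocuous on $F[\widetilde{\Psi}_{\alpha}]$, so that the eigenvalue depends only on the finitely many indices $q\le m$; this relies on the idempotency $\phi^{(q)}\cdot\phi^{(q)}=\phi^{(q)}$ and on $F$ mapping $\phi^{(q)}$ to itself. The only subtle point in the ``only if'' direction is passing from equality of products to the pointwise identity~(\ref{64***-ad}) for each tuple $(\eta_q)$: since we assert the product equality for all tuples simultaneously, this poses no real difficulty, and the equivalence follows by specialising the $\eta_{q'}$ variables one at a time.
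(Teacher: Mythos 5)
Your proposal is correct and follows essentially the same route as the paper: pass to the Fourier side using (\ref{o-64.8*-ad}) (and (\ref{o-64.8*-ad-11*}) for the factors restricted to $\bZ_q$, which coincide with (\ref{o-64.8*-ad}) on basis elements since $q^{j_q}a_q\in\bZ_q$), note that $F[\widetilde{\Psi}_{\alpha}]$ is nonvanishing on its support $\prod_{2\le q\le m}q^{-j_q}\bigl(-q^{-1}k_q+\bZ_q\bigr)\times\prod_{q>m}\bZ_q$, and conclude that the eigenequation is exactly constancy of the symbol on that set, i.e.\ condition (\ref{64***-ad}) with $\lambda=\prod_{q}\cA_q(-q^{-j_q-1}k_q)$. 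The paper carries out the ``if'' direction by an explicit change of variables and inverse Fourier transform and the ``only if'' direction by the same support argument you use, so the content is identical.
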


\begin{proof}
In view of (\ref{op-ad-1-2-00}) and (\ref{14.2}), we have
$$
A_0\big|_{\Phi^{[m]}({\widetilde\bA})}\widetilde{\Psi}_{\alpha}(x')\stackrel{def}{=}A_{0 (m)}\widetilde{\Psi}_{\alpha}(x')
=F^{-1}\big[\cA_{0 (m)}(\xi')F[\widetilde{\Psi}_{\alpha}(\xi')\big]\big](x')
\qquad\qquad
$$
$$
\qquad\qquad\quad
=\prod_{2\le q\leq p}F^{-1}\big[\cA_{q}F[\psi_{\alpha_{q}}^{(q)}\big]\big](x_q)
\prod_{p<q\leq m}F^{-1}\big[\cA_{q}F[\psi_{\alpha_{q}}^{(q)}\big]\big](x_q)
\prod_{q>m}\phi^{(q)}(x_q),
$$
where
$\psi_{\alpha_{q}}^{(q)}(x_{q})=\psi^{(q)}_{k_q;\, j_q a_q}(x_{q})
=q^{j_q/2}\chi_q\big(\frac{k_q}{q}(q^{-j_q}x_q-a_q)\big)\Omega\big(|q^{-j_q}x_q-a_q|_q\big)$
(see (\ref{basis-phi-inQ}), (\ref{basis-phi-inZ})), and $x_q\in \bQ_q$, $2\le q\leq p$;
$x_q\in \bZ_q$, $p<q\leq m$.

If condition (\ref{64***-ad}) holds, then using formulas (\ref{o-64.8*-ad}),
(\ref{o-64.8*-ad-11*}), we obtain
$$
A_0\big|_{\Phi^{[m]}({\widetilde\bA})}\widetilde{\Psi}_{\alpha}(x')=A_{0 (m)}\widetilde{\Psi}_{\alpha}(x')
\qquad\qquad\qquad\qquad\qquad\qquad\qquad\qquad\qquad
$$
$$
=\prod_{2\le q\leq p}q^{-j_q/2}F^{-1}\Big[\cA_{q}(\xi_q)
\chi_q\big(q^{j_q}a_q\xi_q\big)\Omega\Big(\Big|\frac{k_q}{q}+q^{j_q}\xi_q\Big|_q\Big)\Big](x_q)
\times\qquad\qquad
$$
$$
\times
\prod_{p<q\leq m}q^{-j_q/2}F^{-1}\Big[\cA_{q}(\xi_q)
\Omega(|q^{j_q}a_q|_q)\chi_q\big(q^{j_q}a_q\xi_q\big)
\Omega\Big(\Big|\frac{k_q}{q}+q^{j_q}\xi_q\Big|_q\Big)\Big](x_q)\times
$$
$$
\qquad\qquad\qquad\qquad\qquad\qquad\qquad\qquad\qquad\qquad\qquad\qquad\quad
\times
\prod_{q>m}\phi^{(q)}(x_q).
$$

Making the change of variables $\xi_q=q^{-j_q}(\eta_q-q^{-1}k_q)$ and using
formulas (\ref{alg-53A-test}), (\ref{14.2}), we have for $2\le q\leq p$
$$
q^{-j_q/2}F^{-1}\Big[\cA_{q}(\xi_q)
\chi_q\big(q^{j_q}a_q\xi_q\big)\Omega\Big(\Big|\frac{k_q}{q}+q^{j_q}\xi_q\Big|_q\Big)\Big](x_q)
\qquad\qquad\qquad\qquad\qquad
$$
$$
=\cA_{q}\big(-q^{j_q-1}k_q\big)
q^{j_q/2}\int_{\bQ_q}\chi_{q}\big((-q^{-1}k_q+\eta_q)
(-q^{-j_q}x_q+a_q)\big)\Omega(|\eta_q|_q)\,d\eta_q
$$
$$
\qquad\qquad\qquad\qquad\qquad\qquad\qquad\quad
=\cA_{q}\big(-q^{j_q-1}k_q\big)\psi_{\alpha_{q}}^{(q)}(x_q),
\quad x_q\in \bQ_q.
$$

In the same way, we obtain for $p<q\leq m$
$$
q^{-j_q/2}F^{-1}\Big[\cA_{q}(\xi_q)
\Omega(|q^{j_q}a_q|_q)\chi_q\big(q^{j_q}a_q\xi_q\big)
\Omega\Big(\Big|\frac{k_q}{q}+q^{j_q}\xi_q\Big|_q\Big)\Big](x_q)
\qquad\qquad\quad
$$
$$
=\cA_{q}\big(-q^{j_q-1}k_q\big)\Omega(|q^{j_q}a_q|_q)\psi_{\alpha_{q}}^{(q)}(x_q)
\qquad\qquad\qquad\qquad\qquad\qquad\qquad
$$
$$
\qquad
=\cA_{q}\big(-q^{j_q-1}k_q\big)\Omega(|q^{j_q}a_q|_q)
q^{j_q/2}\chi_q\Big(\frac{k_q}{q}(q^{-j_q}x_q-a_q)\Big)\Omega\big(|q^{-j_q}x_q-a_q|_q\big).
$$
The latter expression is nonzero only if $q^{j_q}a_q\in \bZ_q$ and $|x_q-q^{j_q}a_q|_q\le q^{-j_q}$.
This implies that $x_q\in \bZ_q$. Thus,
$$
q^{-j_q/2}F^{-1}\Big[\cA_{q}(\xi_q)
\Omega(|q^{j_q}a_q|_q)\chi_q\big(q^{j_q}a_q\xi_q\big)
\Omega\Big(\Big|\frac{k_q}{q}+q^{j_q}\xi_q\Big|_q\Big)\Big](x_q)
$$
$$
\qquad\qquad\qquad\qquad\qquad\qquad
=\cA_{q}\big(-q^{j_q-1}k_q\big)\psi_{\alpha_{q}}^{(q)}(x_q), \quad x_q\in \bZ_q.
$$

Consequently, $A_0\widetilde{\Psi}_{\alpha}(x')=A_{0 (m)}\widetilde{\Psi}_{\alpha}(x')
=\lambda \widetilde{\Psi}_{\alpha}(x')$,
where
$$
\lambda=\prod_{q\in\{2,3,\dots,m\}}\cA_{q}\big(-q^{j_q-1}k_q\big), \quad x'\in {\widetilde\bA}.
$$
Conversely, if $A_0\big|_{\Phi^{[m]}({\widetilde\bA})}\widetilde{\Psi}_{\alpha}(x')
=A_{0 (m)}\widetilde{\Psi}_{\alpha}(x')=\lambda \widetilde{\Psi}_{\alpha}(x')$, then
taking the Fourier transform of both left- and right-hand sides of this identity and
using formulas (\ref{o-64.8*-ad}), (\ref{o-64.8*-ad-11*}), we obtain
$$
\big(\cA_0(\xi')-\lambda\big)\prod_{2\le q\leq
m}\Omega\Big(\Big|\frac{k_q}{q}+q^{j_q}\xi_q\Big|_q\Big)=0, \quad
\xi'\in {\widetilde\bA}.
$$
If now $\frac{k_q}{q}+q^{j_q}\xi_q=\eta_q$, $\eta_q \in \bZ_q$, then
$\xi_q=\big(\eta_q-\frac{k_q}{q}\big)q^{-j_q}$ for $2\le q\leq m$.
Thus,
$$
\lambda=\cA_{0 (m)}\Big(\big(\eta_2-2^{-1}k_2\big)2^{-j_2},\dots,\big(\eta_m-m^{-1}k_m\big)m^{-j_m}\Big),
\quad \forall \, \eta_q\in \bZ_q, \,\, 2\le q\leq m.
$$
In particular, $\lambda=\cA_{0 (m)}\big(2^{-j_2-1}k_2,\dots,m^{-j_m-1}k_m\big)$ and
consequently (\ref{64***-ad}) holds.
\end{proof}

According to (\ref{op-ad-2-00}), (\ref{op-ad-1-00}), the adelic
fractional operator $D^{\widehat{\gamma}}_{0 (m)}$ has the symbol
$\cA_{0 (m)}(\xi')=|\xi'|^{\widehat{\gamma}}_{0 (m)}=\prod_{q\in\{2,3,\dots,m\}}|\xi_q|_p^{\gamma_q}\prod_{q>m}\phi^{(q)}(\xi_q)$,
\, $\xi'=(\xi_{2},\dots \xi_{p},\dots) \in {\widetilde\bA}$.
It is easy to see that the symbol $\cA_{0 (m)}(\xi')=|\xi'|^{\widehat{\gamma}}_{0 (m)}$
satisfies condition (\ref{64***-ad}):
$$
\prod_{q\in\{2,3,\dots,m\}}\cA_{q}\big(q^{-j_q}(-q^{-1}k_q+\eta_q)\big)
=\prod_{q\in\{2,3,\dots,m\}}\Big|q^{-j_q}\big(-q^{-1}k_q+\eta_q\big)\Big|_q^{\gamma_q}
$$
$$
=\prod_{q\in\{2,3,\dots,m\}}q^{j_q}\big|-q^{-1}k_q+\eta_q\big|_q^{\gamma_q}
=\prod_{q\in\{2,3,\dots,m\}}q^{\gamma_q(j_q+1)}
\qquad
$$
$$
\qquad\qquad\qquad
=\prod_{q\in\{2,3,\dots,m\}}\cA_{q}\big(-q^{-j_q-1}k_q\big),
\quad \forall \, \eta_q\in \bZ_q, \quad 2\le q\leq m.
$$

Thus according to Theorem~\ref{th-o4.2-ad}, we have

\begin{Corollary}
\label{o=cor5-ad}
The wavelet function {\rm(\ref{1-ad+=11-00})}
$$
\widetilde{\Psi}_{\alpha}(x')=\widetilde{\Psi}_{(\widehat{k}; \widehat{j}\ \widehat{a})}(x')
=\bigotimes\limits_{2\le q\leq m}\psi_{\alpha_{q}}^{(q)}(x_q)\otimes\bigotimes\limits_{m<q}\phi^{(q)}(x_q),
\quad x'\in{\widetilde\bA},
\quad \alpha\in\tilde\Lambda=\bigcup_{m\in V_{\bQ}\setminus\infty}\tilde{\Lambda}_{m}.
$$
which does not contain the factors
$\psi_{\alpha_{q}}^{(q)}(x_{q})=\phi^{(q)}(x_q-a_q)$, $a_p \in I_p$ for $q\le m$,
is an eigenfunction of the adelic fractional operator $D^{\widehat{\gamma}}_0$:
\begin{equation}
\label{eq-66.1-ad}
D^{\widehat{\gamma}}_0\big|_{\Phi^{[m]}({\widetilde\bA})}\widetilde{\Psi}_{\alpha}(x')
=D^{\widehat{\gamma}}_{0 (m)}\widetilde{\Psi}_{\alpha}(x')
=\Big(\prod_{q\in\{2,3,\dots,m\}}q^{\gamma_q(j_q+1)}\Big)\widetilde{\Psi}_{\alpha}(x'),
\quad x'\in {\widetilde\bA}.
\end{equation}
\end{Corollary}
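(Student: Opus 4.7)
The plan is to apply Theorem~\ref{th-o4.2-ad} directly to the operator $D^{\widehat{\gamma}}_0$ by verifying its hypothesis~(\ref{64***-ad}) for the appropriate symbol. Comparing (\ref{op-ad-2-00})--(\ref{op-ad-1-00}) with (\ref{62**-1-ad-00})--(\ref{op-ad-1-2-00}), one sees that $D^{\widehat{\gamma}}_0$ is a pseudo-differential operator of the type treated in Theorem~\ref{th-o4.2-ad}, with factors $\cA_q(\xi_q)=|\xi_q|_q^{\gamma_q}$ for $q\in\{2,3,\dots,m\}$ and $\cA_q(\xi_q)=\phi^{(q)}(\xi_q)$ for $q>m$. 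Thus once (\ref{64***-ad}) is established, the conclusion, including the identification of the eigenvalue, is immediate from the theorem.

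The key computation is a one-line ultrametric estimate. Fix $q\in\{2,3,\dots,m\}$, $k_q\in\bF_q^{\times}$, $j_q\in\bZ_{+}$ and $\eta_q\in\bZ_q$. Since $k_q\in\{1,2,\dots,q-1\}$ one has $|q^{-1}k_q|_q=q>1\ge |\eta_q|_q$, so by the strong triangle inequality
\begin{equation*}
\big|-q^{-1}k_q+\eta_q\big|_q=\big|q^{-1}k_q\big|_q=q.
\end{equation*}
Consequently
\begin{equation*}
\cA_q\!\big(q^{-j_q}(-q^{-1}k_q+\eta_q)\big)=q^{j_q\gamma_q}\big|-q^{-1}k_q+\eta_q\big|_q^{\gamma_q}=q^{(j_q+1)\gamma_q},
\end{equation*}
and the same value $q^{(j_q+1)\gamma_q}=|q^{-j_q-1}k_q|_q^{\gamma_q}=\cA_q(-q^{-j_q-1}k_q)$ is obtained from the right-hand side of (\ref{64***-ad}) by taking $\eta_q=0$. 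Taking the product over $q\in\{2,3,\dots,m\}$ gives condition (\ref{64***-ad}).

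Theorem~\ref{th-o4.2-ad} then yields $D^{\widehat{\gamma}}_0\big|_{\Phi^{[m]}({\widetilde\bA})}\widetilde{\Psi}_{\alpha}=\lambda\widetilde{\Psi}_{\alpha}$, where the eigenvalue is, by the computation above,
\begin{equation*}
\lambda=\prod_{q\in\{2,3,\dots,m\}}\cA_q(-q^{-j_q-1}k_q)=\prod_{q\in\{2,3,\dots,m\}}q^{\gamma_q(j_q+1)},
\end{equation*}
which is exactly (\ref{eq-66.1-ad}). The hypothesis on $\widetilde{\Psi}_\alpha$ (no factor $\phi^{(q)}(x_q-a_q)$ for $q\le m$) is needed solely to invoke Theorem~\ref{th-o4.2-ad}. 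There is no real obstacle here; the only point worth flagging is the ultrametric identity $|-q^{-1}k_q+\eta_q|_q=q$, which relies on the fact that $k_q\ne 0$ in $\bF_q$ so that $q^{-1}k_q$ has strictly larger $q$-adic size than any element of $\bZ_q$, making the calculation work uniformly in $\eta_q$.
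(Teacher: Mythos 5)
Your proposal is correct and follows essentially the same route as the paper: immediately before the corollary the authors verify condition (\ref{64***-ad}) for the symbol $|\xi'|^{\widehat{\gamma}}_{0(m)}$ via the same ultrametric computation $\big|-q^{-1}k_q+\eta_q\big|_q=q$ for $k_q\in\bF_q^{\times}$, $\eta_q\in\bZ_q$, and then invoke Theorem~\ref{th-o4.2-ad} to read off the eigenvalue $\prod_{q}q^{\gamma_q(j_q+1)}$. Your identification of the eigenvalue as $\prod_q\cA_q(-q^{-j_q-1}k_q)$ matches the intended value in the theorem (whose statement contains a sign typo, writing $-q^{j_q-1}k_q$), so no gap remains.
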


\begin{Corollary}
\label{o=cor5-ad-1}
The wavelet function {\rm(\ref{1-ad+=11-00})}
$$
\widetilde{\Psi}_{\alpha}(x')=\widetilde{\Psi}_{(\widehat{k}; \widehat{j}\ \widehat{a})}(x')
=\bigotimes\limits_{2\le q\leq m}\psi_{\alpha_{q}}^{(q)}(x_q)
\otimes\bigotimes\limits_{m<q}\phi^{(q)}(x_q), \quad x'\in {\widetilde\bA},
\quad\alpha\in \tilde\Lambda=\bigcup_{m\in V_{\bQ}\setminus\infty}\tilde{\Lambda}_{m}.
$$
which does not contain the factors
$\psi_{\alpha_{q}}^{(q)}(x_{q})=\phi^{(q)}(x_q-a_q)$, $a_p \in I_p$, for $q\le m$,
is an eigenfunction of the adelic fractional operator $D^{\gamma}_0$:
\begin{equation}
\label{eq-66.1-ad-1}
D^{\gamma}_0\big|_{\Phi^{[m]}({\widetilde\bA})}\widetilde{\Psi}_{\alpha}(x')
=D^{\gamma}_{0 (m)}\widetilde{\Psi}_{\alpha}(x')
=\Big(\prod_{q\in\{2,3,\dots,m\}}q^{j_q+1}\Big)^{\gamma}\widetilde{\Psi}_{\alpha}(x'),
\quad x'\in {\widetilde\bA}.
\end{equation}
\end{Corollary}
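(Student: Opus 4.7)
The plan is to deduce Corollary~\ref{o=cor5-ad-1} as an immediate specialization of the preceding Corollary~\ref{o=cor5-ad}. Indeed, comparing the definitions (\ref{ad-frac-op})/(\ref{op-ad-1}) with (\ref{ad-frac-op-2})/(\ref{op-ad-1-1}), one sees that the operator $D^{\gamma}_0$ on $\Phi({\widetilde\bA})$ is precisely the operator $D^{\widehat{\gamma}}_0$ corresponding to the constant exponent multi-index $\widehat{\gamma} = (\gamma, \gamma, \dots, \gamma, \dots)\in \bC^{\infty}$ with $\gamma_q = \gamma$ for every $q\in V_{\bQ}\setminus\infty$. So I would first record this identification of symbols for the restricted operators $D^{\gamma}_{0(m)}$ and $D^{\widehat{\gamma}}_{0(m)}$ on $\Phi^{[m]}({\widetilde\bA})$.

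Next, I would apply Corollary~\ref{o=cor5-ad} to the wavelet function $\widetilde{\Psi}_{\alpha}$ (which, by hypothesis, contains no factor of the form $\phi^{(q)}(x_q-a_q)$ for $q\le m$, so the hypothesis of Corollary~\ref{o=cor5-ad} is met). This gives
\[
D^{\gamma}_{0(m)}\widetilde{\Psi}_{\alpha}(x')
= \Big(\prod_{q\in\{2,3,\dots,m\}} q^{\gamma_q(j_q+1)}\Big)\widetilde{\Psi}_{\alpha}(x')
= \Big(\prod_{q\in\{2,3,\dots,m\}} q^{\gamma(j_q+1)}\Big)\widetilde{\Psi}_{\alpha}(x'),
\]
and the claimed eigenvalue $\bigl(\prod_{q\in\{2,\dots,m\}} q^{j_q+1}\bigr)^{\gamma}$ is obtained by pulling the constant exponent $\gamma$ outside the finite product.

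Alternatively, one can verify the criterion (\ref{64***-ad}) in Theorem~\ref{th-o4.2-ad} directly with $\cA_q(\xi_q)=|\xi_q|_q^{\gamma}$: since $|-q^{-1}k_q + \eta_q|_q = q$ for all $\eta_q\in\bZ_q$ (as $k_q\in\bF_q^{\times}$), one has $|q^{-j_q}(-q^{-1}k_q+\eta_q)|_q^{\gamma}=q^{\gamma(j_q+1)}=|-q^{-j_q-1}k_q|_q^{\gamma}$, so condition (\ref{64***-ad}) holds independently of $\eta_q\in\bZ_q$, and the eigenvalue is $\prod_{q}q^{\gamma(j_q+1)}$ as claimed.

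Since the reduction is essentially a notational observation and both routes are short, there is no real obstacle; the only thing to be careful about is formally recording the specialization $\widehat{\gamma}\mapsto (\gamma,\gamma,\dots)$ at the level of the symbols (\ref{op-ad-1-00})~$\to$~(\ref{op-ad-1-1-00-00}) so that the application of Corollary~\ref{o=cor5-ad} is unambiguous on the subspace $\Phi^{[m]}({\widetilde\bA})$.
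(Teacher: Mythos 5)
Your proposal is correct and matches the paper's own treatment: the paper defines $D^{\gamma}$ precisely as $D^{\widehat{\gamma}}$ with the constant multi-index $\widehat{\gamma}=(\gamma,\gamma,\dots)$, and the corollary is obtained exactly as you do, by specializing Corollary~\ref{o=cor5-ad} (equivalently, by checking condition (\ref{64***-ad}) for the symbol $|\xi_q|_q^{\gamma}$, using $|-q^{-1}k_q+\eta_q|_q=q$) and factoring out the constant exponent from the finite product. No gap; both of your routes coincide with the paper's argument.
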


\begin{center}
{\bf Acknowledgements}
\end{center}

The authors are greatly indebted to A.N.~Kochubei,
V.I.~Polischook, O.G.~Smolyanov for fruitful discussions.

\end{document}